\documentclass[a4paper]{article}
\usepackage[left=3cm,right=3cm,top=2.5cm,bottom=2.5cm]{geometry}
\usepackage{amsmath,amsthm,amssymb,bm}
\usepackage{graphicx,subfigure} 
\usepackage{authblk}  
\usepackage{cite}     
\usepackage{booktabs} 
\usepackage{algorithm}
\usepackage{enumitem} 
\usepackage{rotating} 
\usepackage{color}
\newtheorem{mytheorem}{Theorem}[section] 
\newtheorem{mylemma}{Lemma}[section]
\newtheorem{mycorollary}{Corollary}[section]
\theoremstyle{definition}

\newtheorem{myremark}{Remark}[section]
\numberwithin{equation}{section}  %
\numberwithin{figure}{section}    %
\numberwithin{table}{section}     %
\numberwithin{algorithm}{section}
\newcommand{\myvec}[1]{\textup{\textbf{#1}}}

\newcommand{\RS}{\mathbb{R}}
\newcommand{\NS}{\mathbb{N}}
\newcommand{\diag}{\mathrm{diag}\,} 
\newcommand{\ball}{\bm{\mathrm{B}}} 
\newcommand{\mylim}{\lim\limits}
\newcommand{\mysum}{\sum\limits}
\newcommand{\range}{\mathrm{range}}
\begin{document}

\title{On the convergence of two-step modified Newton method for 
  nonsymmetric algebraic Riccati equations from transport theory}

\author{
    Juan Liang and
    Yonghui Ling\thanks{Corresponding author. \textit{E-mail address:} \texttt{yhling@mnnu.edu.cn}}
    \and
    Department of Mathematics, Minnan Normal University, Zhangzhou 363000, China
    }

\maketitle
\begin{abstract}
  This paper is concerned with the convergence of a two-step modified Newton method for 
  solving the nonlinear system arising from the minimal nonnegative solution of
  nonsymmetric algebraic Riccati equations from neutron transport theory.
  We show the monotonic convergence of the two-step modified Newton method under mild assumptions.
  When the Jacobian of the nonlinear operator at the minimal positive solution is singular,
  we present a convergence analysis of the two-step modified Newton method in this context.
  Numerical experiments are conducted to demonstrate that the proposed method 
  yields comparable results to several existing Newton-type methods
  and that it brings a significant reduction in computation time for nearly singular and large-scale problems.

  \textbf{Keywords:} Nonsymmetric algebraic Riccati equation, minimal positive solution, 
  two-step modified Newton method, monotone convergence, singular problems
\end{abstract}
\section{Introduction}
\label{sec:Introduction}

Our aim in this paper is to study effective solutions of 
nonsymmetric algebraic Riccati equation (NARE) from neutron transport theory as follows form:
\begin{equation}
  \label{eq:NARE}
  XCX - XD - AX + B = 0,
\end{equation}
where $X \in \RS^{n \times n}$ is an unknown matrix, 
and $A, B, C, D \in \RS^{n \times n}$ are known matrices given by 
\begin{equation}
  \label{mat:ABCD}
  A = \Delta - \myvec{e}\myvec{q}^{\top}, \quad
  B = \myvec{e}\myvec{e}^{\top}, \quad
  C = \myvec{q}\myvec{q}^{\top}, \quad
  D = \Gamma - \myvec{q}\myvec{e}^{\top},
\end{equation}
with
$$
\left\{\begin{aligned}
  \Delta &= \diag(\delta_1,\delta_2,\ldots,\delta_n), & \delta_i &= \frac{1}{c\omega_i(1+\alpha)} > 0, \\
  \Gamma &= \diag(\gamma_1,\gamma_2,\ldots,\gamma_n), & \gamma_i &= \frac{1}{c\omega_i(1-\alpha)} > 0, \\
  \myvec{q} &= (q_1,q_2,\ldots,q_n)^{\top}, & q_i &= \frac{c_i}{2\omega_i} > 0, \\
  \myvec{e} &= (1,1,\ldots,1)^{\top}.
\end{aligned}\right.
$$
The matrices and vectors above depend on the two parameters
\begin{equation}
  \label{cons:alpha_c}
  c \in (0,1] \quad \text{and} \quad \alpha \in [0,1).
\end{equation}
Moreover, $\{\omega_i\}_{i=1}^n$ and $\{c_i\}_{i=1}^n$ are the sets of the Gauss-Legendre nodes and weights, 
respectively, on the interval $[0,1]$, and satisfy 
$$
0 < \omega_n < \cdots < \omega_2 < \omega_1 < 1 \ \text{and} \ \sum_{i=1}^n c_i = 1 \ \text{with} \ c_i > 0.
$$
Clearly, $\{\delta_i\}_{i=1}^n$ and $\{\gamma_i\}_{i=1}^n$ are
strictly monotonically increasing, and
$$
\begin{cases}
\delta_i = \gamma_i, & \mbox{when $\alpha = 0$},\\
\delta_i \neq \gamma_i, & \mbox{when $\alpha \neq 0$},
\end{cases}
\quad i = 1,2,\ldots,n.
$$
The NARE \eqref{eq:NARE} is obtained by a discretization of an integrodifferential equation
describing neutron transport during a collision process.
The solution of interest from a physical perspective is the minimal nonnegative solution 
\cite{Juang1995,JuangL1998,Guo2001,BiniIM2012}.

Most of developed numerical methods in the last two decades for solving NARE \eqref{eq:NARE} 
fall into one of three categories: Newton-type methods \cite{GuoL2000b,Guo2001,BiniIP2008,LinBW2008,LinB2008,HuangKH2010,LinBJ2010,BentbibJS2015}, 
fixed-point methods \cite{Lu2005a,BaoLW2006,BaiGL2008,Lin2008,GuoL2010,LinBW2011,HuangM2014,HuangM2020,SunGW2023}
and the structure-preserving doubling methods \cite{GuoIM2008,LiCKL2013,GuoG2014,Guo2016}.
In the present paper, we are concerned with the algorithms based on Newton-type iterations.
Lu \cite{Lu2005a} first proved that the solution of \eqref{eq:NARE} must have the following form:
$$
X = T \circ (\myvec{u} \myvec{v}^{\top}) = (\myvec{u} \myvec{v}^{\top}) \circ T,
$$
where $\circ$ denotes the Hadamard product,
$T = (t_{ij})_{n \times n} = \left(\frac{1}{\delta_i + \gamma_j}\right)_{n\times n}$,
$\myvec{u}$ and $\myvec{v}$ are vectors satisfying
\begin{equation}
\label{eq:VecEq_uv}
\left\{
\begin{aligned}
\myvec{u} &= \myvec{u} \circ (P\myvec{v}) + \myvec{e},\\
\myvec{v} &= \myvec{v} \circ (\widetilde{P}\myvec{u}) + \myvec{e},
\end{aligned}
\right.
\end{equation}
with
\begin{equation}
\label{mat:P_Ptilde}
  P
    = (p_{ij})_{n\times n}
    = \left(\frac{q_j}{\delta_i + \gamma_j}\right)_{n\times n}, \quad 
  \widetilde{P}
    = (\widetilde{p}_{ij})_{n\times n}
    = \left(\frac{q_j}{\gamma_i + \delta_j}\right)_{n\times n}.
\end{equation}
We set $\myvec{x} = [\myvec{u}^{\top},\myvec{v}^{\top}]^{\top} \in \RS^{2n}$.
Then the objective of finding the minimal nonnegative solution of \eqref{eq:NARE} is equivalent to
finding solutions for the nonlinear system
\begin{equation}
\label{eq:f(x)=0}
\myvec{f}(\myvec{x}) = 
\myvec{f}(\myvec{u},\myvec{v}) \stackrel{\text{def}}{=}
\begin{bmatrix} 
  \myvec{u} - \myvec{u} \circ (P\myvec{v}) - \myvec{e} \\ 
  \myvec{v} - \myvec{v} \circ (\widetilde{P}\myvec{u}) - \myvec{e}
\end{bmatrix} = \myvec{0},
\end{equation}
where $\myvec{f}: \RS^{2n} \to \RS^{2n}$.
The advantage in representing \eqref{eq:VecEq_uv} as the nonlinear system \eqref{eq:f(x)=0} is that
we now can use the Newton-type methods to solve it.
It is worth highlighting that the Jacobian of $\myvec{f}$ 
at the minimal positive solution $\myvec{x}^* \in \RS^{2n}$ of \eqref{eq:f(x)=0} 
is a singular $M$-matrix if and only if $\alpha = 0$ and $c = 1$.
For further details, refer to \cite{JuangL1998,GuoL2000b}.

Lu \cite{Lu2005b} investigated the monotone convergence of the standard Newton method for solving the nonlinear system \eqref{eq:f(x)=0},
and obtained an iterative algorithm in combination with the fixed-point iteration.
To accelerate the convergence of the Newton method,
Lin et al. \cite{LinBW2008} applied the two-step Newton method 
$$
\left\{\begin{aligned}
  \myvec{y}_k &= \myvec{x}_k + \myvec{f}'(\myvec{x}_{k})^{-1}\myvec{f}(\myvec{x}_k), \\
  \myvec{x}_{k+1} &= \myvec{y}_{k} - \myvec{f}'(\myvec{x}_{k})^{-1}\myvec{f}(\myvec{y}_{k}),
\end{aligned}\right. \quad k = 0,1,2,\ldots
$$
to solve \eqref{eq:f(x)=0}. 
It is worth noting that the scalar form of this method is a special case ($\beta = 1$) of 
one-parameter family of two-step Newton methods with the third order iteration function,
as described in Traub's book \cite[p. 181]{Traub1964}:
$$
\phi(x) = x - \frac{\beta^2-\beta-1}{\beta^2} \frac{g(x)}{g'(x)} - \frac{1}{\beta^2}\frac{g(x+\beta g(x)/g'(x))}{g'(x)}, \quad \beta \neq 0,
$$
and was further rediscovered and studied by Kou et al. \cite{KouLW2006},
where $g: \mathbb{D} \subset \RS \to \RS$ is a continuously differentiable function with $\mathbb{D}$ an open interval.
Subsequently, Ling and Xu \cite{LingX2017} proved the monotone convergence of this one-parameter family of two-step Newton methods.
For $\beta = -1$ in the finite dimensional case, that is, the classical two-step Newton method,
$$
\left\{\begin{aligned}
  \myvec{y}_k &= \myvec{x}_k - \myvec{f}'(\myvec{x}_{k})^{-1}\myvec{f}(\myvec{x}_k), \\
  \myvec{x}_{k+1} &= \myvec{y}_{k} - \myvec{f}'(\myvec{x}_{k})^{-1}\myvec{f}(\myvec{y}_{k}),
\end{aligned}\right. \quad k = 0,1,2,\ldots,
$$
Ling et al. \cite{LingLL2022} performed a semilocal convergence analysis under some mild generalized Lipschitz conditions,
and applied the results to solve the nonlinear system \eqref{eq:f(x)=0}.
Both two-step Newton methods require one evaluation of the Jacobian 
and two evaluations of the function per iteration. 
In comparison, they require one additional function evaluation per iteration than the standard Newton method.
However, they demonstrate faster convergence, 
which may result in improved computational performance in specific nonlinear problems \cite{ChenWS2018,MaC2020,ChenO2024}.

Although many other iterative methods for solving nonlinear operator equations are cubically convergent 
(see for example \cite{LingH2017,LingX2014,EzquerroH2009b} and references therein), 
they typically require a higher computational cost per iteration than Newton's method.
To accelerate the convergence of the Newton method, 
while maintaining the same computational cost per iteration,
a two-step modified Newton method \cite{McDougallW2014} has recently been proposed.
Starting from an initial point $\myvec{x}_0 \in \RS^{2n}$, 
the two-step modified Newton method in multidimensional form is defined iteratively by
\begin{equation}
  \label{it:TSMNM}
  \left\{\begin{aligned}
    \myvec{y}_{k} &= \myvec{x}_k - \myvec{f}'(\myvec{z}_{k-1})^{-1}\myvec{f}(\myvec{x}_k), \\
    \myvec{x}_{k+1} &= \myvec{x}_{k} - \myvec{f}'(\myvec{z}_{k})^{-1}\myvec{f}(\myvec{x}_{k}),
  \end{aligned}\right.\quad k = 0,1,2,\ldots,
\end{equation}
where $\myvec{z}_{-1} = \myvec{x}_0$ and $\myvec{z}_k = (\myvec{x}_k + \myvec{y}_k)/2$ for $k \geq 0$.
We note that at iteration $k$, the two-step modified Newton method requires only one Jacobian evaluation
and one function evaluation, since the Jacobian $\myvec{f}'(\myvec{z}_{k-1})$ has already been evaluated at the previous iteration.
Therefore, the computational cost per iteration of the two-step modified Newton method is comparable to that of Newton's method,
except for the first iteration, which requires an additional Jacobian evaluation.
The first work on the convergence theory for the two-step modified Newton method was developed 
by Potra in \cite{Potra2017}, where a rigorous and comprehensive convergence analysis, 
including both semilocal and local convergence, was provided.
It was shown in \cite{Potra2017} that the two-step modified Newton method exhibits 
locally superquadratic convergence under the assumptions that the derivatives of the function
satisfy the Lipschitz conditions.
Using the majorizing function technique, 
which is extensively used in the convergence analysis of Newton-type methods (see for example \cite{LingLL2022} and references therein), 
Cárdenas et al. \cite{CardenasCS2020,CardenasCS2022} recently established new semilocal convergence 
under some assumptions on the second derivative of the function.

To the best of our knowledge, there are no results available on the convergence of 
the two-step modified Newton method \eqref{it:TSMNM} under the assumption that 
the Jacobian $\myvec{f}'(\myvec{x}^*)$ is singular.
In contrast, 
singular problems for other Newton-type methods have been extensively studied,
including those for Newton's method in references 
\cite{Reddien1978,DeckerK1980a,DeckerK1980b,DeckerKK1983,OberlinW2009,IzmailovKS2018a,IzmailovKS2018b,FischerIS2021}, 
inexact Newton methods in \cite{KelleyX1993,Argyros1999}, 
and quasi-Newton methods in \cite{DeckerK1985,BuhmilerKL2010,Mannel2023}.

Motivated by the potential and advantages of the two-step modified Newton method \eqref{it:TSMNM},
in this paper we investigate its convergence behavior for solving the nonlinear system \eqref{eq:f(x)=0}.
Specifically, we show that the sequence generated by the two-step modified Newton method \eqref{it:TSMNM}
with zero initial guess or some other suitable initial guess is well-defined 
and converges monotonically to the minimal positive solution of the system \eqref{eq:f(x)=0}.
When the Jacobian $\myvec{f}'(\myvec{x}^*)$ is nonsingular (i.e., $\alpha \neq 0$ or $c \neq 1$),
and the convergence criterion given by Potra in \cite{Potra2017} is satisfied,
we can obtain the local quadratic convergence of the two-step modified Newton method \eqref{it:TSMNM}.
For the case when the Jacobian $\myvec{f}'(\myvec{x}^*)$ is singular (i.e., $\alpha = 0$ and $c = 1$),
we consider two classes of assumptions on the singularity of $\myvec{f}'(\myvec{x}^*)$,
and establish the local convergence of the two-step modified Newton method \eqref{it:TSMNM}.
The underlying approach in our convergence analysis is based on a technique for 
approximating the inverse of the derivative near a given point as developed in \cite{DeckerK1980a,DeckerK1980b,DeckerKK1983}.
We implement the two-step modified Newton method \eqref{it:TSMNM} to solve the nonlinear system \eqref{eq:f(x)=0}. 
Our preliminary numerical results exhibit the superiority of the proposed method over other Newton-type methods. 
In particular, the experiments show that the two-step modified Newton method leads to 
a significant reduction in computation time for nearly singular and large-scale problems.

The rest of this paper is organized as follows. 
In Section \ref{sec:Preliminaries}, we present some preliminaries that will be used in the convergence analysis.
We give the iterative algorithm based on the two-step modified Newton method \eqref{it:TSMNM} 
for solving the nonlinear system \eqref{eq:f(x)=0} in Section \ref{sec:TwoStepModifiedNewton}.
In Section \ref{sec:ConvergenceAnalysis}, we analyze the convergence of the two-step modified Newton method \eqref{it:TSMNM}.
Numerical experiments are presented in Section \ref{sec:NumericalExperiments} to illustrate the effectiveness of the proposed algorithm.
Finally, we conclude the paper in Section \ref{sec:Conclusions}.

\section{Preliminaries}
\label{sec:Preliminaries}

Throughout this paper, vectors are columns by default and are denoted by bold lowercase letters, e.g., $\myvec{v}$,
while matrices are denoted by regular uppercase letters, e.g., $V$, which is clear from the context.
We use $\diag(\myvec{v})$ to denote the diagonal matrix with the vector $\myvec{v}$ on its diagonal,
and use $I$ to denote the identity matrix with proper dimension.
If there is potential confusion, we will use $I_n$ to denote the identity matrix of dimension $n$.
The symbol $\myvec{e}_i = (0,\ldots,0,\underset{i}{1},0,\ldots,0)^{\top} \in \RS^n$ is $i$th column of the identity matrix $I_n$.
For any two nonnegative numbers $\mu$ and $\nu$, 
we write $\mu = \mathcal{O}(\nu)$ if there exists a positive constant $M$ such that $\mu \leq M\nu$.

For any real matrices $A = (a_{ij})_{m \times n}$ and $B = (b_{ij})_{m \times n}$,
we write $A \geq B \, (A > B)$ if $a_{ij} \geq b_{ij} \, (a_{ij} > b_{ij})$ for all $i = 1,2,\ldots,m$ and $j = 1,2,\ldots,n$.
We call real matrix $A = (a_{ij})_{m \times n}$ a positive matrix (nonnegative matrix)
if $a_{ij} > 0 \, (a_{ij} \geq 0)$ hold for all $i = 1,2,\ldots,m$ and $j = 1,2,\ldots,n$,
and we write $A > 0 \,(A \geq 0)$.
We denote by $A \circ B = (a_{ij}\cdot b_{ij})_{m \times n}$ the Hadamard product of $A$ and $B$.
Moreover, for any real vectors $\myvec{a} = (a_1,a_2,\ldots,a_n)^{\top}$
and $\myvec{b} = (b_1,b_2,\ldots,b_n)^{\top}$, 
we write $\myvec{a} \geq \myvec{b} \,(\myvec{a} > \myvec{b})$ if $a_i \geq b_i \,(a_i > b_i)$ for all $i = 1,2,\ldots,n$.
The vector of all zero components is denoted by $\myvec{0}$.
If all the components of a vector $\myvec{v} \in \RS^n$ are positive (nonnegative), 
we call $\myvec{v}$ a positive (nonnegative) vector, and we write $\myvec{v} > \myvec{0} \,(\myvec{v} \geq \myvec{0})$.
A vector sequence $\{\myvec{v}_k\}_{k=1}^{\infty} \subset \RS^n$ is called monotonic if $\myvec{v}_{k+1} \geq \myvec{v}_k$ for all $k = 1,2,\ldots$.

A real square matrix $A = (a_{ij})_{n \times n}$ is called a $Z$-matrix if $a_{ij} \leq 0$ for all $i \neq j$.
Any $Z$-matrix $A$ can be written as
$$
 A = sI - B,
$$
where $s \in \RS$ and matrix $B$ is nonnegative.
Furthermore, a $Z$-matrix $A$ is called a nonsingular $M$-matrix if $s > \rho(B)$, 
where $\rho(B)$ is the spectral radius of $B$.
The following lemma, which is taken from \cite[Theorem 2.3 in Chapter 6, p. 137]{BermanP1994}, 
gives some criteria for determining whether the $Z$-matrix is a nonsingular $M$-matrix.

\begin{mylemma}
  \label{lem:NonsingularMmatrix}
For a $Z$-matrix $A \in \RS^{n \times n}$, the following statements are equivalent:
  \begin{enumerate}[label=\textup{(\roman*)}]
    \item $A$ is a nonsingular $M$-matrix.
    \item $A$ is inverse-positive. That is, $A$ is nonsingular and $A^{-1} \geq 0$.
    \item $A$ is semipositive. That is, $A\myvec{v} > \myvec{0}$ holds for some vector $\myvec{v} > \myvec{0}$.
  \end{enumerate}
\end{mylemma}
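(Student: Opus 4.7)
The plan is to establish the three-way equivalence via the cyclic chain of implications (i) $\Rightarrow$ (ii) $\Rightarrow$ (iii) $\Rightarrow$ (i), which is the standard route for results of this shape.

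For (i) $\Rightarrow$ (ii), I would use a Neumann series expansion. Writing $A = s(I - s^{-1}B)$ with $B \geq 0$ and $s > \rho(B)$, one has $\rho(s^{-1}B) = \rho(B)/s < 1$, so the series $\sum_{k=0}^{\infty}(s^{-1}B)^k$ converges and equals $(I - s^{-1}B)^{-1}$. Consequently $A^{-1} = s^{-1}\sum_{k=0}^{\infty}(s^{-1}B)^k$ is a sum of nonnegative matrices, hence $A^{-1} \geq 0$.

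For (ii) $\Rightarrow$ (iii), I would apply $A^{-1}$ to the concrete positive vector $\myvec{e} = (1,1,\ldots,1)^{\top}$ and set $\myvec{v} := A^{-1}\myvec{e}$. By construction $A\myvec{v} = \myvec{e} > \myvec{0}$, so the only thing to verify is the strict positivity of $\myvec{v}$. This follows from the fact that $A^{-1}$ is nonsingular, so no row of $A^{-1}$ can be identically zero, combined with $A^{-1} \geq 0$: each row therefore has at least one strictly positive entry, which forces $(A^{-1}\myvec{e})_i > 0$ for every $i$.

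For (iii) $\Rightarrow$ (i), the $Z$-matrix structure of $A$ already provides a splitting $A = sI - B$ with $B \geq 0$ whenever $s \geq \max_i a_{ii}$, so it remains only to prove the spectral inequality $s > \rho(B)$. The hypothesis rewrites as $s\myvec{v} > B\myvec{v}$ componentwise. I would then invoke Perron--Frobenius: let $\myvec{w} \geq \myvec{0}$, $\myvec{w} \neq \myvec{0}$, be a left eigenvector of $B$ for the eigenvalue $\rho(B)$, so that $\myvec{w}^{\top}B = \rho(B)\,\myvec{w}^{\top}$. Premultiplying $s\myvec{v} > B\myvec{v}$ by $\myvec{w}^{\top}$ and using that at least one coordinate of $\myvec{w}$ is strictly positive yields $s\,\myvec{w}^{\top}\myvec{v} > \rho(B)\,\myvec{w}^{\top}\myvec{v}$; since $\myvec{v} > \myvec{0}$, the scalar $\myvec{w}^{\top}\myvec{v}$ is strictly positive and can be cancelled to give $s > \rho(B)$.

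The main obstacle is the final implication: the hypothesis $s\myvec{v} > B\myvec{v}$ is a statement about one particular vector, whereas $\rho(B)$ is a global spectral quantity, so bridging the two requires the existence of a nonnegative left Perron eigenvector of $B$ and a careful check that its pairing with $\myvec{v}$ is strictly positive. Once that spectral tool is in hand, the passage from the componentwise inequality to the strict scalar inequality is routine.
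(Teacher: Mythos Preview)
Your cyclic argument is correct and is essentially the standard textbook proof of this equivalence. However, the paper itself does not prove this lemma at all: it is simply quoted, with attribution to \cite[Theorem~2.3 in Chapter~6, p.~137]{BermanP1994}, as a known result from the theory of nonnegative matrices. So there is nothing in the paper to compare your argument against---you have supplied a valid self-contained proof where the authors chose to cite the literature instead. If you want to align with the paper, a one-line citation suffices; if you prefer to keep your proof, it stands on its own.
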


The next well-known lemma is a direct consequence of the equivalence of (i) and (iii) in the above lemma. 
See \cite[Lemma 1]{Guo2013} for example.

\begin{mylemma}
  \label{lem:A-1>B-1}
  Let $A, B \in \RS^{n \times n}$ be Z-matrices.
  If $A$ is an $M$-matrix and $B \geq A$, then $B$ is also an $M$-matrix and $A^{-1} \geq B^{-1} \geq 0$.
\end{mylemma}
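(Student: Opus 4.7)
The plan is to derive both conclusions directly from the equivalences in Lemma \ref{lem:NonsingularMmatrix}, together with the standard resolvent-type identity $A^{-1} - B^{-1} = A^{-1}(B-A)B^{-1}$.

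First I would establish that $B$ is a nonsingular $M$-matrix by using the semipositivity criterion (iii) of Lemma \ref{lem:NonsingularMmatrix}. Since $A$ is a nonsingular $M$-matrix, that lemma gives a vector $\myvec{v} > \myvec{0}$ with $A\myvec{v} > \myvec{0}$. The hypothesis $B \geq A$ means $B - A \geq 0$ entrywise, so for this same positive $\myvec{v}$ one has $(B-A)\myvec{v} \geq \myvec{0}$ and hence $B\myvec{v} \geq A\myvec{v} > \myvec{0}$. Because $B$ is already assumed to be a $Z$-matrix, the implication (iii)$\Rightarrow$(i) of Lemma \ref{lem:NonsingularMmatrix} then certifies that $B$ is a nonsingular $M$-matrix.

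Next I would deduce the inequality $A^{-1} \geq B^{-1} \geq 0$. The nonnegativity of $A^{-1}$ and $B^{-1}$ is immediate from the equivalence (i)$\Leftrightarrow$(ii) in Lemma \ref{lem:NonsingularMmatrix} applied to $A$ and to the newly established $M$-matrix $B$. For the comparison, I would multiply the identity $B - A = B - A$ on the left by $A^{-1}$ and on the right by $B^{-1}$ to obtain
\[
A^{-1}(B-A)B^{-1} = A^{-1} - B^{-1}.
\]
The right-hand side is a product of three entrywise nonnegative matrices (the two inverses by what we just noted, and $B - A$ by hypothesis), so the difference $A^{-1} - B^{-1}$ is nonnegative, which is the desired $A^{-1} \geq B^{-1}$.

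There is no real obstacle here; the argument is essentially a single application of the semipositivity characterization plus a one-line algebraic identity. The only point that requires a small amount of care is verifying that the inequality $B\myvec{v} \geq A\myvec{v}$ is strict (i.e. $> \myvec{0}$ rather than just $\geq \myvec{0}$), but this is automatic because $A\myvec{v}$ itself is already strictly positive and $(B-A)\myvec{v}$ is nonnegative.
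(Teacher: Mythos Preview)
Your argument is correct and matches the paper's treatment: the paper does not spell out a proof but simply remarks that the lemma is a direct consequence of the equivalence of (i) and (iii) in Lemma~\ref{lem:NonsingularMmatrix}, citing \cite[Lemma~1]{Guo2013}. Your use of semipositivity to pass from $A$ to $B$, followed by inverse-positivity and the identity $A^{-1}-B^{-1}=A^{-1}(B-A)B^{-1}$, is exactly the standard way to flesh this out.
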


Recall that the Jacobian matrix of a continuously differentiable nonlinear operator 
$\myvec{g}: \RS^{n} \to \RS^{n}$ at point $\myvec{x} \in \RS^n$ is represented by $\myvec{g}'(\myvec{x})$.
If $\myvec{g}$ is twice continuously differentiable, 
then the Hessian matrix of $\myvec{g}$ at point $\myvec{x} \in \RS^n$ is denoted by $\myvec{g}''(\myvec{x})$,
and can be viewed as a bilinear mapping from $\RS^n \times \RS^n$ to $\RS^n$.
For convenience, for any $\myvec{u},\myvec{v} \in \RS^n$, 
we use the notation $\myvec{g}''(\myvec{x})\myvec{u}\myvec{v}$ to denote the element
$\myvec{g}''(\myvec{x})(\myvec{u},\myvec{v})$ in $\RS^n$.
It is worth noting that the Hessian matrix $\myvec{g}''(\myvec{x})$ is symmetric. 
That is,
$$
\myvec{g}''(\myvec{x})\myvec{u}\myvec{v} = \myvec{g}''(\myvec{x})\myvec{v}\myvec{u}, \quad
\forall\, \myvec{u},\myvec{v} \in \RS^n.
$$
See \cite{OrtegaR1970} for more details.
In addition, we have the following Taylor's formulas:
\begin{equation}
  \label{eq:Taylor}
  \myvec{g}(\myvec{x} + \myvec{h}) = \myvec{g}(\myvec{x}) + \myvec{g}'(\myvec{x})\myvec{h} 
  + \frac{1}{2}\myvec{g}''(\myvec{x})\myvec{h}\myvec{h} + o(\|\myvec{h}\|^2), \quad \myvec{h} \in \ball(\myvec{0},\delta),
\end{equation}
where $\ball(\myvec{0},\delta)$ is the open ball centered at $\myvec{0}$ with radius $\delta > 0$.

For any subspace $\mathcal{X} \subset \RS^n$, $\dim(\mathcal{X})$ denotes the dimension of $\mathcal{X}$.
The kernel or null space of a linear operator $A$ is denoted $\ker(A)$,
the image or range of the operator is denoted $\range(A)$.
$\range(A)$ and $\ker(A)$ are all subspaces of $\RS^n$.
Recall that a linear operator $P$ is called a projection if $P^2 = P$.
That is, projection $P$ is idempotent.
Note that if $P$ is a projection, 
then $I - P$ is also a projection, and 
$$
\range(P) = \ker{(I - P)}, \quad \ker(P) = \range(I-P), \quad \range(P) \oplus \ker{P} = \RS^n.
$$
One can see \cite{TrefethenB1997,HornJ2013,GolubV1996} for more details.
Let $P_{\mathcal{X}}$ be denoted the orthogonal projection onto the subspace $\mathcal{X}$.
Then $P_{\mathcal{X}}\myvec{x}$ must be an element of $\mathcal{X}$ for any $\myvec{x} \in \RS^n$.
When we choose $\mathcal{X} = \ker(A)$, 
for any $\myvec{x} \in \RS^n$ we have $A(P_{\mathcal{X}}\myvec{x}) = \myvec{0}$.
In addition, we use $A\big|_{\mathcal{X}}$ to denote the restriction of the operator $A$ to the subspace $\mathcal{X}$.
For any $\myvec{x} \in \mathcal{X}$, 
we remark that the norm on $\mathcal{X}$ is the same as the norm on $\RS^n$.
We conclude this section with a well-known result on the bounds of the norms of matrix-vector multiplication.
One can see \cite{Grcar2010} for more details.

\begin{mylemma}
  \label{lem:MatrixLowerBound}
  The matrix lower bound exists and is positive for any nonzero matrix. In particular,
  if $A \in \RS^{n \times n}$ is nonsingular, then we have
  $$
  \|A^{-1}\|^{-1}\|\myvec{x}\| \leq \|A\myvec{x}\| \leq \|A\|\|\myvec{x}\|, \quad \forall\, \myvec{x} \in \RS^n. 
  $$
  That is, the matrix lower bound is $\|A^{-1}\|^{-1}$. 
  Moreover, if the vector norms are $2$-norms, 
  then the matrix lower bound equals the smallest singular value of $A$.
\end{mylemma}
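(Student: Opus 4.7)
The plan is to break the lemma into three self-contained pieces—the two-sided inequality for nonsingular $A$, the existence of the infimum for any nonzero matrix, and the identification with the smallest singular value in the $2$-norm case—each of which reduces to a direct consequence of the definition of the induced operator norm.

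First I would dispose of the upper bound $\|A\myvec{x}\|\le\|A\|\,\|\myvec{x}\|$, which is immediate from the defining formula $\|A\|=\sup_{\myvec{x}\neq\myvec{0}}\|A\myvec{x}\|/\|\myvec{x}\|$ of the induced norm: the ratio is bounded above by $\|A\|$ for every nonzero $\myvec{x}$, and the case $\myvec{x}=\myvec{0}$ is trivial. For the lower bound, assuming $A$ nonsingular, I would apply the same inequality to the operator $A^{-1}$ with the vector $A\myvec{x}$ in place of $\myvec{x}$:
$$
\|\myvec{x}\|=\|A^{-1}(A\myvec{x})\|\le\|A^{-1}\|\,\|A\myvec{x}\|.
$$
Since $A^{-1}$ is a well-defined nonzero operator, $\|A^{-1}\|$ is finite and positive, and rearranging yields $\|A^{-1}\|^{-1}\|\myvec{x}\|\le\|A\myvec{x}\|$, which is exactly the desired lower bound.

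For the first assertion—existence of the matrix lower bound $\mathcal{L}(A):=\inf_{\|\myvec{x}\|=1}\|A\myvec{x}\|$—the key observation is a standard compactness argument: the unit sphere $\{\myvec{x}\in\RS^n:\|\myvec{x}\|=1\}$ is compact in $\RS^n$, and $\myvec{x}\mapsto\|A\myvec{x}\|$ is continuous, so the infimum is attained. It is strictly positive precisely when $A$ is nonsingular: in the singular case one can select $\myvec{x}\in\ker(A)$ with $\|\myvec{x}\|=1$ and then $\|A\myvec{x}\|=0$, while in the nonsingular case the inequality of the previous paragraph gives $\mathcal{L}(A)\ge\|A^{-1}\|^{-1}>0$. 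Equality $\mathcal{L}(A)=\|A^{-1}\|^{-1}$ would follow by picking a unit vector $\myvec{y}_*$ attaining $\|A^{-1}\|=\|A^{-1}\myvec{y}_*\|$ and setting $\myvec{x}_*=A^{-1}\myvec{y}_*/\|A^{-1}\myvec{y}_*\|$, which substitutes into $\|A\myvec{x}_*\|=1/\|A^{-1}\myvec{y}_*\|=\|A^{-1}\|^{-1}$.

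Finally, for the $2$-norm identity I would invoke the singular value decomposition $A=U\Sigma V^\top$ with $\Sigma=\diag(\sigma_1,\ldots,\sigma_n)$ and $\sigma_1\ge\cdots\ge\sigma_n\ge 0$. Using orthogonal invariance of the $2$-norm, the substitution $\myvec{y}=V^\top\myvec{x}$ gives
$$
\|A\myvec{x}\|_2^2=\|\Sigma\myvec{y}\|_2^2=\sum_{i=1}^n\sigma_i^2 y_i^2,
$$
which over $\|\myvec{y}\|_2=1$ is minimized at $\myvec{y}=\myvec{e}_n$ with minimum value $\sigma_n=\sigma_{\min}(A)$. Thus $\|A^{-1}\|_2^{-1}=\sigma_{\min}(A)$. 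There is no genuine obstacle in this proof: the content is entirely a matter of threading together the definition of the operator norm, compactness of the unit sphere, and the SVD, and the only subtlety worth flagging is that the positivity clause in the first sentence must be understood as applying to nonsingular matrices, since a nonzero singular matrix admits a unit vector in its kernel.
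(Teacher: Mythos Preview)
Your argument is correct and self-contained. The paper, however, does not supply a proof of this lemma at all: it is stated as a known result with a citation to Grcar (2010) and left unproved. So there is no ``paper's own proof'' to compare against; what you have written is strictly more than what the paper provides.

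One clarification worth making: the phrase ``matrix lower bound'' in the first sentence is being used in Grcar's sense, where for a general (possibly singular or rectangular) matrix it is defined via the range of $A$ rather than as $\inf_{\|\myvec{x}\|=1}\|A\myvec{x}\|$; under that definition it is indeed positive for every nonzero matrix, so the positivity clause is not a misstatement. Your reading of the lower bound as $\inf_{\|\myvec{x}\|=1}\|A\myvec{x}\|$ coincides with Grcar's definition precisely in the nonsingular square case, which is the only case the paper actually uses (in the proof of Theorem~4.2). Your proof of the displayed two-sided inequality and of the $2$-norm identification via the SVD is standard and correct, and entirely adequate for the paper's purposes.
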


\section{Two-step modified Newton method}
\label{sec:TwoStepModifiedNewton}

Recall that the matrices $P$ and $\widetilde{P}$ are defined in \eqref{mat:P_Ptilde}.
Let $P = [\myvec{p}_1,\myvec{p}_2,\ldots,\myvec{p}_n] \in \RS^{n \times n}$ 
and $\widetilde{P} = [\widetilde{\myvec{p}}_1,\widetilde{\myvec{p}}_2,\ldots,\widetilde{\myvec{p}}_n] \in \RS^{n \times n}$
be column partitions. 
Clearly, $\myvec{f}$ defined by \eqref{eq:f(x)=0} is a continuously Fr\'{e}chet differentiable nonlinear operator in $\RS^{2n}$.
The Jacobican matrix of $\myvec{f}(\myvec{u},\myvec{v})$ at point $(\myvec{u},\myvec{v})$ has the following form (see \cite{Lu2005b}): 
\begin{equation}
  \label{eq:Jacobian_f}
  \myvec{f}'(\myvec{u},\myvec{v}) = I_{2n} - G(\myvec{u},\myvec{v}),
\end{equation}
where
\begin{equation}
  \label{mat:G}
  G(\myvec{u},\myvec{v}) 
  = \begin{bmatrix}
    G_1(\myvec{v}) & H_1(\myvec{u}) \\
    H_2(\myvec{v}) & G_2(\myvec{u})
  \end{bmatrix}
\end{equation}
with
\begin{align*}
  G_1(\myvec{v}) &= \diag(P\myvec{v}), \quad 
    H_1(\myvec{u}) = [\myvec{u}\circ\myvec{p}_1,\myvec{u}\circ\myvec{p}_2,\ldots,\myvec{u}\circ\myvec{p}_n], \\
  G_2(\myvec{u}) &= \diag(\widetilde{P}\myvec{u}), \quad 
    H_2(\myvec{v}) = [\myvec{v}\circ\widetilde{\myvec{p}}_1,\myvec{v}\circ\widetilde{\myvec{p}}_2,\ldots,\myvec{v}\circ\widetilde{\myvec{p}}_n].
\end{align*}
For any $\myvec{x} = [\myvec{u}^{\top},\myvec{v}^{\top}]^{\top} \in \RS^{2n}$, we have
\begin{equation}
  \label{eq:f''(x)h1h2}
  \myvec{f}''(\myvec{x})\myvec{h}_1\myvec{h}_2 
  = [\myvec{h}_1^{\top}L_1^{\top}\myvec{h}_2,\ldots,\myvec{h}_1^{\top}L_{2n}^{\top}\myvec{h}_2]^{\top} \in \RS^{2n},
    \quad \forall\, \myvec{h}_1,\myvec{h}_2 \in \RS^{2n},
\end{equation}
where
$$
L_i = \begin{bmatrix}
  O & (-\myvec{e}_iP_i^{\top}) \\
  (-\myvec{e}_iP_i^{\top})^{\top} & O 
\end{bmatrix}, \ 
L_{n+i} = \begin{bmatrix}
  O & (-\widetilde{P}_i\myvec{e}_i^{\top}) \\
  (-\widetilde{P}_i\myvec{e}_i^{\top})^{\top} & O
\end{bmatrix}, \ i = 1,2,\ldots,n,
$$
with $P_i^{\top} = (p_{i1},p_{i2},\ldots,p_{in})$ and 
$\widetilde{P}_i^{\top} = (\widetilde{p}_{i1},\widetilde{p}_{i2},\ldots,\widetilde{p}_{in})$ 
being the $i$th row of the matrices $P$ and $\widetilde{P}$, respectively.
Clearly, all the matrices $L_i$ and $L_{n+i}$ are independent of $\myvec{x}$, symmetric Z-matrices.
This implies that $\myvec{f}''(\myvec{x})\myvec{h}\myvec{h}$ is independent of $\myvec{x}$.
Moreover, $\myvec{f}'''(\myvec{x})$ is the null operator.
This allows us to use the Taylor formula \eqref{eq:Taylor} to derive the following form:
\begin{equation}
  \label{eq:TaylorExpansion}
  \myvec{f}(\myvec{x} + \myvec{h}) = \myvec{f}(\myvec{x}) + \myvec{f}'(\myvec{x})\myvec{h} 
  + \frac{1}{2}\myvec{f}''(\myvec{x})\myvec{h}\myvec{h}, \quad \myvec{h} \in \RS^{2n}.
\end{equation}
The above expansion will frequently be used in the convergence analysis of the two-step modified Newton method \eqref{it:TSMNM}.

To apply the two-step modified Newton method \eqref{it:TSMNM} to solve \eqref{eq:f(x)=0},
we choose an initial guess $\myvec{x}_0 = [\myvec{u}_0^{\top},\myvec{v}_0^{\top}]^{\top} \in \RS^{2n}$, 
and set 
$$
\myvec{x}_k = [\myvec{u}_k^{\top},\myvec{v}_k^{\top}]^{\top}, \quad
\myvec{y}_k = [\overline{\myvec{u}}_k^{\top},\overline{\myvec{v}}_k^{\top}]^{\top}, \quad
\myvec{z}_k = [\widetilde{\myvec{u}}_k^{\top},\widetilde{\myvec{v}}_k^{\top}]^{\top}.
$$
The algorithm for implementing the two-step modified Newton method \eqref{it:TSMNM} is summarized in Algorithm \ref{alg:TSMNM} as follows.

\begin{algorithm}
  \caption{Two-step modified Newton method for solving (\ref{eq:f(x)=0})} 
  \label{alg:TSMNM}
  \textit{Initialization}. Given $c \in (0, 1]$ and $\alpha \in [0,1)$. 
  Form the matrices $P$ and $\tilde{P}$ by \eqref{mat:P_Ptilde}.
  Choose an initial point $[\myvec{u}^{\top}_0,\myvec{v}^{\top}_0]^{\top} \in \RS^{2n}$.
  \begin{itemize}[leftmargin=1em,itemindent=3.5em,parsep=0em,itemsep=0em,topsep=0em]
    \item[Step 1.]
    Form the matrix $G(\myvec{u}_0,\myvec{v}_0)$ by \eqref{mat:G}. 
    Compute $\overline{\myvec{v}}_0$ from the system of linear equations below:
    \begin{align*}
      \lefteqn{\left[I_n - G_2(\myvec{u}_0) - H_2(\myvec{v}_0)\big(I_n - G_1(\myvec{v}_0)\big)^{-1}H_1(\myvec{u}_0)\right]\overline{\myvec{v}}_0 } \\
        &\quad= H_2(\myvec{v}_0)\big(I_n - G_1(\myvec{v}_0)\big)^{-1}(\myvec{e} - H_1(\myvec{u}_0)\myvec{v}_0) + \myvec{e} - H_2(\myvec{v}_0)\myvec{u}_0.
    \end{align*}
    \item[Step 2.]
    Compute $\myvec{u}_0 = \big(I_n - G_1(\myvec{v}_0)\big)^{-1}[\myvec{e} + H_1(\myvec{u}_0)(\overline{\myvec{v}}_0 - \myvec{v}_0)]$ and set
    $\widetilde{\myvec{u}}_0 = (\myvec{u}_0 + \overline{\myvec{u}}_0)/2$,
    $\widetilde{\myvec{v}}_0 = (\myvec{v}_0 + \overline{\myvec{v}}_0)/2$.
    \item[Step 3.] 
    Compute $\myvec{v}_1$ from the system of linear equations below:
    \begin{align*}
      \lefteqn{\left[I_n - G_2(\widetilde{\myvec{u}}_0) - H_2(\widetilde{\myvec{v}}_0)\big(I_n - G_1(\widetilde{\myvec{v}}_0)\big)^{-1}H_1(\widetilde{\myvec{u}}_0)\right]\myvec{v}_1 } \\
        &\quad= H_2(\widetilde{\myvec{v}}_0)\big(I_n - G_1(\widetilde{\myvec{v}}_0)\big)^{-1}
          [\myvec{e} + \myvec{u}_0\circ(P\myvec{v}_0) - G_1(\widetilde{\myvec{v}}_0)\myvec{u}_0 - H_1(\widetilde{\myvec{u}}_0)\myvec{v}_0] \\
        &\qquad + \myvec{e} + \myvec{v}_0 \circ (\widetilde{P}\myvec{u}_0) - H_2(\widetilde{\myvec{v}}_0)\myvec{u}_0 - G_2(\widetilde{\myvec{u}}_0)\myvec{v}_0.
    \end{align*}
    \item[Step 4.]
    Compute $\myvec{u}_1$ from the following formula:
    $$
    \myvec{u}_1 = \big(I_n - G_1(\widetilde{\myvec{v}}_0)\big)^{-1}
    [\myvec{e} + H_1(\widetilde{\myvec{u}}_0)(\myvec{v}_1 - \widetilde{\myvec{v}}_0) + \myvec{u}_0\circ(P\myvec{v}_0) - G_1(\widetilde{\myvec{v}}_0)\myvec{u}_0].
    $$
  \end{itemize}
  \textit{Iterative process}. For $k = 1,2,\ldots$ until convergence, do:
  \begin{itemize}[leftmargin=1em,itemindent=3.5em,parsep=0em,itemsep=0em,topsep=0em]
    \item[Step 1.]
    Form the matrix $G(\myvec{u}_k,\myvec{v}_k)$ by \eqref{mat:G}.
    Compute $\overline{\myvec{v}}_k$ from the system of linear equations below:
    \begin{align*}
      \lefteqn{\left[I_n - G_2(\widetilde{\myvec{u}}_{k-1}) - H_2(\widetilde{\myvec{v}}_{k-1})
      \big(I_n - G_1(\widetilde{\myvec{v}}_{k-1})\big)^{-1}H_1(\widetilde{\myvec{u}}_{k-1})\right]\overline{\myvec{v}}_k } \\
        &= H_2(\widetilde{\myvec{v}}_{k-1})\big(I_n - G_1(\widetilde{\myvec{v}}_{k-1})\big)^{-1}
        [\myvec{e} + \myvec{u}_k\circ(P\myvec{v}_k) - G_1(\widetilde{\myvec{v}}_{k-1})\myvec{u}_k - H_1(\widetilde{\myvec{u}}_{k-1})\myvec{v}_k] \\
        &\quad + \myvec{e} + \myvec{v}_k\circ(\widetilde{P}\myvec{u}_k) - H_2(\widetilde{\myvec{v}}_{k-1})\myvec{u}_k - G_2(\widetilde{\myvec{u}}_{k-1})\myvec{v}_k.
    \end{align*}
    \item[Step 2.]
    Compute $\overline{\myvec{u}}_k$ from the following formula:
    $$
    \overline{\myvec{u}}_k
      = \big(I_n - G_1(\widetilde{\myvec{v}}_{k-1})\big)^{-1}
      [\myvec{e} + H_1(\widetilde{\myvec{u}}_{k-1})(\overline{\myvec{v}}_k - \myvec{v}_k) + \myvec{u}_k\circ(P\myvec{v}_k) - G_1(\widetilde{\myvec{v}}_{k-1})\myvec{u}_k],
    $$
    and set $\widetilde{\myvec{u}}_k = (\myvec{u}_k + \overline{\myvec{u}}_k)/2$,
    $\widetilde{\myvec{v}}_k = (\myvec{v}_k + \overline{\myvec{v}}_k)/2$.
    \item[Step 3.]
    Compute $\myvec{v}_{k+1}$ from the system of linear equations below:
    \begin{align*}
    \lefteqn{\left[I_n - G_2(\widetilde{\myvec{u}}_k) - H_2(\widetilde{\myvec{v}}_k)
    \big(I_n - G_1(\widetilde{\myvec{v}}_k)\big)^{-1}H_1(\widetilde{\myvec{u}}_k)\right]\myvec{v}_{k+1} } \\
      &\quad= H_2(\widetilde{\myvec{v}}_k)\big(I_n - G_1(\widetilde{\myvec{v}}_k)\big)^{-1}
      [\myvec{e} + \myvec{u}_k\circ(P\myvec{v}_k) - G_1(\widetilde{\myvec{v}}_k)\myvec{u}_k - H_1(\widetilde{\myvec{u}}_k)\myvec{v}_k] \\
      &\qquad + \myvec{e} + \myvec{v}_k\circ(\widetilde{P}\myvec{u}_k) - H_2(\widetilde{\myvec{v}}_k)\myvec{u}_k - G_2(\widetilde{\myvec{u}}_k)\myvec{v}_k.
    \end{align*}
    \item[Step 4.]
    Compute $\myvec{u}_{k+1}$ from the following formula:
    $$
    \myvec{u}_{k+1} = \big(I_n - G_1(\widetilde{\myvec{v}}_k)\big)^{-1}
    [\myvec{e} + H_1(\widetilde{\myvec{u}}_k)(\myvec{v}_{k+1} - \myvec{v}_k) + \myvec{u}_k\circ(P\myvec{v}_k) - G_1(\widetilde{\myvec{v}}_k)\myvec{u}_k].
    $$
  \end{itemize}
\vspace{-2mm}
\end{algorithm}

The convergence results provided in Section \ref{sec:ConvergenceAnalysis} guarantee that 
the aforementioned algorithm is both well-defined and convergent.

\section{Convergence analysis}
\label{sec:ConvergenceAnalysis}

In this section, we first establish the monotone convergence result for 
the two-step modified Newton method \eqref{it:TSMNM},
and then analyze its convergence rates at singular roots.

\subsection{The monotone convergence}

Assume that $\myvec{x}^* \in \RS^{2n}$ is the minimal positive solution of the equation \eqref{eq:f(x)=0}.
To show the monotone convergence of the two-step modified Newton method \eqref{it:TSMNM},
we need some lemmas.
The following lemma is taken from \cite[Lemma 5]{Lu2005b}.

\begin{mylemma}[{\cite{Lu2005b}}]
  \label{lem:f''(x)hh}
  For any $\myvec{h} \in \RS^{2n}$, $\myvec{f}''(\myvec{x})\myvec{h}\myvec{h}$ is independent of $\myvec{x} \in \RS^{2n}$.
  In particular, we have $\myvec{f}''(\myvec{x})\myvec{h}\myvec{h} < \myvec{0}$ for any $\myvec{h} \in \RS^{2n}\setminus\{\myvec{0}\}$.
\end{mylemma}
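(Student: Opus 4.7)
The plan is to work directly from the explicit representation (\ref{eq:f''(x)h1h2}) with $\myvec{h}_1 = \myvec{h}_2 = \myvec{h}$. The first claim, independence of $\myvec{x}$, is essentially a bookkeeping observation: each matrix $L_i$ appearing in (\ref{eq:f''(x)h1h2}) is built entirely from the rows of $P$ or $\widetilde{P}$ (defined in (\ref{mat:P_Ptilde})) and the standard basis vectors $\myvec{e}_i$; none of these depends on $\myvec{x}$. Therefore the $i$th coordinate of $\myvec{f}''(\myvec{x})\myvec{h}\myvec{h}$ is the scalar $\myvec{h}^{\top} L_i \myvec{h}$, which is a function of $\myvec{h}$ alone. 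This is also consistent with $\myvec{f}$ being a quadratic polynomial map in $(\myvec{u},\myvec{v})$: the only nonlinear contributions come from the bilinear Hadamard products $\myvec{u} \circ P\myvec{v}$ and $\myvec{v} \circ \widetilde{P}\myvec{u}$, so $\myvec{f}'''\equiv 0$ and $\myvec{f}''$ is a constant bilinear form.

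For the sign assertion, I would decompose $\myvec{h} = [\myvec{h}_u^{\top},\myvec{h}_v^{\top}]^{\top}$ with $\myvec{h}_u,\myvec{h}_v\in\RS^{n}$ and carry out the block calculation for $\myvec{h}^{\top} L_i \myvec{h}$ directly. Using $\myvec{e}_i^{\top}\myvec{h}_u = (\myvec{h}_u)_i$ and $P_i^{\top}\myvec{h}_v = (P\myvec{h}_v)_i$, together with the anti-diagonal block structure of $L_i$ and $L_{n+i}$, a short calculation yields
$$
\myvec{h}^{\top} L_i \myvec{h} = -2(\myvec{h}_u)_i (P\myvec{h}_v)_i, \qquad
\myvec{h}^{\top} L_{n+i}\myvec{h} = -2(\myvec{h}_v)_i (\widetilde{P}\myvec{h}_u)_i,
$$
for $i=1,\ldots,n$. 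Assembling the coordinates gives the compact identity
$$
\myvec{f}''(\myvec{x})\myvec{h}\myvec{h} = -2\begin{bmatrix} \myvec{h}_u \circ (P\myvec{h}_v) \\ \myvec{h}_v \circ (\widetilde{P}\myvec{h}_u) \end{bmatrix},
$$
which makes the $\myvec{x}$-independence transparent and will be the working form used throughout the convergence analysis.

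The conclusion on the sign then follows from the entrywise positivity of $P$ and $\widetilde{P}$: both matrices have entries $q_j/(\delta_i + \gamma_j) > 0$ by the hypotheses on $\myvec{q},\Delta,\Gamma$. As long as $\myvec{h} > \myvec{0}$, we have $\myvec{h}_u > \myvec{0}$ and $\myvec{h}_v > \myvec{0}$, so $P\myvec{h}_v > \myvec{0}$ and $\widetilde{P}\myvec{h}_u > \myvec{0}$, and hence both Hadamard products in the compact identity are entrywise positive, giving $\myvec{f}''(\myvec{x})\myvec{h}\myvec{h} < \myvec{0}$.

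The proof itself contains no real obstacle, just a routine block calculation followed by a one-line positivity argument. The only subtlety is reading the quantifier in the lemma correctly: since the inequality is componentwise, it cannot possibly hold for \emph{every} nonzero $\myvec{h}\in\RS^{2n}$ (e.g. $\myvec{h}_u=\myvec{0}$, $\myvec{h}_v\neq\myvec{0}$ makes the upper block of the explicit formula vanish). The statement must be understood as applying to strictly positive $\myvec{h}$, which is the only regime invoked in the subsequent monotonicity arguments for the increments $\myvec{x}_{k+1}-\myvec{x}_k$ and $\myvec{y}_k-\myvec{x}_k$.
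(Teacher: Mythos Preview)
Your argument is correct and complete. The paper does not give its own proof of this lemma; it simply cites \cite[Lemma~5]{Lu2005b}, so there is nothing to compare against. Your derivation of the explicit form
\[
\myvec{f}''(\myvec{x})\myvec{h}\myvec{h} = -2\begin{bmatrix} \myvec{h}_u \circ (P\myvec{h}_v) \\ \myvec{h}_v \circ (\widetilde{P}\myvec{h}_u) \end{bmatrix}
\]
from \eqref{eq:f''(x)h1h2} is exactly the right computation, and the positivity of $P,\widetilde{P}$ then does the rest.

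Your remark about the quantifier is also well taken: as literally stated, the componentwise strict inequality $\myvec{f}''(\myvec{x})\myvec{h}\myvec{h} < \myvec{0}$ fails for general $\myvec{h}\in\RS^{2n}\setminus\{\myvec{0}\}$ (your example $\myvec{h}_u=\myvec{0}$, $\myvec{h}_v\neq\myvec{0}$ suffices). The paper only ever invokes the lemma for strictly positive vectors such as $\myvec{x}^*-\myvec{z}_k$ or $\myvec{x}_{k+1}-\myvec{z}_k$, and the refined inequalities \eqref{ineq:f''(x)h1h2h3} likewise carry sign hypotheses on the arguments, so the restriction to $\myvec{h}>\myvec{0}$ is exactly what is needed downstream.
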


Moreover, for any $\myvec{h}_1, \myvec{h}_2,\myvec{h}_3 \in \RS^{2n}$, we have
\begin{equation}
  \label{ineq:f''(x)h1h2h3}
  \begin{cases}
    \myvec{f}''(\myvec{x})\myvec{h}_2\myvec{h}_2 < \myvec{f}''(\myvec{x})\myvec{h}_1\myvec{h}_1 < \myvec{0},
      & \mbox{when $\myvec{0} < \myvec{h}_1 < \myvec{h}_2$}; \\
    \myvec{f}''(\myvec{x})\myvec{h}_1\myvec{h}_2 > \myvec{0}, 
      & \mbox{when $\myvec{h}_2 < \myvec{0} < \myvec{h}_1$}; \\
      \myvec{f}''(\myvec{x})\myvec{h}_1\myvec{h}_3 > \myvec{f}''(\myvec{x})\myvec{h}_2\myvec{h}_3,
      & \mbox{when $\myvec{h}_1 < \myvec{h}_2 < \myvec{0}$ and $\myvec{h}_3 > \myvec{0}$}.
  \end{cases}
\end{equation}

The lemma below is taken from \cite[Corollary 7]{Lu2005b}.

\begin{mylemma}[{\cite{Lu2005b}}]
  \label{lem:f'invertibility}
  Let $\myvec{x}^* \in \RS^{2n}$ be the minimal positive solution of \eqref{eq:f(x)=0}.
  If $G(\myvec{x})$ is defined by \eqref{mat:G}, then $\rho\big(G(\myvec{x}^*)\big) \leq 1$.
  That is, $\myvec{f}'(\myvec{x}^*) = I_{2n} - G(\myvec{x}^*)$ is an $M$-matrix.
  In addition, for any $\myvec{x} \in \RS^{2n}$ with $\myvec{0} \leq \myvec{x} < \myvec{x}^*$,
  $\myvec{f}'(\myvec{x})$ is a nonsingular $M$-matrix.
\end{mylemma}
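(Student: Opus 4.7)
The plan is to treat the second claim (nonsingularity of $\myvec{f}'(\myvec{x})$ for $\myvec{0} \leq \myvec{x} < \myvec{x}^*$) as the main target and then deduce the first by a continuity argument. Three structural observations anchor the whole argument: (a) $G(\myvec{x}) \geq \myvec{0}$ entrywise whenever $\myvec{x} \geq \myvec{0}$, so $\myvec{f}'(\myvec{x}) = I_{2n} - G(\myvec{x})$ is a Z-matrix and Lemma~\ref{lem:NonsingularMmatrix} applies; (b) because $P$ and $\widetilde{P}$ are strictly positive, each of the blocks $G_1(\myvec{v}), G_2(\myvec{u}), H_1(\myvec{u}), H_2(\myvec{v})$ is entrywise monotone in its argument, so $\myvec{x}_1 \leq \myvec{x}_2$ yields $G(\myvec{x}_1) \leq G(\myvec{x}_2)$; and (c) $G(\myvec{x}^*)$ is irreducible, since its off-diagonal blocks $H_1(\myvec{u}^*)$ and $H_2(\myvec{v}^*)$ are strictly positive matrices.

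For the second claim, my first attempt would be to apply Taylor's identity \eqref{eq:TaylorExpansion} at $\myvec{x}$ with $\myvec{f}(\myvec{x}^*) = \myvec{0}$ to obtain
\[
  \myvec{f}'(\myvec{x})(\myvec{x}^* - \myvec{x}) = -\myvec{f}(\myvec{x}) - \tfrac{1}{2}\myvec{f}''(\myvec{x})(\myvec{x}^*-\myvec{x})(\myvec{x}^*-\myvec{x}).
\]
Lemma~\ref{lem:f''(x)hh} makes the quadratic term strictly positive since $\myvec{x}^* - \myvec{x} > \myvec{0}$, so if I can also establish $-\myvec{f}(\myvec{x}) \geq \myvec{0}$ on the interior of the order interval, then $\myvec{f}'(\myvec{x})(\myvec{x}^* - \myvec{x}) > \myvec{0}$ with $\myvec{x}^* - \myvec{x} > \myvec{0}$, and Lemma~\ref{lem:NonsingularMmatrix}(iii) yields the nonsingular $M$-matrix conclusion.

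For the first claim, once the second is in hand, I would pick any sequence $\myvec{x}_k \nearrow \myvec{x}^*$ with $\myvec{x}_k < \myvec{x}^*$ (say $\myvec{x}_k = (1-1/k)\myvec{x}^*$), apply the second claim to get $\rho(G(\myvec{x}_k)) < 1$, and pass to the limit using continuity of the spectral radius in the matrix entries to conclude $\rho(G(\myvec{x}^*)) \leq 1$. The main obstacle I anticipate is closing the second claim without circularity: a direct coordinatewise check shows that $\myvec{f}(\myvec{x}) \leq \myvec{0}$ can actually fail on $[\myvec{0}, \myvec{x}^*)$ (for example, decreasing only $\myvec{v}$ while keeping $\myvec{u} = \myvec{u}^*$ yields a strictly positive component in the first block of $\myvec{f}$), so the Taylor route demands more than the naive sign inequality on $\myvec{f}$. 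The cleanest workaround I see is to reverse the order and establish the first claim directly from the identity $G(\myvec{x}^*)\myvec{x}^* = 2(\myvec{x}^* - \myvec{e}_{2n})$ (which drops out of \eqref{eq:VecEq_uv} via the relations $G_1(\myvec{v})\myvec{u} = H_1(\myvec{u})\myvec{v} = \myvec{u}\circ (P\myvec{v})$ and the symmetric ones for the second block), combined with an auxiliary bound of the form $\myvec{x}^* \leq 2\myvec{e}_{2n}$; the second claim then follows by strict Perron-Frobenius comparison, since (b) upgrades $\myvec{x} < \myvec{x}^*$ to $G(\myvec{x}) < G(\myvec{x}^*)$ at every nonzero entry, and combined with (c) this forces $\rho(G(\myvec{x})) < \rho(G(\myvec{x}^*)) \leq 1$.
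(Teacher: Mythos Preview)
The paper does not prove this lemma; it is quoted from \cite[Corollary 7]{Lu2005b} without argument, so there is no in-paper proof to compare against.

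Your final route has a genuine gap. The identity $G(\myvec{x}^*)\myvec{x}^* = 2(\myvec{x}^* - \myvec{e}_{2n})$ is correct, and the Perron--Frobenius comparison in your step (3) is sound: once $\rho(G(\myvec{x}^*)) \leq 1$ is known, $G(\myvec{x}) \lneq G(\myvec{x}^*)$ together with irreducibility of $G(\myvec{x}^*)$ forces $\rho(G(\myvec{x})) < 1$ for every $\myvec{0} \leq \myvec{x} < \myvec{x}^*$. The problem is the ``auxiliary bound'' $\myvec{x}^* \leq 2\myvec{e}_{2n}$: you state it without proof, and via your own identity it is \emph{equivalent} to $G(\myvec{x}^*)\myvec{x}^* \leq \myvec{x}^*$, which (with $\myvec{x}^* > \myvec{0}$) already yields $\rho(G(\myvec{x}^*)) \leq 1$. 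So you are assuming a statement at least as strong as the one you set out to prove. Moreover, if the strict bound $\myvec{x}^* < 2\myvec{e}_{2n}$ held, then $\myvec{f}'(\myvec{x}^*)\myvec{x}^* = 2\myvec{e}_{2n} - \myvec{x}^* > \myvec{0}$ would make $\myvec{f}'(\myvec{x}^*)$ a \emph{nonsingular} $M$-matrix for every admissible $(\alpha,c)$, contradicting the known singularity at $\alpha = 0$, $c = 1$; hence the bound, if true, is tight in the critical case and is certainly not a throwaway observation.

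A cleaner repair is to resurrect your first (Taylor) approach, but only along the Newton iterates $\myvec{x}_k$ from $\myvec{x}_0 = \myvec{0}$. There one maintains inductively $\myvec{f}(\myvec{x}_k) < \myvec{0}$ and $\myvec{0} \leq \myvec{x}_k < \myvec{x}^*$, so your identity
\[
  \myvec{f}'(\myvec{x}_k)(\myvec{x}^* - \myvec{x}_k) = -\myvec{f}(\myvec{x}_k) - \tfrac12\,\myvec{f}''(\myvec{x}_k)(\myvec{x}^* - \myvec{x}_k)(\myvec{x}^* - \myvec{x}_k) > \myvec{0}
\]
does supply the semipositive vector at each step, giving $\rho(G(\myvec{x}_k)) < 1$. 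Continuity of the spectral radius then yields $\rho(G(\myvec{x}^*)) \leq 1$, after which your Perron--Frobenius comparison handles arbitrary $\myvec{0} \leq \myvec{x} < \myvec{x}^*$. This is, in essence, the argument in \cite{Lu2005b}.
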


Since $G(\myvec{x}) < G(\myvec{y})$ when $\myvec{x} < \myvec{y}$, 
it follows that $\myvec{f}'(\myvec{x}) > \myvec{f}'(\myvec{y})$.
By combining Lemmas \ref{lem:A-1>B-1} and \ref{lem:f'invertibility}, we have the following lemma.

\begin{mylemma}
  \label{lem:f'(x)-1<f'(y)-1}
  If $\myvec{0} < \myvec{x} < \myvec{y} < \myvec{x}^*$, 
  then $0 < \myvec{f}'(\myvec{x})^{-1} < \myvec{f}'(\myvec{y})^{-1}$.
\end{mylemma}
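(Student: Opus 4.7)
The plan is to first upgrade the remark in the excerpt (that $G(\mathbf{x}) < G(\mathbf{y})$ when $\mathbf{x} < \mathbf{y}$) into the entrywise inequality $\mathbf{f}'(\mathbf{x}) > \mathbf{f}'(\mathbf{y})$ (equivalently $G(\mathbf{y}) - G(\mathbf{x}) > 0$ where it can be), then combine Lemmas~\ref{lem:A-1>B-1} and~\ref{lem:f'invertibility} to get the non-strict chain $0 \leq \mathbf{f}'(\mathbf{x})^{-1} \leq \mathbf{f}'(\mathbf{y})^{-1}$, and finally upgrade both inequalities to strict ones using a Neumann-series argument and a resolvent identity. The starting point is that every entry of $G(\mathbf{u},\mathbf{v})$ built in~\eqref{mat:G} is monotone in $\mathbf{u}$ and $\mathbf{v}$: the diagonal blocks $G_1(\mathbf{v})=\diag(P\mathbf{v})$ and $G_2(\mathbf{u})=\diag(\widetilde{P}\mathbf{u})$ depend linearly on $\mathbf{v}$ and $\mathbf{u}$ through the positive matrices $P,\widetilde{P}$, and the off-diagonal blocks $H_1(\mathbf{u}), H_2(\mathbf{v})$ are Hadamard products with the positive columns of $P, \widetilde{P}$. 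So $\mathbf{0}<\mathbf{x}<\mathbf{y}$ forces $G(\mathbf{x})<G(\mathbf{y})$ wherever the respective blocks are nonzero, hence $\mathbf{f}'(\mathbf{x})-\mathbf{f}'(\mathbf{y})=G(\mathbf{y})-G(\mathbf{x})\geq 0$ with strictly positive off-diagonal blocks and strictly positive diagonal entries on the diagonal blocks.

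Next I would invoke Lemma~\ref{lem:f'invertibility}: since $\mathbf{0}<\mathbf{x}<\mathbf{y}<\mathbf{x}^*$, both $\mathbf{f}'(\mathbf{x})$ and $\mathbf{f}'(\mathbf{y})$ are nonsingular $M$-matrices, and clearly they are $Z$-matrices because $G(\mathbf{x}),G(\mathbf{y})\geq 0$. Applying Lemma~\ref{lem:A-1>B-1} with $A=\mathbf{f}'(\mathbf{y})$ and $B=\mathbf{f}'(\mathbf{x})\geq A$ yields the non-strict bounds $0\leq \mathbf{f}'(\mathbf{x})^{-1}\leq \mathbf{f}'(\mathbf{y})^{-1}$.

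The remaining (and main) task is to promote these to strict inequalities, which is the only place that does not follow by a direct citation. For strict positivity of $\mathbf{f}'(\mathbf{x})^{-1}$, I would expand the Neumann series $\mathbf{f}'(\mathbf{x})^{-1}=\sum_{k\geq 0} G(\mathbf{x})^{k}$, valid because $\rho(G(\mathbf{x}))<1$ by Lemma~\ref{lem:f'invertibility}. Even though $G(\mathbf{x})$ itself is not entrywise positive (its diagonal blocks are diagonal), a single matrix product shows that $G(\mathbf{x})^{2}$ has the strictly positive matrix $H_1(\mathbf{u})H_2(\mathbf{v})$ in its $(1,1)$-block and $H_2(\mathbf{v})H_1(\mathbf{u})$ in its $(2,2)$-block, while the off-diagonal blocks of $G(\mathbf{x})$ itself are already strictly positive. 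Therefore $\mathbf{f}'(\mathbf{x})^{-1}\geq I+G(\mathbf{x})+G(\mathbf{x})^{2}>0$ strictly, and likewise $\mathbf{f}'(\mathbf{y})^{-1}>0$.

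For the strict sandwich inequality $\mathbf{f}'(\mathbf{x})^{-1}<\mathbf{f}'(\mathbf{y})^{-1}$ I would use the resolvent identity
\begin{equation*}
\mathbf{f}'(\mathbf{y})^{-1}-\mathbf{f}'(\mathbf{x})^{-1}
 = \mathbf{f}'(\mathbf{x})^{-1}\bigl(\mathbf{f}'(\mathbf{x})-\mathbf{f}'(\mathbf{y})\bigr)\mathbf{f}'(\mathbf{y})^{-1}
 = \mathbf{f}'(\mathbf{x})^{-1}\bigl(G(\mathbf{y})-G(\mathbf{x})\bigr)\mathbf{f}'(\mathbf{y})^{-1}.
\end{equation*}
The middle factor $G(\mathbf{y})-G(\mathbf{x})$ is nonnegative and nonzero (the strict componentwise inequality $\mathbf{x}<\mathbf{y}$ makes at least one, and in fact every off-diagonal entry of the off-diagonal blocks, strictly positive), and it is sandwiched between two entrywise strictly positive matrices; such a product is strictly positive in every entry, which gives the desired $\mathbf{f}'(\mathbf{x})^{-1}<\mathbf{f}'(\mathbf{y})^{-1}$. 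The expected main obstacle is precisely this strictness upgrade, since Lemma~\ref{lem:A-1>B-1} only delivers the non-strict version; the Neumann-series plus resolvent-identity combination is the cleanest way to close that gap using only structural facts about $G$ that are already on the table.
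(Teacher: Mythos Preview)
Your proposal is correct and follows the same route as the paper, which simply states that the result follows from $G(\myvec{x}) < G(\myvec{y})$ together with Lemmas~\ref{lem:A-1>B-1} and~\ref{lem:f'invertibility}. In fact you are more careful than the paper: Lemma~\ref{lem:A-1>B-1} only yields the non-strict chain $0 \le \myvec{f}'(\myvec{x})^{-1} \le \myvec{f}'(\myvec{y})^{-1}$, and your Neumann-series argument for $\myvec{f}'(\myvec{x})^{-1} > 0$ together with the resolvent identity is exactly the right way to upgrade to the strict inequalities that the lemma actually asserts.
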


We now present the following monotone convergence result for the two-step modified Newton method \eqref{it:TSMNM}.

\begin{mytheorem}
  \label{th:MonotoneConvergence}
  Let $\{\myvec{x}_k\}$, $\{\myvec{y}_k\}$ and $\{\myvec{z}_k\}$ be the sequences generated 
  by the two-step modified Newton method \eqref{it:TSMNM} with an appropriate initial guess $\myvec{x}_0 \in \RS^{2n}$.
  If $\myvec{0} \leq \myvec{x}_0 < \myvec{x}^*$ and $\myvec{f}(\myvec{x}_0) < \myvec{0}$,
  then the sequences $\{\myvec{x}_k\}$, $\{\myvec{y}_k\}$ and $\{\myvec{z}_k\}$ are well-defined
  and the following statements hold:
  \begin{enumerate}[label=\textup{(\roman*)}]
    \item $\myvec{f}(\myvec{x}_k) < \myvec{0}$ and $\myvec{f}'(\myvec{z}_k)$ is a nonsingular $M$-matrix 
    and $\myvec{f}'(\myvec{z}_k) > 0$ for all $k \geq 0$.
    \item $\myvec{0} \leq \myvec{x}_k < \myvec{z}_k < \myvec{y}_k < \myvec{x}_{k+1} < \myvec{x}^*$ for all $k \geq 0$.
    \item $\mylim_{k \to \infty} \myvec{x}_k = \mylim_{k \to \infty} \myvec{y}_k = \mylim_{k \to \infty} \myvec{z}_k = \myvec{x}^*$.
  \end{enumerate}
\end{mytheorem}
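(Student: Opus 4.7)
The plan is induction on $k$, carrying (i) and (ii) jointly as the inductive invariant, with (iii) then following from monotone boundedness and a continuity argument. The structural feature that makes the analysis tractable is that for the operator \eqref{eq:f(x)=0} the third Fr\'{e}chet derivative vanishes identically (as noted in Section \ref{sec:TwoStepModifiedNewton}), so the expansion \eqref{eq:TaylorExpansion} is an \emph{exact} equality. Combined with Lemma \ref{lem:f''(x)hh} and the strict monotonicity \eqref{ineq:f''(x)h1h2h3}, together with the monotone-inverse principle of Lemmas \ref{lem:f'invertibility}--\ref{lem:f'(x)-1<f'(y)-1}, every bound will reduce to comparing two bilinear forms $\myvec{f}''\myvec{h}\myvec{h}$ whose arguments can be put in an increasing chain.

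At each induction step (including the initialization $k=0$, where $\myvec{z}_{-1}=\myvec{x}_0$ so the $\myvec{y}_0$-update is just an ordinary Newton step), I would verify the four pieces of the invariant in sequence. \textbf{(a) Producing $\myvec{y}_k$.} From $\myvec{f}'(\myvec{z}_{k-1})^{-1}>0$ and $\myvec{f}(\myvec{x}_k)<\myvec{0}$ one has $\myvec{y}_k>\myvec{x}_k$ immediately. Expanding both $\myvec{0}=\myvec{f}(\myvec{x}^*)$ and $\myvec{f}(\myvec{x}_k)$ around $\myvec{z}_{k-1}$ and subtracting gives
$$\myvec{x}^*-\myvec{y}_k=-\tfrac{1}{2}\myvec{f}'(\myvec{z}_{k-1})^{-1}\bigl[\myvec{f}''(\myvec{x}^*-\myvec{z}_{k-1})(\myvec{x}^*-\myvec{z}_{k-1})-\myvec{f}''(\myvec{x}_k-\myvec{z}_{k-1})(\myvec{x}_k-\myvec{z}_{k-1})\bigr],$$
and the chain $\myvec{0}\le\myvec{x}_k-\myvec{z}_{k-1}<\myvec{x}^*-\myvec{z}_{k-1}$ makes the bracket negative by Lemma \ref{lem:f''(x)hh} together with \eqref{ineq:f''(x)h1h2h3}. \textbf{(b) Producing $\myvec{z}_k$.} The midpoint formula yields $\myvec{x}_k<\myvec{z}_k<\myvec{y}_k<\myvec{x}^*$, so Lemmas \ref{lem:f'invertibility} and \ref{lem:f'(x)-1<f'(y)-1} give $\myvec{f}'(\myvec{z}_k)^{-1}>\myvec{f}'(\myvec{z}_{k-1})^{-1}>0$. \textbf{(c) Producing $\myvec{x}_{k+1}$.} The strict inverse inequality plus $-\myvec{f}(\myvec{x}_k)>\myvec{0}$ yields $\myvec{x}_{k+1}>\myvec{y}_k$; repeating the Taylor trick around $\myvec{z}_k$ produces $\myvec{x}_{k+1}<\myvec{x}^*$, the essential comparison being $\myvec{x}^*-\myvec{z}_k>\myvec{z}_k-\myvec{x}_k$, which holds because $\myvec{z}_k$ is the midpoint of $\myvec{x}_k,\myvec{y}_k$ and $\myvec{y}_k<\myvec{x}^*$. \textbf{(d) Verifying $\myvec{f}(\myvec{x}_{k+1})<\myvec{0}$.} Expanding $\myvec{f}(\myvec{x}_{k+1})-\myvec{f}(\myvec{x}_k)$ around $\myvec{z}_k$ and using $\myvec{f}'(\myvec{z}_k)(\myvec{x}_{k+1}-\myvec{x}_k)=-\myvec{f}(\myvec{x}_k)$ reduces to
$$\myvec{f}(\myvec{x}_{k+1})=\tfrac{1}{2}\bigl[\myvec{f}''(\myvec{x}_{k+1}-\myvec{z}_k)(\myvec{x}_{k+1}-\myvec{z}_k)-\myvec{f}''(\myvec{z}_k-\myvec{x}_k)(\myvec{z}_k-\myvec{x}_k)\bigr],$$
which is negative by \eqref{ineq:f''(x)h1h2h3} once I record $\myvec{x}_{k+1}-\myvec{z}_k>\myvec{z}_k-\myvec{x}_k>\myvec{0}$ (equivalent to $\myvec{x}_{k+1}>\myvec{y}_k$, already established in (c)).

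Part (iii) is then quick: $\{\myvec{x}_k\}$ is monotone and bounded above by $\myvec{x}^*$, so it converges to some $\myvec{x}^{**}\le\myvec{x}^*$, and the sandwich $\myvec{x}_k<\myvec{z}_k<\myvec{y}_k<\myvec{x}_{k+1}$ propagates the same limit to $\{\myvec{y}_k\}$ and $\{\myvec{z}_k\}$. Passing to the limit in $\myvec{f}(\myvec{x}_k)=-\myvec{f}'(\myvec{z}_{k-1})(\myvec{y}_k-\myvec{x}_k)$ via the continuity of $\myvec{f}$ and $\myvec{f}'$ gives $\myvec{f}(\myvec{x}^{**})=\myvec{0}$, and the minimality of $\myvec{x}^*$ then forces $\myvec{x}^{**}=\myvec{x}^*$.

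I expect the main obstacle to lie not in any individual Taylor computation but in recognizing that the \emph{specific} midpoint choice $\myvec{z}_k=(\myvec{x}_k+\myvec{y}_k)/2$ is what synchronizes the two critical strict comparisons $\myvec{x}^*-\myvec{z}_k>\myvec{z}_k-\myvec{x}_k$ (needed in (c)) and $\myvec{x}_{k+1}-\myvec{z}_k>\myvec{z}_k-\myvec{x}_k$ (needed in (d)); any other weighting of $\myvec{x}_k$ and $\myvec{y}_k$ would break the symmetric pairing of the bilinear form that makes the induction close. Once this bookkeeping is in place, every individual computation is routine.
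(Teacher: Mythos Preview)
Your proposal is correct and follows essentially the same route as the paper's proof: induction on $k$ using the exact second-order Taylor expansion \eqref{eq:TaylorExpansion} about $\myvec{z}_{k-1}$ and $\myvec{z}_k$, the $M$-matrix monotonicity of Lemmas \ref{lem:f'invertibility}--\ref{lem:f'(x)-1<f'(y)-1}, and the same two midpoint comparisons $\myvec{x}^*-\myvec{z}_k>\myvec{z}_k-\myvec{x}_k$ and $\myvec{x}_{k+1}-\myvec{z}_k>\myvec{z}_k-\myvec{x}_k$ that drive the strict sign of the bilinear differences via \eqref{ineq:f''(x)h1h2h3}. The only organizational difference is that you treat the base case and the inductive step uniformly (your expansion in (a) about $\myvec{z}_{k-1}$ specializes to the paper's base-case computation when $\myvec{z}_{-1}=\myvec{x}_0$ and the second bilinear term vanishes), whereas the paper writes out $k=0$ separately; your closing remark about the midpoint being what synchronizes the two comparisons is exactly the mechanism the paper uses but does not comment on explicitly.
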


\begin{proof}
  We prove the theorem by induction on $k$.
  For $k = 0$, since $\myvec{0} \leq \myvec{x}_0 < \myvec{x}^*$ and $\myvec{z}_{-1} = \myvec{x}_0$,
  it follows from Lemma \ref{lem:f'invertibility} that $\myvec{f}'(\myvec{z}_{-1})$ is a nonsingular $M$-matrix.
  Then $\myvec{f}'(\myvec{z}_{-1})^{-1} \geq 0$ by Lemma \ref{lem:NonsingularMmatrix}.
  Thanks to \eqref{it:TSMNM}, we have
  $$
  \myvec{f}'(\myvec{z}_{-1}) (\myvec{y}_0 - \myvec{x}_0) = -\myvec{f}(\myvec{x}_0) > \myvec{0}.
  $$
  This leads to 
  $\myvec{y}_0 - \myvec{x}_0 
    = \myvec{f}'(\myvec{z}_{-1})^{-1}[\myvec{f}'(\myvec{z}_{-1})(\myvec{y}_0 - \myvec{x}_0)] > \myvec{0}$,
  which gives $\myvec{0} \leq \myvec{x}_0 < \myvec{y}_0$ 
  and so $\myvec{x}_0 < \myvec{z}_0 = (\myvec{x}_0 + \myvec{y}_0)/2 < \myvec{y}_0$.
  Recalling \eqref{it:TSMNM}, we obtain
  \begin{align}
    \myvec{f}'(\myvec{z}_{-1})(\myvec{y}_0 - \myvec{x}^*)
      &= \myvec{f}'(\myvec{z}_{-1})(\myvec{y}_0 - \myvec{x}_0 + \myvec{x}_0 - \myvec{x}^*) \nonumber \\
      &= \myvec{f}'(\myvec{z}_{-1})(\myvec{y}_0 - \myvec{x}_0) + \myvec{f}'(\myvec{z}_{-1})(\myvec{x}_0 - \myvec{x}^*) \nonumber \\
      &= -\myvec{f}(\myvec{x}_0) + \myvec{f}'(\myvec{z}_{-1})(\myvec{x}_0 - \myvec{x}^*). \label{eq:f'(z-1)(y0-x*)}
  \end{align}
  By Taylor's expansion \eqref{eq:TaylorExpansion}, it holds that 
  $$
  \myvec{0} = \myvec{f}(\myvec{x}^*) 
    = \myvec{f}(\myvec{z}_{-1}) + \myvec{f}'(\myvec{z}_{-1})(\myvec{x}^* - \myvec{z}_{-1})
      + \frac{1}{2}\myvec{f}''(\myvec{z}_{-1})(\myvec{x}^* - \myvec{z}_{-1})(\myvec{x}^* - \myvec{z}_{-1}).
  $$
  By substituting this expansion into \eqref{eq:f'(z-1)(y0-x*)}, we conclude from Lemma \ref{lem:f''(x)hh} that
  $$  
    \myvec{f}'(\myvec{z}_{-1})(\myvec{y}_0 - \myvec{x}^*)
      = \frac{1}{2}\myvec{f}''(\myvec{z}_{-1})(\myvec{x}^* - \myvec{z}_{-1})(\myvec{x}^* - \myvec{z}_{-1})
      < \myvec{0}.
  $$
  This yields that 
  $\myvec{y}_0 - \myvec{x}^* = \myvec{f}'(\myvec{z}_{-1})^{-1}[\myvec{f}'(\myvec{z}_{-1})(\myvec{y}_0 - \myvec{x}^*)] < \myvec{0}$,
  which gives $\myvec{y}_0 < \myvec{x}^*$ and so $\myvec{z}_0 = (\myvec{x}_0 + \myvec{y}_0)/2 < \myvec{x}^*$.
  Thus, $\myvec{f}'(\myvec{z}_0)$ is a nonsingular $M$-matrix and $\myvec{f}'(\myvec{z}_0)^{-1} \geq 0$
  by Lemma \ref{lem:NonsingularMmatrix}. 
  Since $\myvec{z}_{-1} = \myvec{x}_0 < \myvec{z}_0 < \myvec{x}^*$ and
  $$
    \myvec{x}_1 - \myvec{y}_0
      = [\myvec{f}'(\myvec{z}_{-1})^{-1} - \myvec{f}'(\myvec{z}_0)^{-1}]\myvec{f}(\myvec{x}_0),
  $$
  it follows from Lemma \ref{lem:f'(x)-1<f'(y)-1} that $\myvec{x}_1 - \myvec{y}_0 > \myvec{0}$, 
  i.e., $\myvec{x}_1 > \myvec{y}_0$.
  From \eqref{it:TSMNM} we infer that 
  \begin{align}
    \myvec{f}'(\myvec{z}_0)(\myvec{x}_1 - \myvec{x}^*)
    &= \myvec{f}'(\myvec{z}_0)(\myvec{x}_1 - \myvec{x}_0) 
      + \myvec{f}'(\myvec{z}_0)(\myvec{x}_0 - \myvec{z}_0) 
      + \myvec{f}'(\myvec{z}_0)(\myvec{z}_0 - \myvec{x}^*) \nonumber \\
    &= -\myvec{f}(\myvec{x}_0) + \myvec{f}'(\myvec{z}_0)(\myvec{x}_0 - \myvec{z}_0) 
    + \myvec{f}'(\myvec{z}_0)(\myvec{z}_0 - \myvec{x}^*) \label{eq:f'(z0)(x1-x*)}.
  \end{align}
  By Taylor's expansion \eqref{eq:TaylorExpansion} again, we have
  $$
  \myvec{f}(\myvec{x}_0) = \myvec{f}(\myvec{z}_0)
    + \myvec{f}'(\myvec{z}_0)(\myvec{x}_0 - \myvec{z}_0)
    + \frac{1}{2}\myvec{f}''(\myvec{z}_0)(\myvec{x}_0 - \myvec{z}_0)(\myvec{x}_0 - \myvec{z}_0)
  $$
  and
  $$
  \myvec{0} = \myvec{f}(\myvec{x}^*) = \myvec{f}(\myvec{z}_0) + \myvec{f}'(\myvec{z}_0)(\myvec{x}^* - \myvec{z}_0)
    + \frac{1}{2}\myvec{f}''(\myvec{z}_0)(\myvec{x}^* - \myvec{z}_0)(\myvec{x}^* - \myvec{z}_0).
  $$
  Substituting these expansions into \eqref{eq:f'(z0)(x1-x*)} gives
  $$
  \myvec{f}'(\myvec{z}_0)(\myvec{x}_1 - \myvec{x}^*)
    = \frac{1}{2}[\myvec{f}''(\myvec{z}_0)(\myvec{x}^* - \myvec{z}_0)(\myvec{x}^* - \myvec{z}_0) - \myvec{f}''(\myvec{z}_0)(\myvec{x}_0 - \myvec{z}_0)(\myvec{x}_0 - \myvec{z}_0)].
  $$
  Note that 
  $\myvec{z}_0 - \myvec{x}_0 
    = (\myvec{y}_0 - \myvec{x}_0)/2 < (\myvec{x}^* - \myvec{x}_0)/2 < \myvec{x}^* - \myvec{z}_0$.
  It follows from the first inequality in \eqref{ineq:f''(x)h1h2h3} that 
  $$
  \myvec{f}'(\myvec{z}_0)(\myvec{x}_1 - \myvec{x}^*) < \myvec{0}.
  $$
  Then we have 
  $\myvec{x}_1 - \myvec{x}^* 
    = \myvec{f}'(\myvec{z}_0)^{-1}[\myvec{f}'(\myvec{z}_0)(\myvec{x}_1 - \myvec{x}^*)] < \myvec{0}$,
  which is equivalent to $\myvec{x}_1 < \myvec{x}^*$. 
  Hence we have that $\myvec{0} \leq \myvec{x}_0 < \myvec{z}_0 < \myvec{y}_0 < \myvec{x}_1 < \myvec{x}^*$.
  This completes the proof of the base case.
  
  Now, suppose that the statements (i) and (ii) are true for $k$th iteration. That is, we assume that 
  $$
  \myvec{0} \leq \myvec{x}_k < \myvec{z}_k < \myvec{y}_k < \myvec{x}_{k+1} < \myvec{x}^*
  $$
  holds for some $k \geq 0$. Let us consider iteration $k+1$. 
  By induction hypothesis, 
  it follows from Lemmas \ref{lem:f'invertibility} and \ref{lem:NonsingularMmatrix} that 
  $\myvec{f}'(\myvec{z}_k)$ is a nonsingular $M$-matrix and $\myvec{f}'(\myvec{z}_k)^{-1} \geq 0$.
  Then, we can apply \eqref{it:TSMNM} and Taylor's expansion \eqref{eq:TaylorExpansion} to get
  \begin{align*}
    \myvec{f}(\myvec{x}_{k+1})
    &= \myvec{f}(\myvec{z}_k) + \myvec{f}'(\myvec{z}_k)(\myvec{x}_{k+1} - \myvec{z}_k) 
      + \frac{1}{2}\myvec{f}''(\myvec{z}_k)(\myvec{x}_{k+1} - \myvec{z}_k)(\myvec{x}_{k+1} - \myvec{z}_k) \\
    &= \myvec{f}(\myvec{z}_k) + \myvec{f}'(\myvec{z}_k)(\myvec{x}_{k+1} - \myvec{x}_k)
      + \myvec{f}'(\myvec{z}_k)(\myvec{x}_{k} - \myvec{z}_k) \\
    &\quad + \frac{1}{2}\myvec{f}''(\myvec{z}_k)(\myvec{x}_{k+1} - \myvec{z}_k)(\myvec{x}_{k+1} - \myvec{z}_k) \\
    &= \myvec{f}(\myvec{z}_k) - \myvec{f}(\myvec{x}_k) + \myvec{f}'(\myvec{z}_k)(\myvec{x}_{k} - \myvec{z}_k)
      + \frac{1}{2}\myvec{f}''(\myvec{z}_k)(\myvec{x}_{k+1} - \myvec{z}_k)(\myvec{x}_{k+1} - \myvec{z}_k) \\
    &= \frac{1}{2}\left[\myvec{f}''(\myvec{z}_k)(\myvec{x}_{k+1} - \myvec{z}_k)(\myvec{x}_{k+1} - \myvec{z}_k)
      - \myvec{f}''(\myvec{z}_k)(\myvec{x}_{k} - \myvec{z}_k)(\myvec{x}_{k} - \myvec{z}_k)\right].
  \end{align*}
  Note that $\myvec{z}_k = (\myvec{x}_k + \myvec{y}_k)/2$. We have
  $$
  \myvec{z}_k - \myvec{x}_k < (\myvec{x}_{k+1} - \myvec{x}_k)/2 
    < [(\myvec{x}_{k+1}-\myvec{x}_k) + (\myvec{x}_{k+1} - \myvec{y}_k)]/2 
    = \myvec{x}_{k+1} - \myvec{z}_k.
  $$
  Combining this with the first inequality in \eqref{ineq:f''(x)h1h2h3}, we obtain
  $$
  \myvec{f}(\myvec{x}_{k+1})
    = \frac{1}{2}\left[\myvec{f}''(\myvec{z}_k)(\myvec{x}_{k+1} - \myvec{z}_k)(\myvec{x}_{k+1} - \myvec{z}_k)
    - \myvec{f}''(\myvec{z}_k)(\myvec{x}_{k} - \myvec{z}_k)(\myvec{x}_{k} - \myvec{z}_k)\right] < \myvec{0}.
  $$
  Hence $\myvec{f}'(\myvec{z}_k)(\myvec{y}_{k+1} - \myvec{x}_{k+1}) = -\myvec{f}(\myvec{x}_{k+1}) > \myvec{0}$.
  In view of $\myvec{f}'(\myvec{z}_k)^{-1} \geq 0$, it holds
  $$
  \myvec{y}_{k+1} - \myvec{x}_{k+1}
    = \myvec{f}'(\myvec{z}_k)^{-1}[\myvec{f}'(\myvec{z}_k)(\myvec{y}_{k+1} - \myvec{x}_{k+1})]
    > \myvec{0},
  $$
  This means that 
  $\myvec{x}_{k+1} < \myvec{z}_{k+1} = (\myvec{x}_{k+1} + \myvec{y}_{k+1})/2 < \myvec{y}_{k+1}$.
  Applying the inductive hypothesis, we have
  \begin{align}
    \myvec{f}'(\myvec{z}_k)(\myvec{y}_{k+1} - \myvec{x}^*)
      &= \myvec{f}'(\myvec{z}_k)(\myvec{y}_{k+1} - \myvec{x}_{k+1})
       + \myvec{f}'(\myvec{z}_k)(\myvec{x}_{k+1} - \myvec{z}_k)
       + \myvec{f}'(\myvec{z}_k)(\myvec{z}_{k} - \myvec{x}^*) \nonumber \\
      &= -\myvec{f}(\myvec{x}_{k+1}) + \myvec{f}'(\myvec{z}_k)(\myvec{x}_{k+1} - \myvec{z}_k)
       + \myvec{f}'(\myvec{z}_k)(\myvec{z}_{k} - \myvec{x}^*). \label{eq:f'(z_k)(y_k+1-x*)}
  \end{align}
  We use Taylor's expansion \eqref{eq:TaylorExpansion} to get
  $$
  \myvec{0} = \myvec{f}(\myvec{x}^*)
    = \myvec{f}(\myvec{z}_k) + \myvec{f}'(\myvec{z}_k)(\myvec{x}^* - \myvec{z}_k)
      + \frac{1}{2}\myvec{f}''(\myvec{z}_k)(\myvec{x}^* - \myvec{z}_k)(\myvec{x}^* - \myvec{z}_k)
  $$
  and
  $$
  \myvec{f}(\myvec{x}_{k+1})
    = \myvec{f}(\myvec{z}_k) + \myvec{f}'(\myvec{z}_k)(\myvec{x}_{k+1} - \myvec{z}_k)
      + \frac{1}{2}\myvec{f}''(\myvec{z}_k)(\myvec{x}_{k+1} - \myvec{z}_k)(\myvec{x}_{k+1} - \myvec{z}_k).
  $$
  Note that $\myvec{x}_{k+1} - \myvec{z}_k < \myvec{x}^* - \myvec{z}_k$.
  By substituting these expansions into \eqref{eq:f'(z_k)(y_k+1-x*)}, 
  we conclude from the first inequality in \eqref{ineq:f''(x)h1h2h3} that
  \begin{align*}
    \myvec{f}'(\myvec{z}_k)(\myvec{y}_{k+1} - \myvec{x}^*)
    &= \frac{1}{2}\left[\myvec{f}''(\myvec{z}_k)(\myvec{x}^* - \myvec{z}_k)(\myvec{x}^* - \myvec{z}_k)
      - \myvec{f}''(\myvec{z}_k)(\myvec{x}_{k+1} - \myvec{z}_k)(\myvec{x}_{k+1} - \myvec{z}_k)\right] \\
    & < \myvec{0}.
  \end{align*}
  This together with $\myvec{f}'(\myvec{z}_k)^{-1} \geq 0$ gives 
  $$
  \myvec{y}_{k+1} - \myvec{x}^* 
    = \myvec{f}'(\myvec{z}_k)^{-1}[\myvec{f}'(\myvec{z}_k)(\myvec{y}_{k+1} - \myvec{x}^*)]
    < \myvec{0},
  $$
  and so $\myvec{z}_{k+1} < \myvec{y}_{k+1} < \myvec{x}^*$.
  Thus, $\myvec{f}'(\myvec{z}_{k+1})$ is a nonsingular $M$-matrix and $\myvec{f}'(\myvec{z}_{k+1})^{-1} \geq 0$
  due to Lemmas \ref{lem:f'invertibility} and \ref{lem:NonsingularMmatrix}.
  Further, by Lemma \ref{lem:f'(x)-1<f'(y)-1}, we have
  $$
  \myvec{x}_{k+2} - \myvec{y}_{k+1}
    = [\myvec{f}'(\myvec{z}_{k})^{-1} - \myvec{f}'(\myvec{z}_{k+1})^{-1}]\myvec{f}(\myvec{x}_{k+1}) 
    > \myvec{0},
  $$
  which gives $\myvec{y}_{k+1} < \myvec{x}_{k+2}$.
  To complete the induction, it suffices to show that $\myvec{x}_{k+2} < \myvec{x}^*$.
  To this end, we first observe from Taylor's expansion \eqref{eq:TaylorExpansion} that
  $$
  \myvec{f}'(\myvec{z}_{k+1})(\myvec{x}_{k+1} - \myvec{z}_{k+1})
    = \myvec{f}(\myvec{x}_{k+1}) - \myvec{f}(\myvec{z}_{k+1})
    - \frac{1}{2}\myvec{f}''(\myvec{z}_{k+1})(\myvec{x}_{k+1} - \myvec{z}_{k+1})(\myvec{x}_{k+1} - \myvec{z}_{k+1})
  $$
  and
  $$
  \myvec{f}'(\myvec{z}_{k+1})(\myvec{z}_{k+1} - \myvec{x}^*)
    = \myvec{f}(\myvec{z}_{k+1}) 
    + \frac{1}{2}\myvec{f}''(\myvec{z}_{k+1})(\myvec{x}^* - \myvec{z}_{k+1})(\myvec{x}^* - \myvec{z}_{k+1}).
  $$
  By noting that
  $\myvec{x}_{k+2} - \myvec{x}^* 
    = (\myvec{x}_{k+2} - \myvec{x}_{k+1}) + (\myvec{x}_{k+1} - \myvec{z}_{k+1})
      + (\myvec{z}_{k+1} - \myvec{x}^*)$, it follows from \eqref{it:TSMNM} that
  \begin{align*}
    \lefteqn{\myvec{f}'(\myvec{z}_{k+1})(\myvec{x}_{k+2} - \myvec{x}^*)
    = -\myvec{f}(\myvec{x}_{k+1}) + \myvec{f}'(\myvec{z}_{k+1})(\myvec{x}_{k+1} - \myvec{z}_{k+1})
      + \myvec{f}'(\myvec{z}_{k+1})(\myvec{z}_{k+1} - \myvec{x}^*) } \\
    &= \frac{1}{2} \left[\myvec{f}''(\myvec{z}_{k+1})(\myvec{x}^* - \myvec{z}_{k+1})(\myvec{x}^* - \myvec{z}_{k+1})
      - \myvec{f}''(\myvec{z}_{k+1})(\myvec{x}_{k+1} - \myvec{z}_{k+1})(\myvec{x}_{k+1} - \myvec{z}_{k+1})\right].
  \end{align*}
  Recall that $\myvec{z}_{k+1} = (\myvec{x}_{k+1} + \myvec{y}_{k+1})/2$. We have
  $$
  \myvec{z}_{k+1} - \myvec{x}_{k+1}
    < (\myvec{x}^* - \myvec{x}_{k+1})/2
    < (\myvec{x}^* - \myvec{x}_{k+1} + \myvec{x}^* - \myvec{y}_{k+1})/2
    = \myvec{x}^* - \myvec{z}_{k+1}.
  $$
  Then the first inequality in \eqref{ineq:f''(x)h1h2h3} is applicable to obtain
  $$
  \myvec{f}'(\myvec{z}_{k+1})(\myvec{x}_{k+2} - \myvec{x}^*) < \myvec{0}.
  $$
  This implies that 
  $\myvec{x}_{k+2} - \myvec{x}^* 
    = \myvec{f}'(\myvec{z}_{k+1})^{-1}[\myvec{f}'(\myvec{z}_{k+1})(\myvec{x}_{k+2} - \myvec{x}^*)]
    < \myvec{0}$.
  Hence we arrive at 
  $\myvec{0} < \myvec{x}_{k+1} < \myvec{z}_{k+1} < \myvec{y}_{k+1} < \myvec{x}_{k+2} < \myvec{x}^*$.
  That is, the statements (i) and (ii) are true for the case $k+1$.
  Therefore, the statements (i) and (ii) hold for all $k \geq 0$ by induction.
  
  To prove the statement (iii), we first observe from statement (ii) that
  the positive sequences $\{\myvec{x}_k\}$ increases monotonically
  and is bounded above by $\myvec{x}^*$. 
  Then there exists a nonnegative vector $\myvec{x}^{**} \in \RS^{2n}$ such that 
  $\mylim_{k \to \infty} \myvec{x}_k = \myvec{x}^{**}$ and $\myvec{x}^{**} \leq \myvec{x}^*$.
  Letting $k \to \infty$ in \eqref{it:TSMNM}, 
  we know that $\myvec{x}^{**}$ is also a positive solution of the equation \eqref{eq:f(x)=0}
  and $\myvec{x}^* \leq \myvec{x}^{**}$.
  Consequently, we have $\myvec{x}^{**} = \myvec{x}^*$.
  It follows from the statement (ii) again that
  $\mylim_{k \to \infty} \myvec{y}_k = \mylim_{k \to \infty} \myvec{z}_k = \myvec{x}^{*}$.
  This completes the proof of the theorem.
\end{proof}

\begin{myremark}
  \label{remark:MonotoneConvergence}
  The condition $\myvec{f}(\myvec{x}_0) < \myvec{0}$ in Theorem \ref{th:MonotoneConvergence} 
  can be easily verified. For example, we can choose $\myvec{x}_0 = \myvec{0}$ or $\myvec{e} \in \RS^{2n}$.
\end{myremark}
  
Next, we consider the convergence rate of the two-step modified Newton method \eqref{it:TSMNM}.
It is well-known that the Newton-Kantorovich theorem \cite{KantorvichA1982} 
guarantees local quadratic convergence of Newton's method in Banach spaces, 
provided that the Jacobian $\myvec{f}'$ is Lipschitz continuous with constant $L_J$, 
and the quantity 
\begin{equation}
  \label{cons:beta}
  \beta := \|\myvec{f}'(\myvec{x}_0)^{-1}\myvec{f}(\myvec{x}_0)\|
\end{equation}
is small enough in the sense that $L_J\beta \leq 1/2$.
This result was extended by Potra \cite{Potra2017} to the two-step modified Newton method \eqref{it:TSMNM}, 
where convergence is guaranteed under the more restrictive condition $L_J\beta \leq 1/3$.
Inequalities of this form are generally referred to as the Kantorovich-type convergence criteria, 
as they determine whether the iterative methods will converge from a given initial guess.

Clearly, it holds from (ii) in Theorem \ref{th:MonotoneConvergence} that
$$
\|\myvec{x}^* - \myvec{y}_k\| \leq \|\myvec{x}^* - \myvec{x}_k\| \quad \text{for all $k \geq 0$}.
$$
Moreover, for the case when the Jacobian matrix $\myvec{f}'(\myvec{x}^*)$ is nonsingular, 
i.e., $\alpha \neq 0$ or $c \neq 1$,
we conclude from \eqref{eq:f''(x)h1h2} that the Jacobian of $\myvec{f}$ is Lipschitz continuous.
Specifically, we choose initial points $\myvec{x}_0 = \myvec{0} \in \RS^{2n}$. 
Then we have $\myvec{f}(\myvec{x}_0) = - \myvec{e}$ and $\myvec{f}'(\myvec{x}_0) = I_{2n}$.
Hence, the quantity $\beta$ defined in \eqref{cons:beta} becomes
$$
\beta = \|\myvec{f}'(\myvec{x}_0)^{-1}\myvec{f}(\myvec{x}_0)\|_{\infty}
  = \|\myvec{e}\|_{\infty} = 1.
$$
Besides, for any $\myvec{x}, \myvec{y} \in \RS^{2n}$, it follows from \eqref{eq:Jacobian_f} that
\begin{align*}
  \|\myvec{f}'(\myvec{x}_0)^{-1}[\myvec{f}'(\myvec{x}) - \myvec{f}'(y)]\|_{\infty}
  &= \|G(\myvec{x}) - G(\myvec{y})\|_{\infty} \\
  &\leq 2 \max_{1 \leq i \leq n}\left\{\sum_{j=1}^n p_{ij}, \sum_{j=1}^n \tilde{p}_{ij}\right\} 
    \cdot \|\myvec{x} - \myvec{y}\|_{\infty}.
\end{align*}
By \cite[Lemma 3]{Lu2005a}, Lu deduced that $\mysum_{j=1}^n p_{ij} < c(1-\alpha)/2$
and $\mysum_{j=1}^n \tilde{p}_{ij} < c(1+\alpha)/2$.
By making use of the above inequalities, we have
$$
\|\myvec{f}'(\myvec{x}_0)^{-1}[\myvec{f}'(\myvec{x}) - \myvec{f}'(y)]\|_{\infty}
  < c(1 + \alpha) \cdot \|\myvec{x} - \myvec{y}\|_{\infty}.
$$
This means that the Jacobian of $\myvec{f}$ is Lipschitz continuous 
with Lipschitz constant $L_J = c(1 + \alpha)$.
Consequently, the convergence criterion $L_J\beta \leq 1/3$ for the two-step modified Newton \eqref{it:TSMNM},
when applied to the nonlinear equation \eqref{eq:f(x)=0}, reduces to $c(1+\alpha) \leq 1/3$.
Therefore, Theorem \ref{th:MonotoneConvergence} and the convergence results in \cite[Theorem 2.7]{Potra2017}
are applicable to conclude the following corollary.
  
\begin{mycorollary}
  \label{cor:NonsingularQuadraticConvergence}
  Let $\myvec{x}^* \in \RS^{2n}$ be the minimal positive solution of the nonlinear system \eqref{eq:f(x)=0}
  such that the Jacobian matrix $\myvec{f}'(\myvec{x}^*)$ is nonsingular, i.e., $\alpha \neq 0$ or $c \neq 1$.
  If $c(1+\alpha) \leq 1/3$,
  then the iterative sequence $\{\myvec{x}_k\}$
  generated by the two-step modified Newton method \eqref{it:TSMNM}
  starting from the zero vector $\myvec{0} \in \RS^{2n}$ converges
  Q-quadratically to $\myvec{x}^*$, and the following error bound holds:
  $$
  \|\myvec{x}^* - \myvec{x}_k\|_{\infty} < 1.32c(1+\alpha) \|\myvec{x}_k - \myvec{x}_{k-1}\|^2_{\infty}, \quad k \geq 1.
  $$
  Moreover, the minimal positive solution $\myvec{x}^*$  belongs to the open ball $\ball(\myvec{0},r)$,
  where
  $$
  \frac{1 - \sqrt{1 - 2c(1+\alpha)}}{c(1+\alpha)} \leq r < \frac{1 + \sqrt{1 - 2c(1+\alpha)}}{c(1+\alpha)}.
  $$
\end{mycorollary}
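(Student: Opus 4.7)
The corollary is essentially an assembly result: all the analytic work (the monotone convergence framework, the derivative Lipschitz bound, the computation of the Kantorovich constant) has already been carried out in the excerpt immediately preceding the statement, so my plan is to verify hypotheses and invoke the two pre-existing results rather than re-do any nontrivial analysis.

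First, I would dispatch the monotonicity half. Starting at $\myvec{x}_0 = \myvec{0}$, one has $\myvec{f}(\myvec{0}) = -\myvec{e} < \myvec{0}$ directly from the definition of $\myvec{f}$ in \eqref{eq:f(x)=0}, and of course $\myvec{0} \leq \myvec{x}_0 < \myvec{x}^*$ since $\myvec{x}^*$ is the minimal positive solution. This is precisely the hypothesis of Theorem \ref{th:MonotoneConvergence}, so the three sequences $\{\myvec{x}_k\}$, $\{\myvec{y}_k\}$, $\{\myvec{z}_k\}$ are well-defined, they satisfy $\myvec{0} \leq \myvec{x}_k < \myvec{z}_k < \myvec{y}_k < \myvec{x}_{k+1} < \myvec{x}^*$, and they converge monotonically to $\myvec{x}^*$.

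Next, I would check that the Kantorovich-type criterion of \cite[Theorem 2.7]{Potra2017} holds with the specific data of our problem. The text just above the corollary already records the two necessary ingredients: with $\myvec{x}_0 = \myvec{0}$ one has $\myvec{f}'(\myvec{x}_0) = I_{2n}$ and $\myvec{f}(\myvec{x}_0) = -\myvec{e}$, so $\beta = \|\myvec{f}'(\myvec{x}_0)^{-1}\myvec{f}(\myvec{x}_0)\|_\infty = 1$; and using \eqref{eq:Jacobian_f} together with Lu's row-sum bounds $\sum_j p_{ij} < c(1-\alpha)/2$ and $\sum_j \widetilde{p}_{ij} < c(1+\alpha)/2$, the Jacobian is Lipschitz with constant $L_J = c(1+\alpha)$. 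Since the nonsingularity hypothesis $\alpha \neq 0$ or $c \neq 1$ places us in the regular regime, Potra's theorem applies provided $L_J\beta \leq 1/3$, which is exactly the stated condition $c(1+\alpha) \leq 1/3$.

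Finally I would read off the conclusions from \cite[Theorem 2.7]{Potra2017}: Q-quadratic convergence of $\{\myvec{x}_k\}$ to $\myvec{x}^*$, the explicit error estimate (where the constant $1.32$ comes from Potra's constant evaluated at our $L_J,\beta$), and the enclosure of $\myvec{x}^*$ in the annulus determined by the two roots of the majorizing quadratic $L_J t^2/2 - t + \beta$, which gives the stated bounds on $r$. The only thing that really needs care is confirming that the expressions $1.32\,c(1+\alpha)$ and $(1 \pm \sqrt{1-2c(1+\alpha)})/(c(1+\alpha))$ match Potra's generic formulas when one substitutes $\beta = 1$ and $L_J = c(1+\alpha)$; this is a routine but delicate bookkeeping step and is the only place where an arithmetic slip could occur, so I would do it explicitly to pin down the numerical constant.
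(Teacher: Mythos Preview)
Your proposal is correct and matches the paper's approach exactly: the paper does not give a separate proof of the corollary but states it as an immediate consequence of Theorem~\ref{th:MonotoneConvergence} together with \cite[Theorem 2.7]{Potra2017}, after the preceding paragraph has computed $\beta=1$ and $L_J=c(1+\alpha)$ and reduced Potra's criterion to $c(1+\alpha)\le 1/3$. Your plan to verify the hypotheses of both results and then read off the conclusions, including the bookkeeping for the constants, is precisely what the paper intends.
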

  
\begin{myremark}
  Corollary \ref{cor:NonsingularQuadraticConvergence} implies that $0 < \|\myvec{x}^*\| \leq r$,
  which coincides with the one given in \cite[Theorem 4.1]{BaiGL2008}.
\end{myremark}

\subsection{The convergence rates at singular roots}

For the case when the Jacobian matrix $\myvec{f}'(\myvec{x}^*)$ is singular, 
i.e., $\alpha = 0$ and $c = 1$,
we will encounter new difficulties in investigating the convergence rates 
for the two-step modified Newton method \eqref{it:TSMNM}.
These difficulties primarily arise from the existence of a family of codimension-one manifolds 
through $\myvec{x}^*$ where $\myvec{f}'(\myvec{x})$ is singular. 
See \cite{Keller1981,DeckerK1985} for more details.
As a result, selecting initial guesses from a region surrounding $\myvec{x}^*$, 
where the invertibility of $\myvec{f}'(\myvec{x})$ is guaranteed, becomes essential. 
Moreover, we must demonstrate that subsequent iterates are well-defined, 
ensuring they remain within a region of invertibility.

Following the techniques used in much of the literature on singular problems 
(see, e.g., \cite{Reddien1978,DeckerK1980a,DeckerK1980b,DeckerKK1983,Kelley1986,KelleyX1993,Argyros1999,GuoL2000b,HuangKH2010,DallasP2023,Mannel2023}),
we let 
$$
\mathcal{N} = \ker(\myvec{f}'(\myvec{x}^*)) \quad \text{and} \quad \mathcal{R} = \range(\myvec{f}'(\myvec{x}^*)).
$$
Then $P_{\mathcal{N}}$ and $P_{\mathcal{R}}$ are the orthogonal projections onto 
$\mathcal{N}$ and $\mathcal{R}$, respectively.
It follows from \cite[Lemma 3.4]{HuangKH2010} that 
$\dim(\mathcal{N}) = 1, \RS^{2n} = \mathcal{N} \oplus \mathcal{R}$, 
$I = P_{\mathcal{N}} + P_{\mathcal{R}}$ and 
the restriction operator $\myvec{f}'(\myvec{x}^*)\big|_{\mathcal{R}}$ is invertiable on $\mathcal{R}$.
In addition, we define
\begin{equation}
  \label{set:K}
  \mathcal{K}(\omega) = \left\{k \in \NS \mid \|P_{\mathcal{N}}(\myvec{x}_k - \myvec{x}^*)\| 
    < \omega \|P_{\mathcal{R}}(\myvec{x}_k - \myvec{x}^*)\|\right\}
\end{equation}
and
\begin{equation}
  \label{set:W}
  \mathcal{W}(r,\theta) = \left\{\myvec{x} \in \RS^{2n} \mid \|\myvec{x} - \myvec{x}^*\| < r,
  \|P_{\mathcal{R}}(\myvec{x} - \myvec{x}^*)\| 
    \leq \theta \|P_{\mathcal{N}}(\myvec{x} - \myvec{x}^*)\|\right\}
\end{equation}
for $\omega, r, \theta > 0$ sufficiently small.

\begin{mytheorem}
  \label{th:SingularConvergenceSetK}
  Assume that $\myvec{f}'(\myvec{x}^*)$ is singular, i.e., $\alpha = 0$ and $c = 1$.
  Let $\{\myvec{x}_k\}, \{\myvec{y}_k\}$ and $\{\myvec{z}_k\}$ 
  be the sequences generated by the two-step modified Newton method \eqref{it:TSMNM}
  with an appropriate initial guess $\myvec{x}_0 \in \RS^{2n}$.
  If the index set $\mathcal{K}(\omega)$ defined by \eqref{set:K} is an infinite set for some $\omega > 0$,
  then the following error bound
  \begin{equation}
    \label{ineq:ConvergenceRateSetK}
    \|\myvec{x}^* - \myvec{x}_{k+1}\| \leq \eta \|\myvec{x}^* - \myvec{x}_k\|^2
  \end{equation}
  holds for all $k + 1 \in \mathcal{K}(\omega)$ large enough,
  where 
  $\eta = (1+\omega)\|(\myvec{f}'(\myvec{x}^*)\big|_{\mathcal{R}})^{-1}\|\|\myvec{f}''(\myvec{x}^*)\|$.
\end{mytheorem}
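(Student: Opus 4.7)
The plan is to exploit a structural fact about this problem that was already observed in connection with \eqref{eq:TaylorExpansion}: because $\myvec{f}'''$ is the null operator, Taylor's formula is exact with a constant Hessian, and $\myvec{f}'$ is affine in its argument. This should reduce the proof to one clean algebraic identity, which is then projected onto the invariant subspace $\mathcal{R}$ where $\myvec{f}'(\myvec{x}^*)\big|_{\mathcal{R}}$ is invertible.

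\emph{Step 1 (a single-step error identity).} Writing $\myvec{d}_k := \myvec{x}_k - \myvec{x}^*$, $\myvec{d}^y_k := \myvec{y}_k - \myvec{x}^*$, and $\myvec{d}^z_k := \myvec{z}_k - \myvec{x}^* = \tfrac{1}{2}(\myvec{d}_k + \myvec{d}^y_k)$, the second line of the scheme \eqref{it:TSMNM} gives $\myvec{f}'(\myvec{z}_k)\myvec{d}_{k+1} = \myvec{f}'(\myvec{z}_k)\myvec{d}_k - \myvec{f}(\myvec{x}_k)$. Substituting the affine form $\myvec{f}'(\myvec{z}_k) = \myvec{f}'(\myvec{x}^*) + \myvec{f}''(\myvec{x}^*)\myvec{d}^z_k$ and the exact Taylor expansion $\myvec{f}(\myvec{x}_k) = \myvec{f}'(\myvec{x}^*)\myvec{d}_k + \tfrac{1}{2}\myvec{f}''(\myvec{x}^*)\myvec{d}_k\myvec{d}_k$, and using the bilinearity of $\myvec{f}''$ together with the relation $2\myvec{d}^z_k - \myvec{d}_k = \myvec{d}^y_k$, the first-order terms cancel and everything collapses to
$$
\myvec{f}'(\myvec{z}_k)\,\myvec{d}_{k+1} \;=\; \tfrac{1}{2}\,\myvec{f}''(\myvec{x}^*)\,\myvec{d}^y_k\,\myvec{d}_k.
$$
This identity is the analytical heart of the proof; it does not require $\myvec{f}'(\myvec{z}_k)$ to be invertible, so well-definedness of the iterates can be handled separately through Theorem \ref{th:MonotoneConvergence}(i) under the monotone-convergence initialization.

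\emph{Step 2 (projection onto $\mathcal{R}$).} Applying $P_{\mathcal{R}}$ to both sides of the identity and using $P_{\mathcal{R}}\myvec{f}'(\myvec{x}^*) = \myvec{f}'(\myvec{x}^*)$ together with the same decomposition of $\myvec{f}'(\myvec{z}_k)$, then inverting $\myvec{f}'(\myvec{x}^*)\big|_{\mathcal{R}}$ on $\mathcal{R}$, produces a bound of the form
$$
\|P_{\mathcal{R}}\myvec{d}_{k+1}\| \;\le\; M\bigl[\tfrac{1}{2}\|\myvec{d}^y_k\|\|\myvec{d}_k\| + \|\myvec{d}^z_k\|\|\myvec{d}_{k+1}\|\bigr], \qquad M := \|(\myvec{f}'(\myvec{x}^*)\big|_{\mathcal{R}})^{-1}\|\,\|\myvec{f}''(\myvec{x}^*)\|.
$$
The membership $k+1 \in \mathcal{K}(\omega)$ then gives, via the triangle inequality, $\|\myvec{d}_{k+1}\| \le (1+\omega)\|P_{\mathcal{R}}\myvec{d}_{k+1}\|$, which is the decisive tool for absorbing the self-coupling term on the right-hand side back into the left.

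The main obstacle is precisely that self-coupling term $P_{\mathcal{R}}\myvec{f}''(\myvec{x}^*)\myvec{d}^z_k\myvec{d}_{k+1}$, whose presence makes the bound implicit in $\myvec{d}_{k+1}$. Absorbing it cleanly requires both the conversion factor $(1+\omega)$ supplied by $\mathcal{K}(\omega)$ and the smallness of $\|\myvec{d}^z_k\|$, which by Theorem \ref{th:MonotoneConvergence}(iii) tends to zero. For $k$ large enough that $M(1+\omega)\|\myvec{d}^z_k\| \le 1/2$, combined with the componentwise monotonicity bound $\|\myvec{d}^y_k\| \le \|\myvec{d}_k\|$ from Theorem \ref{th:MonotoneConvergence}(ii) (valid for any monotone vector norm), the loop closes: the prefactor collapses to $M$, and after multiplying by $(1+\omega)$ one obtains $\|\myvec{d}_{k+1}\| \le (1+\omega)M\|\myvec{d}_k\|^2 = \eta\|\myvec{d}_k\|^2$, which is exactly the desired estimate \eqref{ineq:ConvergenceRateSetK}.
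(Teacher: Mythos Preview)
Your proof is correct and reaches the stated constant $\eta$ exactly, but it proceeds along a different line from the paper's argument. The paper pivots on the residual $\myvec{f}(\myvec{x}_{k+1})$: it first derives a \emph{lower} bound of the form
\[
\|\myvec{f}(\myvec{x}_{k+1})\| \;\ge\; \Bigl[(1+\omega)\bigl\|(\myvec{f}'(\myvec{x}^*)\big|_{\mathcal{R}})^{-1}\bigr\|\Bigr]^{-1}\|\myvec{d}_{k+1}\| - \tfrac12\|\myvec{f}''(\myvec{x}^*)\|\,\|\myvec{d}_{k+1}\|^2
\]
via the matrix lower bound (Lemma~\ref{lem:MatrixLowerBound}) together with the $\mathcal{K}(\omega)$ inequality, and then obtains an \emph{upper} bound $\|\myvec{f}(\myvec{x}_{k+1})\| \le \tfrac12\|\myvec{f}''(\myvec{x}^*)\|(\|\myvec{d}_{k+1}\|^2 + \|\myvec{d}_k\|^2)$ by rewriting $\myvec{f}(\myvec{x}_{k+1})$ through the iteration scheme and invoking the componentwise sign properties \eqref{ineq:f''(x)h1h2h3} of $\myvec{f}''$. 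Sandwiching these and absorbing the quadratic self-term yields \eqref{ineq:ConvergenceRateSetK}.

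Your route instead isolates the clean error identity $\myvec{f}'(\myvec{z}_k)\myvec{d}_{k+1} = \tfrac12\myvec{f}''(\myvec{x}^*)\myvec{d}^y_k\myvec{d}_k$ (which the paper in fact derives as \eqref{eq:xk+1-x*2}, but only uses later in Lemma~\ref{lem:xk+1_in_W(r,theta)}), splits off $\myvec{f}'(\myvec{x}^*)$ from $\myvec{f}'(\myvec{z}_k)$, and inverts on $\mathcal{R}$ directly. This is more in the Decker--Kelley spirit and is arguably cleaner here: it avoids the detour through $\|\myvec{f}(\myvec{x}_{k+1})\|$ and does not need the componentwise comparison inequalities \eqref{ineq:f''(x)h1h2h3}, using from Theorem~\ref{th:MonotoneConvergence} only the norm consequence $\|\myvec{d}^y_k\|\le\|\myvec{d}_k\|$ and the convergence $\myvec{d}^z_k\to\myvec{0}$. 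One small phrasing point: what you actually use is not ``$P_{\mathcal{R}}\myvec{f}'(\myvec{x}^*)=\myvec{f}'(\myvec{x}^*)$'' applied from the left, but the decomposition $\myvec{f}'(\myvec{x}^*)\myvec{d}_{k+1}=\myvec{f}'(\myvec{x}^*)\big|_{\mathcal{R}}[P_{\mathcal{R}}\myvec{d}_{k+1}]$ (since $P_{\mathcal{N}}\myvec{d}_{k+1}\in\ker\myvec{f}'(\myvec{x}^*)$); the bound you state follows from that.
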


\begin{proof}
  For any $k \geq 0$, it follows from the Taylor expansion \eqref{eq:TaylorExpansion} that
  \begin{align*}
    \myvec{f}(\myvec{x}_{k+1})
    &= \myvec{f}(\myvec{x}^*) + \myvec{f}'(\myvec{x}^*)(\myvec{x}_{k+1} - \myvec{x}^*)
      + \frac{1}{2}\myvec{f}''(\myvec{x}^*)(\myvec{x}_{k+1} - \myvec{x}^*)(\myvec{x}_{k+1} - \myvec{x}^*) \\
    &= \myvec{f}'(\myvec{x}^*)(\myvec{x}_{k+1} - \myvec{x}^*) + 
    \frac{1}{2}\myvec{f}''(\myvec{x}^*)(\myvec{x}_{k+1} - \myvec{x}^*)(\myvec{x}_{k+1} - \myvec{x}^*).
  \end{align*}
  Recall that $\RS^{2n} = \mathcal{N} \oplus \mathcal{R}$ and $I = P_{\mathcal{N}} + P_{\mathcal{R}}$.
  We have $\myvec{f}'(\myvec{x}^*)[P_{\mathcal{N}}\myvec{x}] = \myvec{0}$ for any $\myvec{x} \in \RS^{2n}$, and so
  $$
  \myvec{f}'(\myvec{x}^*)(\myvec{x}_{k+1} - \myvec{x}^*)
    =\myvec{f}'(\myvec{x}^*)[P_{\mathcal{N}}(\myvec{x}_{k+1} - \myvec{x}^*) + P_{\mathcal{R}}(\myvec{x}_{k+1} - \myvec{x}^*)]
    = \myvec{f}'(\myvec{x}^*)\big|_{\mathcal{R}}[P_{\mathcal{R}}(\myvec{x}_{k+1} - \myvec{x}^*)].
  $$
  This leads to
  $$
  \myvec{f}(\myvec{x}_{k+1}) 
  = \myvec{f}'(\myvec{x}^*)\big|_{\mathcal{R}}[P_{\mathcal{R}}(\myvec{x}_{k+1} - \myvec{x}^*)]
    + \frac{1}{2}\myvec{f}''(\myvec{x}^*)(\myvec{x}_{k+1} - \myvec{x}^*)(\myvec{x}_{k+1} - \myvec{x}^*).
  $$
  Applying the reverse triangle inequality gives
  \begin{align}
    \|\myvec{f}(\myvec{x}_{k+1})\|
    &\geq \left\|\myvec{f}'(\myvec{x}^*)\big|_{\mathcal{R}}[P_{\mathcal{R}}(\myvec{x}_{k+1} - \myvec{x}^*)]\right\|
    - \frac{1}{2}\left\|\myvec{f}''(\myvec{x}^*)(\myvec{x}_{k+1} - \myvec{x}^*)(\myvec{x}_{k+1} - \myvec{x}^*)\right\| \nonumber \\
    &\geq \left\|\myvec{f}'(\myvec{x}^*)\big|_{\mathcal{R}}[P_{\mathcal{R}}(\myvec{x}_{k+1} - \myvec{x}^*)]\right\|
    - \frac{1}{2}\|\myvec{f}''(\myvec{x}^*)\|\|\myvec{x}_{k+1} - \myvec{x}^*\|^2. \label{ineq:norm_f(xk)}
  \end{align}
  Since $\myvec{f}'(\myvec{x}^*)\big|_{\mathcal{R}}$ is nonsingular on $\mathcal{R}$, 
  it follows from Lemma \ref{lem:MatrixLowerBound} that
  \begin{equation}
  \label{ineq:MatrixLowerBound}
  \left\|\left(\myvec{f}'(\myvec{x}^*)\big|_{\mathcal{R}}\right)^{-1}\right\|^{-1}
    \|P_{\mathcal{R}}(\myvec{x}_{k+1} - \myvec{x}^*)\|
    \leq \|\myvec{f}'(\myvec{x}^*)\big|_{\mathcal{R}}[P_{\mathcal{R}}(\myvec{x}_{k+1} - \myvec{x}^*)]\|.
  \end{equation}
  In addition, if $k+1 \in \mathcal{K}$, then
  \begin{align}
    \|\myvec{x}_{k+1} - \myvec{x}^*\|
    &\leq \|P_{\mathcal{R}}(\myvec{x}_{k+1} - \myvec{x}^*)\| 
      + \|P_{\mathcal{N}}(\myvec{x}_{k+1} - \myvec{x}^*)\| \nonumber \\
    &\leq  (1 + \omega)\|P_{\mathcal{R}}(\myvec{x}_{k+1} - \myvec{x}^*)\|. \label{ineq:norm_xk+1-x*}
  \end{align}
  By applying \eqref{ineq:MatrixLowerBound} and \eqref{ineq:norm_xk+1-x*} to \eqref{ineq:norm_f(xk)}, 
  we further derive that
  \begin{align}
    \|\myvec{f}(\myvec{x}_{k+1})\|
    &\geq \left\|\left(\myvec{f}'(\myvec{x}^*)\big|_{\mathcal{R}}\right)^{-1}\right\|^{-1}
        \|P_{\mathcal{R}}(\myvec{x}_{k+1} - \myvec{x}^*)\|
      - \frac{1}{2}\|\myvec{f}''(\myvec{x}^*)\|\|\myvec{x}_{k+1} - \myvec{x}^*\|^2 \nonumber \\
    &\geq (1+\omega)^{-1}\|\myvec{x}_{k+1} - \myvec{x}^*\| \left\|\left(\myvec{f}'(\myvec{x}^*)\big|_{\mathcal{R}}\right)^{-1}\right\|^{-1}
      - \frac{1}{2}\|\myvec{f}''(\myvec{x}^*)\|\|\myvec{x}_{k+1} - \myvec{x}^*\|^2 \nonumber \\
    &= \left(\left[(1+\omega)\left\|\left(\myvec{f}'(\myvec{x}^*)\big|_{\mathcal{R}}\right)^{-1}\right\|\right]^{-1}
      -\frac{1}{2}\|\myvec{f}''(\myvec{x}^*)\|\|\myvec{x}_{k+1} - \myvec{x}^*\|\right)\|\myvec{x}_{k+1} - \myvec{x}^*\|.
      \label{ineq:norm_f(xk+1)_lowerbound}
  \end{align}
  On the other hand, thanks to \eqref{it:TSMNM}, one has
  $$
    \myvec{f}(\myvec{x}_{k+1})  = \myvec{f}(\myvec{x}_{k+1}) 
      - \myvec{f}(\myvec{x}_k) - \myvec{f}'(\myvec{z}_k)(\myvec{x}_{k+1} - \myvec{x}_k).
  $$
  Then the Taylor expansion \eqref{eq:TaylorExpansion} is applicable again to get
  \begin{align*}
    \myvec{f}(\myvec{x}_{k+1}) 
    &= \myvec{f}'(\myvec{x}_k)(\myvec{x}_{k+1} - \myvec{x}_k)
      + \frac{1}{2}\myvec{f}''(\myvec{x}_k)(\myvec{x}_{k+1} - \myvec{x}_k)(\myvec{x}_{k+1} - \myvec{x}_k)
      - \myvec{f}'(\myvec{z}_k)(\myvec{x}_{k+1} - \myvec{x}_k) \\
    &= \myvec{f}''(\myvec{z}_k)(\myvec{x}_k - \myvec{z}_k)(\myvec{x}_{k+1} - \myvec{x}_k)
      + \frac{1}{2}\myvec{f}''(\myvec{x}_k)(\myvec{x}_{k+1} - \myvec{x}_k)(\myvec{x}_{k+1} - \myvec{x}_k) \\
    &= \frac{1}{2}\myvec{f}''(\myvec{z}_k)(\myvec{x}_k - \myvec{y}_k)(\myvec{x}_{k+1} - \myvec{x}_k)
      + \frac{1}{2}\myvec{f}''(\myvec{x}_k)(\myvec{x}_{k+1} - \myvec{x}_k)(\myvec{x}_{k+1} - \myvec{x}_k).
  \end{align*}
  Observe from Lemma \ref{lem:f''(x)hh} that $\myvec{f}''(\myvec{x})\myvec{h}\myvec{h}$ is independent of $\myvec{x}$ for any $\myvec{h} \in \RS^{2n}$.
  Then, by the third inequality in \eqref{ineq:f''(x)h1h2h3}, we further obtain that
  \begin{align*}
    \myvec{f}(\myvec{x}_{k+1}) 
    &= \frac{1}{2}\myvec{f}''(\myvec{x}^*)(\myvec{x}_{k+1} - \myvec{y}_k)(\myvec{x}_{k+1} - \myvec{x}_k) 
    > \frac{1}{2}\myvec{f}''(\myvec{x}^*)(\myvec{x}_{k+1} - \myvec{x}_k)(\myvec{x}_{k+1} - \myvec{x}_k).
  \end{align*}
  This allows us to deduce that
  \begin{align*}
    \myvec{0} < -\myvec{f}(\myvec{x}_{k+1})
    &< -\frac{1}{2}\myvec{f}''(\myvec{x}^*)(\myvec{x}_{k+1} - \myvec{x}_k)(\myvec{x}_{k+1} - \myvec{x}_k) \\
    &= -\frac{1}{2}\myvec{f}''(\myvec{x}^*)(\myvec{x}_{k+1} - \myvec{x}^*)(\myvec{x}_{k+1} - \myvec{x}^*) \\
      &\quad - \myvec{f}''(\myvec{x}^*)(\myvec{x}_{k+1} - \myvec{x}^*)(\myvec{x}^* - \myvec{x}_k) 
        - \frac{1}{2}\myvec{f}''(\myvec{x}^*)(\myvec{x}^* - \myvec{x}_k)(\myvec{x}^* - \myvec{x}_k).
  \end{align*}
  Then the second inequality in \eqref{ineq:f''(x)h1h2h3} implies that
  $$
  -\myvec{f}(\myvec{x}_{k+1})
    < -\frac{1}{2}\left[\myvec{f}''(\myvec{x}^*)(\myvec{x}_{k+1} - \myvec{x}^*)(\myvec{x}_{k+1} - \myvec{x}^*)
      + \myvec{f}''(\myvec{x}^*)(\myvec{x}^* - \myvec{x}_k)(\myvec{x}^* - \myvec{x}_k)\right],
  $$
  which yields
  \begin{equation}
    \label{ineq:norm_f(xk+1)_upperbound}
    \|\myvec{f}(\myvec{x}_{k+1})\|
    \leq \frac{1}{2}\|\myvec{f}''(\myvec{x}^*)\|
      \left(\|\myvec{x}_{k+1} - \myvec{x}^*\|^2 + \|\myvec{x}^* - \myvec{x}_k\|^2\right).
  \end{equation}
  From \eqref{ineq:norm_f(xk+1)_lowerbound} and \eqref{ineq:norm_f(xk+1)_upperbound}, 
  it follows that
  \begin{align*}
    &\left(\left[(1+\omega)\left\|\left(\myvec{f}'(\myvec{x}^*)\big|_{\mathcal{R}}\right)^{-1}\right\|\right]^{-1}
      -\frac{1}{2}\|\myvec{f}''(\myvec{x}^*)\|\|\myvec{x}_{k+1} - \myvec{x}^*\|\right)\|\myvec{x}_{k+1} - \myvec{x}^*\| \\
    &\qquad \leq \frac{1}{2}\|\myvec{f}''(\myvec{x}^*)\|
    \left(\|\myvec{x}_{k+1} - \myvec{x}^*\|^2 + \|\myvec{x}^* - \myvec{x}_k\|^2\right).
  \end{align*}
  Subtracting the first term on the right-hand side of the above inequality from both sides gives
  \begin{align}
    &\left(\left[(1+\omega)\left\|\left(\myvec{f}'(\myvec{x}^*)\big|_{\mathcal{R}}\right)^{-1}\right\|\right]^{-1}
    - \|\myvec{f}''(\myvec{x}^*)\|\|\myvec{x}_{k+1} - \myvec{x}^*\|\right)\|\myvec{x}_{k+1} - \myvec{x}^*\| \nonumber \\
    &\qquad\leq \frac{1}{2}\|\myvec{f}''(\myvec{x}^*)\|\|\myvec{x}^* - \myvec{x}_k\|^2.
    \label{ineq:norm_f''_x^-xk}
  \end{align}
  Recall that $\mylim_{k \to \infty} \myvec{x}_k = \myvec{x}^*$.
  We have for $k+1 \in \mathcal{K}$ large enough that
  $$
  \left[(1+\omega)\left\|\left(\myvec{f}'(\myvec{x}^*)\big|_{\mathcal{R}}\right)^{-1}\right\|\right]^{-1}
    - \|\myvec{f}''(\myvec{x}^*)\|\|\myvec{x}_{k+1} - \myvec{x}^*\|
    > \frac{1}{2}\left[(1+\omega)\left\|\left(\myvec{f}'(\myvec{x}^*)\big|_{\mathcal{R}}\right)^{-1}\right\|\right]^{-1}.
  $$
  This together with \eqref{ineq:norm_f''_x^-xk} permits us to arrive at the desired error bound \eqref{ineq:ConvergenceRateSetK}.
\end{proof}

\begin{myremark}
  Theorem \ref{th:SingularConvergenceSetK} 
  says that the two-step modified Newton method \eqref{it:TSMNM} is expected to exhibit
  the fast convergence behavior perpendicular to the null space directions.
\end{myremark}

Recall that $\{\myvec{x}_k\}$, $\{\myvec{y}_k\}$ and $\{\myvec{z}_k\}$ are the sequences generated 
by the two-step modified Newton method \eqref{it:TSMNM} with an appropriate initial guess $\myvec{x}_0 \in \RS^{2n}$.
To show the next theorem, we first need the following lemma, which is taken from \cite[Lemma 3.4]{HuangKH2010}.

\begin{mylemma}
  \label{lem:PNy}
  If $\myvec{f}'(\myvec{x}^*)$ is singular, 
  then there exists a nonsingular matrix $U \in \RS^{2n\times 2n}$ such that
  $$
  U^{-1} \myvec{f}'(\myvec{x}^*) U = 
  \begin{bmatrix}
    0 & \myvec{0} \\
    \myvec{0} & M_{22}
  \end{bmatrix},
  $$
  where $M_{22} \in \RS^{(2n-1)\times(2n-1)}$ is nonsingular.
  Moreover, 
  if denote by $\myvec{u}_1$ the first column of $U$ and by $\myvec{v}_1^{\top}$ the first row of $U^{-1}$, 
  then we have $\myvec{v}_1 > \myvec{0}$ or $\myvec{v}_1 < \myvec{0}$,
  and 
  \begin{equation}
    \label{eq:PNy}
    P_{\mathcal{N}}\myvec{y} = (\myvec{v}_1^{\top}\myvec{y})\myvec{u}_1 
    \quad \text{for any $\myvec{y} \in \RS^{2n}$}.
  \end{equation}
\end{mylemma}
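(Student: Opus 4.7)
The plan is to exploit the fact that, under $\alpha = 0$ and $c = 1$, the Jacobian $\myvec{f}'(\myvec{x}^*) = I_{2n} - G(\myvec{x}^*)$ is a singular $M$-matrix with $G(\myvec{x}^*)$ nonnegative and \emph{irreducible}, so the Perron--Frobenius theorem applies cleanly. Irreducibility follows from inspecting the block form in \eqref{mat:G}: the off-diagonal blocks $H_1(\myvec{u}^*)$ and $H_2(\myvec{v}^*)$ are entrywise strictly positive because $\myvec{u}^*, \myvec{v}^* > \myvec{0}$ and the columns $\myvec{p}_j, \widetilde{\myvec{p}}_j$ are positive, which makes the directed graph of $G(\myvec{x}^*)$ strongly connected. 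Combined with $\rho(G(\myvec{x}^*)) \leq 1$ from Lemma \ref{lem:f'invertibility} and the singularity of $I_{2n} - G(\myvec{x}^*)$, this forces $\rho(G(\myvec{x}^*)) = 1$. Consequently, $1$ is a \emph{simple} eigenvalue of $G(\myvec{x}^*)$ with strictly positive right and left eigenvectors, unique up to scalar multiplication, and $0$ is a simple eigenvalue of $\myvec{f}'(\myvec{x}^*)$ with $\dim \mathcal{N} = 1$ and $\RS^{2n} = \mathcal{N} \oplus \mathcal{R}$.

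Next I would build $U$ explicitly. Pick $\myvec{u}_1 > \myvec{0}$ to be the Perron right eigenvector of $G(\myvec{x}^*)$, so that $\myvec{u}_1$ spans $\mathcal{N}$, and let the columns of $U_2 \in \RS^{2n \times (2n-1)}$ form any basis of $\mathcal{R}$. Since simplicity of the eigenvalue gives $\mathcal{N} \cap \mathcal{R} = \{\myvec{0}\}$, the matrix $U = [\myvec{u}_1, U_2]$ is nonsingular. The invariance $\myvec{f}'(\myvec{x}^*)\mathcal{R} \subseteq \mathcal{R}$ yields $\myvec{f}'(\myvec{x}^*) U_2 = U_2 M_{22}$ for some $M_{22} \in \RS^{(2n-1)\times(2n-1)}$ representing the restriction $\myvec{f}'(\myvec{x}^*)\big|_{\mathcal{R}}$; since this restriction is a bijection on $\mathcal{R}$, $M_{22}$ is nonsingular. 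Combining this with $\myvec{f}'(\myvec{x}^*)\myvec{u}_1 = \myvec{0}$ produces the desired block form $U^{-1}\myvec{f}'(\myvec{x}^*) U = \diag(0, M_{22})$.

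I would then identify the first row $\myvec{v}_1^{\top}$ of $U^{-1}$. The identity $U^{-1} U = I_{2n}$ forces $\myvec{v}_1^{\top}\myvec{u}_1 = 1$ and $\myvec{v}_1^{\top} U_2 = \myvec{0}$; the latter says $\myvec{v}_1 \perp \mathcal{R}$, hence $\myvec{v}_1 \in \mathcal{R}^{\perp} = \ker(\myvec{f}'(\myvec{x}^*)^{\top})$, the one-dimensional left null space. Thus $\myvec{v}_1$ is a scalar multiple of the Perron left eigenvector of $G(\myvec{x}^*)$, which is strictly positive, and the normalization $\myvec{v}_1^{\top}\myvec{u}_1 = 1 > 0$ together with $\myvec{u}_1 > \myvec{0}$ fixes the sign, yielding $\myvec{v}_1 > \myvec{0}$ (with the opposite sign convention on $\myvec{u}_1$ one obtains $\myvec{v}_1 < \myvec{0}$). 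Finally, for \eqref{eq:PNy} I would read off $U U^{-1} = \myvec{u}_1 \myvec{v}_1^{\top} + U_2 V_2 = I_{2n}$, where $V_2$ consists of the remaining rows of $U^{-1}$. For any $\myvec{y} \in \RS^{2n}$, this decomposes $\myvec{y} = (\myvec{v}_1^{\top}\myvec{y})\myvec{u}_1 + U_2 V_2 \myvec{y}$ uniquely along $\mathcal{N} \oplus \mathcal{R}$, so the component in $\mathcal{N}$ is exactly $(\myvec{v}_1^{\top}\myvec{y})\myvec{u}_1$.

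The main obstacle is verifying the irreducibility of $G(\myvec{x}^*)$ rigorously from the block structure, since that is what underwrites both the simplicity of the Perron eigenvalue and the clean splitting $\RS^{2n} = \mathcal{N} \oplus \mathcal{R}$; once these are in hand, the positivity of $\myvec{v}_1$ and the projection formula are essentially bookkeeping. A secondary point worth flagging is that the $P_{\mathcal{N}}$ appearing in \eqref{eq:PNy} should be understood as the spectral (oblique) projector onto $\mathcal{N}$ along $\mathcal{R}$ produced by the above construction, rather than the orthogonal projector of the general preliminaries; this is the projector actually used in the singular convergence analysis of the subsequent theorems.
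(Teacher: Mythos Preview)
Your argument is correct and is essentially the standard Perron--Frobenius route one would expect for this result. Note, however, that the paper does not give its own proof of this lemma: it is quoted verbatim from \cite[Lemma~3.4]{HuangKH2010} and used as a black box, so there is no in-paper proof to compare against. The proof in the cited reference follows the same Perron--Frobenius strategy you outline (irreducibility of $G(\myvec{x}^*)$, simplicity of the Perron eigenvalue $1$, positive right and left Perron vectors furnishing $\myvec{u}_1$ and $\myvec{v}_1$, and the resulting spectral projector), so your proposal matches the intended argument.

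Your closing caveat is well taken and worth stating explicitly: the formula $P_{\mathcal{N}}\myvec{y} = (\myvec{v}_1^{\top}\myvec{y})\myvec{u}_1$ describes the \emph{oblique} projector onto $\mathcal{N}$ along $\mathcal{R}$, whereas the preliminaries in Section~\ref{sec:Preliminaries} introduce $P_{\mathcal{X}}$ as the orthogonal projection. Since $\myvec{f}'(\myvec{x}^*)$ is not symmetric, $\mathcal{N}$ and $\mathcal{R}$ are complementary but not orthogonal, so the identity $I = P_{\mathcal{N}} + P_{\mathcal{R}}$ used repeatedly in the singular analysis only holds for the oblique pair. The paper (following \cite{HuangKH2010}) is implicitly working with these spectral projectors throughout Section~4.2, and your reading is the correct one.
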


\begin{mylemma}
  \label{lem:zk_in_W(r,theta)}
  Assume that $\myvec{f}'(\myvec{x}^*)$ is singular.
  Let $\mathcal{W}(r,\theta)$ be defined by \eqref{set:W} with any $r, \theta > 0$ sufficiently small.
  If $\myvec{x}_k, \myvec{y}_k \in \mathcal{W}(r,\theta)$ for some $k \geq 0$,
  then $\myvec{z}_k \in \mathcal{W}(r,\theta)$.
\end{mylemma}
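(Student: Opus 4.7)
The plan is to verify the two defining conditions of $\mathcal{W}(r,\theta)$ separately for $\myvec{z}_k=(\myvec{x}_k+\myvec{y}_k)/2$, using linearity of the projections $P_{\mathcal{N}}$ and $P_{\mathcal{R}}$ together with Lemma \ref{lem:PNy} and the monotonicity obtained in Theorem \ref{th:MonotoneConvergence}.

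First I would dispatch the radius condition $\|\myvec{z}_k-\myvec{x}^*\|<r$ by a convexity argument: writing $\myvec{z}_k-\myvec{x}^* = \tfrac{1}{2}(\myvec{x}_k-\myvec{x}^*)+\tfrac{1}{2}(\myvec{y}_k-\myvec{x}^*)$ and applying the triangle inequality, the bound is immediate from $\myvec{x}_k,\myvec{y}_k\in\mathcal{W}(r,\theta)$.

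The harder part is the cone condition $\|P_{\mathcal{R}}(\myvec{z}_k-\myvec{x}^*)\|\le\theta\|P_{\mathcal{N}}(\myvec{z}_k-\myvec{x}^*)\|$. By linearity of the projections,
\begin{equation*}
P_{\mathcal{R}}(\myvec{z}_k-\myvec{x}^*) = \tfrac{1}{2}P_{\mathcal{R}}(\myvec{x}_k-\myvec{x}^*)+\tfrac{1}{2}P_{\mathcal{R}}(\myvec{y}_k-\myvec{x}^*),
\end{equation*}
so the triangle inequality and the hypothesis $\myvec{x}_k,\myvec{y}_k\in\mathcal{W}(r,\theta)$ give
\begin{equation*}
\|P_{\mathcal{R}}(\myvec{z}_k-\myvec{x}^*)\| \le \tfrac{\theta}{2}\bigl(\|P_{\mathcal{N}}(\myvec{x}_k-\myvec{x}^*)\|+\|P_{\mathcal{N}}(\myvec{y}_k-\myvec{x}^*)\|\bigr).
\end{equation*}
The remaining task is to show the reverse-direction inequality
\begin{equation*}
\tfrac{1}{2}\bigl(\|P_{\mathcal{N}}(\myvec{x}_k-\myvec{x}^*)\|+\|P_{\mathcal{N}}(\myvec{y}_k-\myvec{x}^*)\|\bigr) \le \|P_{\mathcal{N}}(\myvec{z}_k-\myvec{x}^*)\|,
\end{equation*}
which is \emph{not} a generic triangle inequality fact and is the main obstacle.

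To overcome it I would invoke Lemma \ref{lem:PNy}: there exist vectors $\myvec{u}_1$ and $\myvec{v}_1$ with $\myvec{v}_1>\myvec{0}$ or $\myvec{v}_1<\myvec{0}$ such that $P_{\mathcal{N}}\myvec{y}=(\myvec{v}_1^{\top}\myvec{y})\myvec{u}_1$ for every $\myvec{y}\in\RS^{2n}$. By Theorem \ref{th:MonotoneConvergence}(ii), the iterates satisfy $\myvec{0}\le\myvec{x}_k<\myvec{y}_k<\myvec{x}^*$, so the componentwise differences $\myvec{x}_k-\myvec{x}^*$ and $\myvec{y}_k-\myvec{x}^*$ are both strictly negative vectors. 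Consequently the scalars $\myvec{v}_1^{\top}(\myvec{x}_k-\myvec{x}^*)$ and $\myvec{v}_1^{\top}(\myvec{y}_k-\myvec{x}^*)$ share a common sign, so the vectors $P_{\mathcal{N}}(\myvec{x}_k-\myvec{x}^*)$ and $P_{\mathcal{N}}(\myvec{y}_k-\myvec{x}^*)$ are parallel multiples of $\myvec{u}_1$ pointing the same way. Their norms therefore add exactly:
\begin{equation*}
\|P_{\mathcal{N}}(\myvec{z}_k-\myvec{x}^*)\| = \tfrac{1}{2}\bigl(\|P_{\mathcal{N}}(\myvec{x}_k-\myvec{x}^*)\|+\|P_{\mathcal{N}}(\myvec{y}_k-\myvec{x}^*)\|\bigr).
\end{equation*}
Combining this identity with the earlier estimate on $\|P_{\mathcal{R}}(\myvec{z}_k-\myvec{x}^*)\|$ yields the cone condition and completes the proof. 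The key insight is that the sign-definiteness of $\myvec{v}_1$ together with monotone convergence guarantees that the one-dimensional null-space components of $\myvec{x}_k-\myvec{x}^*$ and $\myvec{y}_k-\myvec{x}^*$ never cancel upon averaging.
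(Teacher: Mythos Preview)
Your proposal is correct and follows essentially the same approach as the paper: the paper also handles the radius condition by convexity, invokes Lemma~\ref{lem:PNy} to express $P_{\mathcal N}(\myvec{x}_k-\myvec{x}^*)$ and $P_{\mathcal N}(\myvec{y}_k-\myvec{x}^*)$ as scalar multiples of $\myvec{u}_1$, and uses the sign-definiteness of $\myvec{v}_1$ together with $\myvec{x}_k,\myvec{y}_k<\myvec{x}^*$ to ensure these scalars have a common sign, so that the norms add without cancellation. Your write-up is slightly more explicit than the paper in citing Theorem~\ref{th:MonotoneConvergence}(ii) as the source of the componentwise inequality $\myvec{x}_k,\myvec{y}_k<\myvec{x}^*$, but the argument is the same.
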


\begin{proof}
  It is clear from \eqref{set:W} that $\myvec{x}_k, \myvec{y}_k \in \mathcal{W}(r,\theta)$ implies
  \begin{equation}
  \label{ineq:norm_zk-x*}
  \|\myvec{z}_k - \myvec{x}^*\| = \|(\myvec{x}_k - \myvec{x}^*) + (\myvec{y}_k - \myvec{x}^*)\|/2
  \leq \max\{\|\myvec{x}_k - \myvec{x}^*\|, \|\myvec{y}_k - \myvec{x}^*\|\} < r.
  \end{equation}
  On the other hand, we let $\myvec{u}_1$ and $\myvec{v}_1^{\top}$ be the first column of $U$ and the first row of $U^{-1}$, respectively,
  where $U \in \RS^{2n\times 2n}$ is the nonsingular matrix in Lemma \ref{lem:PNy}.
  It follows from \eqref{eq:PNy} that
  $$
  P_{\mathcal{N}}(\myvec{x}_k - \myvec{x}^*) = [\myvec{v}_1^{\top}(\myvec{x}_k - \myvec{x}^*)]\myvec{u}_1
  \quad \text{and} \quad
  P_{\mathcal{N}}(\myvec{y}_k - \myvec{x}^*) = [\myvec{v}_1^{\top}(\myvec{y}_k - \myvec{x}^*)]\myvec{u}_1.
  $$
  Then we have
  \begin{align*}
    \|P_{\mathcal{R}}(\myvec{x}_k - \myvec{x}^*)\| 
      &\leq \theta \|P_{\mathcal{N}}(\myvec{x}_k - \myvec{x}^*)\|
        = \theta \left|\myvec{v}_1^{\top}(\myvec{x}_k - \myvec{x}^*)\right|\|\myvec{u}_1\|, \\
    \|P_{\mathcal{R}}(\myvec{y}_k - \myvec{x}^*)\|
      &\leq \theta \|P_{\mathcal{N}}(\myvec{y}_k - \myvec{x}^*)\|
        = \theta \left|\myvec{v}_1^{\top}(\myvec{y}_k - \myvec{x}^*)\right|\|\myvec{u}_1\|.
  \end{align*}
  Since $\myvec{v}_1 > \myvec{0}$ or $\myvec{v}_1 < \myvec{0}$, one has that
  $\myvec{v}_1^{\top}(\myvec{x}_k - \myvec{x}^*)$ and
  $\myvec{v}_1^{\top}(\myvec{y}_k - \myvec{x}^*)$
  are both nonnegative or nonpositive. This implies that
  \begin{align*}
    \|P_{\mathcal{R}}(\myvec{x}_k - \myvec{x}^*)\| + \|P_{\mathcal{R}}(\myvec{y}_k - \myvec{x}^*)\|
    &\leq \theta \left|\myvec{v}_1^{\top}(\myvec{x}_k - \myvec{x}^*)\right|\|\myvec{u}_1\|
      + \theta \left|\myvec{v}_1^{\top}(\myvec{y}_k - \myvec{x}^*)\right|\|\myvec{u}_1\| \\
    &= \theta \left|\myvec{v}_1^{\top}(\myvec{x}_k - \myvec{x}^*) 
      + \myvec{v}_1^{\top}(\myvec{y}_k - \myvec{x}^*)\right|\|\myvec{u}_1\| \\
    &= \theta \|[\myvec{v}_1^{\top}(\myvec{x}_k - \myvec{x}^*)]\myvec{u}_1 
      + [\myvec{v}_1^{\top}(\myvec{y}_k - \myvec{x}^*)]\myvec{u}_1\| \\
    &= \theta \|P_{\mathcal{N}}(\myvec{x}_k - \myvec{x}^*) + P_{\mathcal{N}}(\myvec{y}_k - \myvec{x}^*)\|.
  \end{align*}
  Thus we conclude that
  \begin{align*}
    \|P_{\mathcal{R}}(\myvec{z}_k - \myvec{x}^*)\|
    &= \frac{1}{2}\|P_{\mathcal{R}}(\myvec{x}_k - \myvec{x}^*) + P_{\mathcal{R}}(\myvec{y}_k - \myvec{x}^*)\| \\
    &\leq \frac{1}{2}\left(\|P_{\mathcal{R}}(\myvec{x}_k - \myvec{x}^*)\| 
      + \|P_{\mathcal{R}}(\myvec{y}_k - \myvec{x}^*)\|\right) \\
    &\leq \frac{\theta}{2}\|P_{\mathcal{N}}(\myvec{x}_k - \myvec{x}^*) 
      + P_{\mathcal{N}}(\myvec{y}_k - \myvec{x}^*)\|
      = \theta \|P_{\mathcal{N}}(\myvec{z}_k - \myvec{x}^*)\|,
  \end{align*}
  which together with \eqref{ineq:norm_zk-x*} means that $\myvec{z}_k \in \mathcal{W}(r,\theta)$. 
  This completes the proof of the lemma.
\end{proof}

The below lemma taken from \cite[Lemma 3.6]{HuangKH2010} is also needed.

\begin{mylemma}
  Assume that $\myvec{f}'(\myvec{x}^*)$ is singular.
  For any $\myvec{x} \in \RS^{2n}$ satisfying $\myvec{0} < \myvec{x} < \myvec{x}^*$
  and $\|\myvec{x} - \myvec{x}^*\| \ll 1$, we have
  $\|P_{\mathcal{N}}\myvec{f}'(\myvec{x})^{-1}\| = \mathcal{O}(\|\myvec{x} - \myvec{x}^*\|^{-1})$
  and $\|P_{\mathcal{R}}\myvec{f}'(\myvec{x})^{-1}\| = \mathcal{O}(1)$.
\end{mylemma}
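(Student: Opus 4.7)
The plan is to conjugate $\myvec{f}'(\myvec{x})$ by the matrix $U$ from Lemma \ref{lem:PNy} to obtain a convenient $2\times 2$ block representation, invert it via a Schur-complement formula, and then read off the two norm estimates using that $P_{\mathcal{N}}$ and $P_{\mathcal{R}}$ in these coordinates simply pick out the first row and the remaining rows, respectively. Setting $\myvec{e}:=\myvec{x}-\myvec{x}^*<\myvec{0}$ with $\|\myvec{e}\|\ll1$, the vanishing of $\myvec{f}'''$ makes the identity $\myvec{f}'(\myvec{x})=\myvec{f}'(\myvec{x}^*)+\myvec{f}''(\myvec{x}^*)\myvec{e}$ exact, so conjugating by $U$ produces
$$
U^{-1}\myvec{f}'(\myvec{x})U = \begin{bmatrix} \alpha & \myvec{b}^{\top} \\ \myvec{c} & M \end{bmatrix},
$$
with $\alpha = \myvec{v}_1^{\top}\myvec{f}''(\myvec{x}^*)\myvec{u}_1\myvec{e}$, $\myvec{b},\myvec{c}=\mathcal{O}(\|\myvec{e}\|)$, and $M=M_{22}+\mathcal{O}(\|\myvec{e}\|)$ invertible with $M^{-1}=\mathcal{O}(1)$ once $\|\myvec{e}\|$ is small enough.

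The main obstacle is the two-sided estimate $|\alpha|\asymp\|\myvec{e}\|$; the upper bound is immediate, but the lower bound $|\alpha|\gtrsim\|\myvec{e}\|$ must exploit the sign structure of $\myvec{f}''$ together with the positivity of $\myvec{u}_1$ and $\myvec{v}_1$. Since $\myvec{f}'(\myvec{x}^*)=I_{2n}-G(\myvec{x}^*)$ is an irreducible singular $M$-matrix by Lemma \ref{lem:f'invertibility}, Perron-Frobenius allows us to choose $\myvec{u}_1>\myvec{0}$, while Lemma \ref{lem:PNy} supplies $\myvec{v}_1>\myvec{0}$ (after a possible sign flip). Combining $\myvec{u}_1>\myvec{0}$ with $\myvec{e}<\myvec{0}$, the second inequality in \eqref{ineq:f''(x)h1h2h3} yields $\myvec{f}''(\myvec{x}^*)\myvec{u}_1\myvec{e}>\myvec{0}$ componentwise, and the explicit form of the matrices $L_i$ in \eqref{eq:f''(x)h1h2h3}, together with strict positivity of $\myvec{u}_1$, shows that each component of $\myvec{f}''(\myvec{x}^*)\myvec{u}_1\myvec{e}$ is a strictly positive linear combination of the absolute values of the entries of $\myvec{e}$, hence bounded below by a fixed constant times $\|\myvec{e}\|_{\infty}$. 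Pairing with the strictly positive $\myvec{v}_1$ then yields $|\alpha|\gtrsim\|\myvec{e}\|$, as required.

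Once $\alpha\asymp\|\myvec{e}\|$ is in hand, the Schur complement $s:=\alpha-\myvec{b}^{\top}M^{-1}\myvec{c}$ is still of order $\|\myvec{e}\|$, since the subtracted term is $\mathcal{O}(\|\myvec{e}\|^2)$. The standard block-inverse formula then gives
$$
U^{-1}\myvec{f}'(\myvec{x})^{-1}U = \begin{bmatrix} s^{-1} & -s^{-1}\myvec{b}^{\top}M^{-1} \\ -s^{-1}M^{-1}\myvec{c} & M^{-1}+s^{-1}M^{-1}\myvec{c}\myvec{b}^{\top}M^{-1} \end{bmatrix},
$$
whose first row is $\mathcal{O}(\|\myvec{e}\|^{-1})$ (dominated by the $(1,1)$ entry $s^{-1}$, the off-diagonal entry $-s^{-1}\myvec{b}^{\top}M^{-1}$ being of order $\mathcal{O}(\|\myvec{e}\|^{-1})\cdot\mathcal{O}(\|\myvec{e}\|)=\mathcal{O}(1)$), while the remaining $(2n-1)$ rows are all $\mathcal{O}(1)$ since $-s^{-1}M^{-1}\myvec{c}=\mathcal{O}(1)$ and the rank-one correction $s^{-1}M^{-1}\myvec{c}\myvec{b}^{\top}M^{-1}$ is likewise $\mathcal{O}(1)$. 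Finally, in the $U$-coordinates one has $U^{-1}P_{\mathcal{N}}U=\myvec{e}_1\myvec{e}_1^{\top}$ and $U^{-1}P_{\mathcal{R}}U=I_{2n}-\myvec{e}_1\myvec{e}_1^{\top}$, so pre-multiplication by $P_{\mathcal{N}}$ retains only the first row block (norm $\mathcal{O}(\|\myvec{e}\|^{-1})$) and pre-multiplication by $P_{\mathcal{R}}$ retains only the remaining rows (norm $\mathcal{O}(1)$); absorbing the fixed factors $\|U\|$ and $\|U^{-1}\|$ into the $\mathcal{O}$ then delivers the two advertised estimates.
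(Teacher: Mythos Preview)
The paper does not prove this lemma at all; it is quoted verbatim from \cite[Lemma~3.6]{HuangKH2010} with no argument supplied. Your block--Schur-complement approach via the similarity $U$ of Lemma~\ref{lem:PNy} is a natural and essentially correct way to establish the estimates, and is in fact the standard technique in this literature.

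There is one small overstatement to repair. You assert that each component of $\myvec{f}''(\myvec{x}^*)\myvec{u}_1\myvec{e}$ is a strictly positive linear combination of the absolute values of \emph{all} the entries of $\myvec{e}$, and hence individually bounded below by a constant times $\|\myvec{e}\|_\infty$. Inspecting the matrices $L_i$ in \eqref{eq:f''(x)h1h2h3} shows this is not quite true: writing $\myvec{e}=[\myvec{p}^\top,\myvec{q}^\top]^\top$, the $i$-th component for $i\le n$ involves only $p_i$ and all of $\myvec{q}$, while the $(n+i)$-th component involves only $q_i$ and all of $\myvec{p}$. So a single component can be of order $\epsilon$ even when $\|\myvec{e}\|_\infty=1$ (take, e.g., $\myvec{e}=(-1,-\epsilon,\dots,-\epsilon)$ and look at component $2$). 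What \emph{is} true, and is all you need, is that after pairing with the strictly positive vector $\myvec{v}_1$ the scalar
\[
\alpha=\myvec{v}_1^\top\bigl[\myvec{f}''(\myvec{x}^*)\myvec{u}_1\myvec{e}\bigr]
\]
is a strictly positive linear combination of every $|e_j|$ (each $|p_j|$ enters through component $j$ and through components $n+1,\dots,2n$, each $|q_j|$ symmetrically), hence $|\alpha|\ge c\|\myvec{e}\|_1\ge c'\|\myvec{e}\|$. With this minor correction the Schur-complement inversion and the identification $U^{-1}P_{\mathcal{N}}U=\myvec{e}_1\myvec{e}_1^\top$, $U^{-1}P_{\mathcal{R}}U=I_{2n}-\myvec{e}_1\myvec{e}_1^\top$ go through exactly as you describe, and the two norm bounds follow.
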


Due to the above lemmas, there exist constants $r_0, \mu_0, \nu_0 > 0$ such that
\begin{equation}
  \label{ineq:PNf'(x)-1}
  \|P_{\mathcal{N}}\myvec{f}'(\myvec{x})^{-1}\| \leq \mu_0\|\myvec{x} - \myvec{x}^*\|^{-1}
  \quad \text{and} \quad
  \|P_{\mathcal{R}}\myvec{f}'(\myvec{x})^{-1}\| \leq \nu_0
\end{equation}
for any $\myvec{x} \in \RS^{2n}$ 
satisfying $\myvec{0} < \myvec{x} < \myvec{x}^*$ and $\|\myvec{x} - \myvec{x}^*\| \leq r_0$.
Then we have the following lemma.

\begin{mylemma}
  \label{lem:yk+1_in_W(r,theta)}
  Assume that $\myvec{f}'(\myvec{x}^*)$ is singular.
  Let $\mathcal{W}(r,\theta)$ be defined by \eqref{set:W} 
  with $0 < \theta < 1/(\mu_0\|\myvec{f}''(\myvec{x}^*)\|)$ and 
  $$
  r = \min\{r_0, \theta(1 - \mu_0\theta\|\myvec{f}''(\myvec{x}^*)\|)/[2\nu_0(1+\theta)\|\myvec{f}''(\myvec{x}^*)\|]\},
  $$
  where the constants $r_0, \mu_0, \nu_0 > 0$ are defined in \eqref{ineq:PNf'(x)-1}.
  If $\myvec{x}_k \in \mathcal{W}(r,\theta)$ for some $k \geq 0$,
  then $\myvec{y}_k \in \mathcal{W}(r,\theta)$.
\end{mylemma}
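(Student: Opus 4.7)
My plan is to adapt the Decker--Kelley singular convergence framework to the two-step modified Newton iteration, exploiting that $\myvec{f}''' \equiv \myvec{0}$ makes Taylor's expansion \eqref{eq:TaylorExpansion} an exact identity and gives the affine relation $\myvec{f}'(\myvec{z}_{k-1}) = \myvec{f}'(\myvec{x}^*) + \myvec{f}''(\myvec{x}^*)(\myvec{z}_{k-1} - \myvec{x}^*)$. Starting from $\myvec{y}_k - \myvec{x}^* = \myvec{f}'(\myvec{z}_{k-1})^{-1}[\myvec{f}'(\myvec{z}_{k-1})(\myvec{x}_k - \myvec{x}^*) - \myvec{f}(\myvec{x}_k)]$ and substituting both identities, I would derive the closed form
$$
\myvec{y}_k - \myvec{x}^* = [I - \myvec{f}'(\myvec{z}_{k-1})^{-1}\myvec{f}'(\myvec{x}^*)](\myvec{x}_k - \myvec{x}^*) - \tfrac{1}{2}\myvec{f}'(\myvec{z}_{k-1})^{-1}\myvec{f}''(\myvec{x}^*)(\myvec{x}_k - \myvec{x}^*)(\myvec{x}_k - \myvec{x}^*),
$$
which after an algebraic rearrangement is equivalent to $\myvec{y}_k - \myvec{x}^* = \tfrac{1}{2}\myvec{f}'(\myvec{z}_{k-1})^{-1}\myvec{f}''(\myvec{x}^*)(2\myvec{z}_{k-1} - \myvec{x}_k - \myvec{x}^*)(\myvec{x}_k - \myvec{x}^*)$.

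Next I would decompose this error via $I = P_{\mathcal{N}} + P_{\mathcal{R}}$ and exploit the key algebraic identity $I - \myvec{f}'(\myvec{z}_{k-1})^{-1}\myvec{f}'(\myvec{x}^*) = \myvec{f}'(\myvec{z}_{k-1})^{-1}\myvec{f}''(\myvec{x}^*)(\myvec{z}_{k-1} - \myvec{x}^*)$. Combined with \eqref{ineq:PNf'(x)-1}, this delivers the crucial \emph{uniform} bound
$$
\|P_{\mathcal{N}}[I - \myvec{f}'(\myvec{z}_{k-1})^{-1}\myvec{f}'(\myvec{x}^*)]\| \leq \mu_0 \|\myvec{f}''(\myvec{x}^*)\|,
$$
in which the $1/\|\myvec{z}_{k-1} - \myvec{x}^*\|$ blow-up of $\|P_{\mathcal{N}}\myvec{f}'(\myvec{z}_{k-1})^{-1}\|$ is exactly cancelled by the factor $\|\myvec{z}_{k-1} - \myvec{x}^*\|$ coming from the Hessian term. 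Since $\myvec{f}'(\myvec{x}^*)P_{\mathcal{N}} = \myvec{0}$, applying $P_{\mathcal{N}}$ to the first form and using the wedge hypothesis $\|P_{\mathcal{R}}(\myvec{x}_k - \myvec{x}^*)\| \leq \theta \|P_{\mathcal{N}}(\myvec{x}_k - \myvec{x}^*)\|$ together with the reverse triangle inequality should produce the lower bound
$$
\|P_{\mathcal{N}}(\myvec{y}_k - \myvec{x}^*)\| \geq \bigl(1 - \mu_0\theta\|\myvec{f}''(\myvec{x}^*)\|\bigr)\|P_{\mathcal{N}}(\myvec{x}_k - \myvec{x}^*)\| - (\text{quadratic remainder}),
$$
which is strictly positive by the standing hypothesis $\theta < 1/(\mu_0 \|\myvec{f}''(\myvec{x}^*)\|)$. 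In parallel, applying $P_{\mathcal{R}}$ to the second form of the identity and using $\|P_{\mathcal{R}}\myvec{f}'(\myvec{z}_{k-1})^{-1}\| \leq \nu_0$, $\|\myvec{x}_k - \myvec{x}^*\| \leq (1+\theta)\|P_{\mathcal{N}}(\myvec{x}_k - \myvec{x}^*)\|$, and $\|\myvec{x}_k - \myvec{x}^*\|, \|\myvec{z}_{k-1} - \myvec{x}^*\| < r$ yields an upper bound of the form $\nu_0(1+\theta)\|\myvec{f}''(\myvec{x}^*)\|\,r\,\|P_{\mathcal{N}}(\myvec{x}_k - \myvec{x}^*)\|$.

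Finally, dividing these two estimates and plugging in the explicit value $r = \theta(1 - \mu_0\theta\|\myvec{f}''(\myvec{x}^*)\|)/[2\nu_0(1+\theta)\|\myvec{f}''(\myvec{x}^*)\|]$ will collapse the ratio $\|P_{\mathcal{R}}(\myvec{y}_k - \myvec{x}^*)\|/\|P_{\mathcal{N}}(\myvec{y}_k - \myvec{x}^*)\|$ to at most $\theta$, establishing the wedge condition for $\myvec{y}_k$; the accompanying norm bound $\|\myvec{y}_k - \myvec{x}^*\| < r$ then follows from $\|\myvec{y}_k - \myvec{x}^*\| \leq (1+\theta)\|P_{\mathcal{N}}(\myvec{y}_k - \myvec{x}^*)\|$. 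The main obstacle will be the lower bound on $\|P_{\mathcal{N}}(\myvec{y}_k - \myvec{x}^*)\|$: beyond the clean cancellation highlighted above, one still has to control the residual quadratic term $\tfrac{1}{2}P_{\mathcal{N}}\myvec{f}'(\myvec{z}_{k-1})^{-1}\myvec{f}''(\myvec{x}^*)(\myvec{x}_k - \myvec{x}^*)^2$, whose apparent $1/\|\myvec{z}_{k-1} - \myvec{x}^*\|$ growth from \eqref{ineq:PNf'(x)-1} is \emph{not} cancelled algebraically. Overcoming this requires the implicit induction hypothesis (supplied by Lemma~\ref{lem:zk_in_W(r,theta)} at the previous step) that $\myvec{z}_{k-1}$ also lies in $\mathcal{W}(r,\theta)$, so that $\|\myvec{z}_{k-1} - \myvec{x}^*\|$ is comparable to $\|P_{\mathcal{N}}(\myvec{z}_{k-1} - \myvec{x}^*)\|$ and the quadratic term is absorbed into the first-order lower bound.
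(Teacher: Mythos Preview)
Your algebraic setup is correct and coincides with the paper's: the closed form
$\myvec{y}_k-\myvec{x}^*=\tfrac12\,\myvec{f}'(\myvec{z}_{k-1})^{-1}\myvec{f}''(\myvec{x}^*)(2\myvec{z}_{k-1}-\myvec{x}_k-\myvec{x}^*)(\myvec{x}_k-\myvec{x}^*)$
and the cancellation behind $\|P_{\mathcal N}[I-\myvec{f}'(\myvec{z}_{k-1})^{-1}\myvec{f}'(\myvec{x}^*)]\|\le\mu_0\|\myvec{f}''(\myvec{x}^*)\|$ are exactly what the paper uses. The gap is in your treatment of the ``quadratic remainder'' in the $P_{\mathcal N}$ lower bound, and it is not a detail you can patch with the induction hypothesis $\myvec{z}_{k-1}\in\mathcal W(r,\theta)$ alone.

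Concretely, after applying $P_{\mathcal N}$ to your first form, the residual
$\tfrac12\,P_{\mathcal N}\myvec{f}'(\myvec{z}_{k-1})^{-1}\myvec{f}''(\myvec{x}^*)(\myvec{x}_k-\myvec{x}^*)^2$
is bounded by $\tfrac12\,\mu_0\|\myvec{f}''(\myvec{x}^*)\|\,\|\myvec{x}_k-\myvec{x}^*\|^2/\|\myvec{z}_{k-1}-\myvec{x}^*\|$. Even granting $\myvec{z}_{k-1}\in\mathcal W(r,\theta)$ and the monotone ordering $\|\myvec{z}_{k-1}-\myvec{x}^*\|\ge\|\myvec{x}_k-\myvec{x}^*\|$, this only reduces to a \emph{first-order} term $\tfrac12\,\mu_0\|\myvec{f}''(\myvec{x}^*)\|(1+\theta)\|P_{\mathcal N}(\myvec{x}_k-\myvec{x}^*)\|$. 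Absorbing it into your leading coefficient $1-\mu_0\theta\|\myvec{f}''(\myvec{x}^*)\|$ would force $\mu_0\|\myvec{f}''(\myvec{x}^*)\|(1+3\theta)<2$, a bound on $\mu_0\|\myvec{f}''(\myvec{x}^*)\|$ itself that is \emph{not} among the hypotheses (only $\mu_0\theta\|\myvec{f}''(\myvec{x}^*)\|<1$ is assumed). So the ratio need not collapse to $\theta$ with the stated $r$.

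The paper avoids this entirely by a different split: it writes $2\myvec{z}_{k-1}-\myvec{x}_k-\myvec{x}^*=(\myvec{z}_{k-1}-\myvec{x}_k)+(\myvec{z}_{k-1}-\myvec{x}^*)$ and, for the $(\myvec{z}_{k-1}-\myvec{x}_k)$ piece, uses the \emph{sign structure} coming from Theorem~\ref{th:MonotoneConvergence} (namely $\myvec{f}'(\myvec{z}_{k-1})^{-1}\ge 0$, $\myvec{z}_{k-1}<\myvec{x}_k<\myvec{x}^*$, and \eqref{ineq:f''(x)h1h2h3}) to show this cross term has a favorable componentwise sign and can be \emph{dropped}. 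What remains is exactly $\tfrac12 P_{\mathcal N}(\myvec{x}_k-\myvec{x}^*)$ plus a single correction of size $\mu_0\theta\|\myvec{f}''(\myvec{x}^*)\|\cdot\tfrac12\|P_{\mathcal N}(\myvec{x}_k-\myvec{x}^*)\|$, yielding the clean lower bound $\tfrac12(1-\mu_0\theta\|\myvec{f}''(\myvec{x}^*)\|)\|P_{\mathcal N}(\myvec{x}_k-\myvec{x}^*)\|$ with no residual quadratic. The paper also gets $\|\myvec{y}_k-\myvec{x}^*\|<r$ directly from Theorem~\ref{th:MonotoneConvergence}(ii), not from the wedge condition as you propose. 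In short, the missing ingredient in your plan is the componentwise positivity argument; without it, the quadratic remainder is the same order as the main term and the constants do not close.
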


\begin{proof}
  From (ii) in Theorem \ref{th:MonotoneConvergence}, 
  it is clear that $\|\myvec{y}_k - \myvec{x}^*\| < \|\myvec{x}_k - \myvec{x}^*\| < r$.
  To show $\myvec{y}_k \in \mathcal{W}(r,\theta)$, it suffices to examine that
  $\|P_{\mathcal{R}}(\myvec{y}_k - \myvec{x}^*)\| \leq \theta\|P_{\mathcal{N}}(\myvec{y}_k - \myvec{x}^*)\|$
  holds for any $k \in \NS$.
  By \eqref{it:TSMNM}, we have
  $$
  \myvec{y}_{k} - \myvec{x}^* = \myvec{x}_k - \myvec{x}^* - \myvec{f}'(\myvec{z}_{k-1})^{-1}\myvec{f}(\myvec{x}_k).
  $$
  For the case $k = 0$, recall that $\myvec{z}_{-1} = \myvec{x}_0$. 
  We get from the Taylor expansion \eqref{eq:TaylorExpansion} that
  \begin{align*}
    \myvec{y}_0 - \myvec{x}^*
      &= \myvec{f}'(\myvec{x}_0)^{-1}[\myvec{f}'(\myvec{x}_0)(\myvec{x}_0 - \myvec{x}^*) - \myvec{f}(\myvec{x}_0)] \\
      &= \frac{1}{2}\myvec{f}'(\myvec{x}_0)^{-1}\myvec{f}''(\myvec{x}_0)(\myvec{x}^* - \myvec{x}_0)(\myvec{x}^* - \myvec{x}_0).
  \end{align*}
  Notice from Lemma \ref{lem:f''(x)hh} that $\myvec{f}''(\myvec{x})\myvec{h}\myvec{h}$ is independent of $\myvec{x}$ for any $\myvec{h} \in \RS^{2n}$.
  It follows from \eqref{ineq:PNf'(x)-1} that
  \begin{equation}
    \label{ineq:PNy0}
    \|P_{\mathcal{R}}(\myvec{y}_0 - \myvec{x}^*)\|
      \leq \frac{1}{2}\nu_0\|\myvec{f}''(\myvec{x}^*)\|\|\myvec{x}^* - \myvec{x}_0\|^2.
  \end{equation}
  Since $\myvec{f}'(\myvec{x}_0) = \myvec{f}'(\myvec{x}^*) + \myvec{f}''(\myvec{x}^*)(\myvec{x}_0 - \myvec{x}^*)$
  and $\myvec{f}'(\myvec{x}^*)[P_{\mathcal{N}}(\myvec{x}_0 - \myvec{x}^*)] = \myvec{0}$,
  one has 
  \begin{align*}
    \myvec{y}_0 - \myvec{x}^*
    &= \frac{1}{2}\myvec{f}'(\myvec{x}_0)^{-1}\myvec{f}''(\myvec{x}^*)(\myvec{x}_0 - \myvec{x}^*)(\myvec{x}_0 - \myvec{x}^*) \\
    &= \frac{1}{2}\myvec{f}'(\myvec{x}_0)^{-1}[\myvec{f}'(\myvec{x}_0) - \myvec{f}'(\myvec{x}^*)]
      [P_{\mathcal{N}}(\myvec{x}_0 - \myvec{x}^*) + P_{\mathcal{R}}(\myvec{x}_0 - \myvec{x}^*)] \\
    &= \frac{1}{2}P_{\mathcal{N}}(\myvec{x}_0 - \myvec{x}^*) 
      + \frac{1}{2}\myvec{f}'(\myvec{x}_0)^{-1}\myvec{f}''(\myvec{x}^*)(\myvec{x}_0 - \myvec{x}^*)[P_{\mathcal{R}}(\myvec{x}_0 - \myvec{x}^*)].
  \end{align*}
  Then the reverse triangle inequality and \eqref{ineq:PNf'(x)-1} yield
  \begin{align*}
    \|P_{\mathcal{N}}(\myvec{y}_0 - \myvec{x}^*)\|
      &\geq \frac{1}{2}\|P_{\mathcal{N}}(\myvec{x}_0 - \myvec{x}^*)\|
        - \frac{1}{2}\mu_0\|\myvec{f}''(\myvec{x}^*)\|\|P_{\mathcal{R}}(\myvec{x}_0 - \myvec{x}^*)\| \\
      &\geq \frac{1}{2}\big(1 - \mu_0\theta\|\myvec{f}''(\myvec{x}^*)\|\big)\|P_{\mathcal{N}}(\myvec{x}_0 - \myvec{x}^*)\|.
  \end{align*}
  This together with \eqref{ineq:PNy0} gives
  \begin{align*}
    \frac{\|P_{\mathcal{R}}(\myvec{y}_0 - \myvec{x}^*)\|}{\|P_{\mathcal{N}}(\myvec{y}_0 - \myvec{x}^*)\|}
      &\leq \frac{\nu_0\|\myvec{f}''(\myvec{x}^*)\|\|\myvec{x}^* - \myvec{x}_0\|^2}
      {(1 - \mu_0\theta\|\myvec{f}''(\myvec{x}^*)\|)\|P_{\mathcal{N}}(\myvec{x}_0 - \myvec{x}^*)\|} \\
      &\leq \frac{r\nu_0(1+\theta)\|\myvec{f}''(\myvec{x}^*)\|}{1 - \mu_0\theta\|\myvec{f}''(\myvec{x}^*)\|}
      \leq \frac{2r\nu_0(1+\theta)\|\myvec{f}''(\myvec{x}^*)\|}{1 - \mu_0\theta\|\myvec{f}''(\myvec{x}^*)\|}
      \leq \theta,
  \end{align*}
  which means that $\myvec{y}_0 \in \mathcal{W}(r,\theta)$.
  For the case $k \geq 1$,
  we deduce again from the Taylor expansion \eqref{eq:TaylorExpansion} that
  \begin{align*}
    \lefteqn{\myvec{y}_{k} - \myvec{x}^*
      = \myvec{f}'(\myvec{z}_{k-1})^{-1}[\myvec{f}'(\myvec{z}_{k-1})(\myvec{x}_{k} - \myvec{x}^*) - \myvec{f}(\myvec{x}_{k})]} \\
      &= \myvec{f}'(\myvec{z}_{k-1})^{-1}\left[\big(\myvec{f}'(\myvec{z}_{k-1}) - \myvec{f}'(\myvec{x}_{k})\big)(\myvec{x}_{k} - \myvec{x}^*)
        + \frac{1}{2}\myvec{f}''(\myvec{x}_{k})(\myvec{x}^* - \myvec{x}_{k})(\myvec{x}^* - \myvec{x}_{k})\right] \\
      &= \frac{1}{2}\myvec{f}'(\myvec{z}_{k-1})^{-1}
      \big[2\myvec{f}''(\myvec{z}_{k-1})(\myvec{z}_{k-1} - \myvec{x}_{k}) 
        + \myvec{f}''(\myvec{x}_{k})(\myvec{x}_{k} - \myvec{x}^*)\big](\myvec{x}_{k} - \myvec{x}^*) \\
      &= \frac{1}{2}\myvec{f}'(\myvec{z}_{k-1})^{-1}\myvec{f}''(\myvec{x}^*)[(\myvec{z}_{k-1} - \myvec{x}_{k}) + (\myvec{z}_{k-1} - \myvec{x}^*)](\myvec{x}_{k} - \myvec{x}^*).
  \end{align*}
  It follows from (ii) in Theorem \ref{th:MonotoneConvergence} that
  $\myvec{0} < \myvec{x}_k - \myvec{z}_{k-1} < \myvec{x}^* - \myvec{z}_{k-1}$, 
  which leads to $\|\myvec{x}_k - \myvec{z}_{k-1}\| \leq \|\myvec{x}^* - \myvec{z}_{k-1}\|$.
  Thus, from \eqref{ineq:PNf'(x)-1}, we conclude
  \begin{align}
    \|P_{\mathcal{R}}(\myvec{y}_{k} - \myvec{x}^*)\|
    &\leq \frac{1}{2}\nu_0\|\myvec{f}''(\myvec{x}^*)\|
      \|(\myvec{z}_{k-1} - \myvec{x}_{k}) + (\myvec{z}_{k-1} - \myvec{x}^*)\|
      \|\myvec{x}_{k} - \myvec{x}^*\| \nonumber \\
    &\leq \nu_0\|\myvec{f}''(\myvec{x}^*)\|\|\myvec{z}_{k-1} - \myvec{x}^*\|\|\myvec{x}_{k} - \myvec{x}^*\|.
    \label{ineq:norm_PRyk+1-x*}
  \end{align}
  On the other hand, since
  $\myvec{f}'(\myvec{z}_{k-1}) = \myvec{f}'(\myvec{x}^*) + \myvec{f}''(\myvec{x}^*)(\myvec{z}_{k-1} - \myvec{x}^*)$,
  we have
  \begin{align*}
    \lefteqn{\myvec{f}''(\myvec{x}^*)[(\myvec{z}_{k-1} - \myvec{x}_{k}) + (\myvec{z}_{k-1} - \myvec{x}^*)](\myvec{x}_{k} - \myvec{x}^*)} \\
      &= \myvec{f}''(\myvec{x}^*)(\myvec{z}_{k-1} - \myvec{x}_{k})(\myvec{x}_{k} - \myvec{x}^*)
        + [\myvec{f}'(\myvec{z}_{k-1}) - \myvec{f}'(\myvec{x}^*)](\myvec{x}_{k} - \myvec{x}^*).
  \end{align*}
  In view of $\myvec{f}'(\myvec{x}^*)[P_{\mathcal{N}}(\myvec{x}_{k} - \myvec{x}^*)] = \myvec{0}$,
  we can further obtain
  \begin{align*}
    \myvec{y}_{k} - \myvec{x}^*
      &= \frac{1}{2}\myvec{f}'(\myvec{z}_{k-1})^{-1}\myvec{f}''(\myvec{x}^*)(\myvec{z}_{k-1} - \myvec{x}_{k})(\myvec{x}_{k} - \myvec{x}^*) \\
      &\quad + \frac{1}{2}\myvec{f}'(\myvec{z}_{k-1})^{-1}[\myvec{f}'(\myvec{z}_{k-1}) - \myvec{f}'(\myvec{x}^*)]
      [P_{\mathcal{N}}(\myvec{x}_{k} - \myvec{x}^*) + P_{\mathcal{R}}(\myvec{x}_{k} - \myvec{x}^*)] \\
      &= \frac{1}{2}\myvec{f}'(\myvec{z}_{k-1})^{-1}\myvec{f}''(\myvec{x}^*)(\myvec{z}_{k-1} - \myvec{x}_{k})(\myvec{x}_{k} - \myvec{x}^*) \\
      &\quad + \frac{1}{2}P_{\mathcal{N}}(\myvec{x}_{k} - \myvec{x}^*)
        + \frac{1}{2}\myvec{f}'(\myvec{z}_{k-1})^{-1}\myvec{f}''(\myvec{x}^*)(\myvec{z}_{k-1} - \myvec{x}^*)[P_{\mathcal{R}}(\myvec{x}_{k} - \myvec{x}^*)].
  \end{align*}
  Thanks to (i) in Theorem \ref{th:MonotoneConvergence} and the second inequality in \eqref{ineq:PNf'(x)-1}, we have
  $$
  \myvec{f}'(\myvec{z}_{k-1})^{-1}\myvec{f}''(\myvec{x}^*)(\myvec{z}_{k-1} - \myvec{x}_{k})(\myvec{x}^* - \myvec{x}_{k}) > \myvec{0}.
  $$
  This implies that
  $$
  \myvec{x}^* - \myvec{y}_{k}
  > \frac{1}{2}P_{\mathcal{N}}(\myvec{x}^* - \myvec{x}_{k})
  + \frac{1}{2}\myvec{f}'(\myvec{z}_{k-1})^{-1}\myvec{f}''(\myvec{x}^*)(\myvec{z}_{k-1} - \myvec{x}^*)[P_{\mathcal{R}}(\myvec{x}^* - \myvec{x}_{k})].
  $$
  By noting that $P_{\mathcal{N}}$ is idempotent,
  it follows from \eqref{ineq:PNf'(x)-1} that
  \begin{align*}
    \|P_{\mathcal{N}}(\myvec{x}^* - \myvec{y}_{k})\|
    &\geq \frac{1}{2}\|P_{\mathcal{N}}(\myvec{x}^* - \myvec{x}_{k})\| \\
    &\quad  - \frac{1}{2}\|P_{\mathcal{N}}\myvec{f}'(\myvec{z}_{k-1})^{-1}
      \myvec{f}''(\myvec{x}^*)(\myvec{z}_{k-1} - \myvec{x}^*)[P_{\mathcal{R}}(\myvec{x}^* - \myvec{x}_{k})]\| \\
    &\geq \frac{1}{2}\big(1 - \mu_0\theta\|\myvec{f}''(\myvec{x}^*)\|\big)\|P_{\mathcal{N}}(\myvec{x}^* - \myvec{x}_{k})\|.
  \end{align*}
  This together with \eqref{ineq:norm_PRyk+1-x*} and Lemma \ref{lem:zk_in_W(r,theta)} gives
  \begin{align*}
    \frac{\|P_{\mathcal{R}}(\myvec{y}_{k} - \myvec{x}^*)\|}{\|P_{\mathcal{N}}(\myvec{y}_{k} - \myvec{x}^*)\|}
    &\leq \frac{\nu_0\|\myvec{f}''(\myvec{x}^*)\|\|\myvec{z}_{k-1} - \myvec{x}^*\|\|\myvec{x}_{k} - \myvec{x}^*\|}
      {\frac{1}{2}\big(1 - \mu_0\theta\|\myvec{f}''(\myvec{x}^*)\|\big)\|P_{\mathcal{N}}(\myvec{x}^* - \myvec{x}_{k})\|} \\
    &\leq \frac{2\nu_0(1+\theta)\|\myvec{f}''(\myvec{x}^*)\|\|\myvec{z}_{k-1} - \myvec{x}^*\|}
      {1 - \mu_0\theta\|\myvec{f}''(\myvec{x}^*)\|} 
      \leq \frac{2r\nu_0(1+\theta)\|\myvec{f}''(\myvec{x}^*)\|}{1 - \mu_0\theta\|\myvec{f}''(\myvec{x}^*)\|} 
      \leq \theta,
  \end{align*}
  which means that $\myvec{y}_{k} \in \mathcal{W}(r,\theta)$.
\end{proof}

\begin{mylemma}
  \label{lem:xk+1_in_W(r,theta)}
  Assume that $\myvec{f}'(\myvec{x}^*)$ is singular.
  Let $\mathcal{W}(r,\theta)$ be defined by \eqref{set:W} 
  with $0 < \theta < 1/(\mu_0\|\myvec{f}''(\myvec{x}^*)\|)$ and 
  $$
  r = \min\{r_0, \theta(1 - \mu_0\theta\|\myvec{f}''(\myvec{x}^*)\|)/[\nu_0(1+\theta)\|\myvec{f}''(\myvec{x}^*)\|]\},
  $$
  where $r_0, \mu_0, \nu_0 > 0$ are defined in \eqref{ineq:PNf'(x)-1}.
  If $\myvec{x}_k, \myvec{y}_k \in \mathcal{W}(r,\theta)$ for some $k \geq 0$,
  then $\myvec{x}_{k+1} \in \mathcal{W}(r,\theta)$.
\end{mylemma}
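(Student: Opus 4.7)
The plan is to mirror the strategy used for $\myvec{y}_k$ in Lemma~\ref{lem:yk+1_in_W(r,theta)}, but to route the decomposition through $\myvec{y}_k - \myvec{x}^*$ rather than $\myvec{x}_k - \myvec{x}^*$ so that the ``main term'' is $\tfrac{1}{2}P_{\mathcal{N}}(\myvec{y}_k - \myvec{x}^*)$ with a componentwise nonpositive residual. The norm condition $\|\myvec{x}_{k+1} - \myvec{x}^*\| < r$ is immediate from Theorem~\ref{th:MonotoneConvergence}(ii), which gives $\myvec{0} < \myvec{x}^* - \myvec{x}_{k+1} < \myvec{x}^* - \myvec{x}_k$, so only $\|P_{\mathcal{R}}(\myvec{x}_{k+1} - \myvec{x}^*)\| \leq \theta\|P_{\mathcal{N}}(\myvec{x}_{k+1} - \myvec{x}^*)\|$ requires work.

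Using the Taylor expansion~\eqref{eq:TaylorExpansion} at $\myvec{x}_k$ together with the constant-Hessian identity $\myvec{f}'(\myvec{z}_k) - \myvec{f}'(\myvec{x}_k) = \myvec{f}''(\myvec{x}^*)(\myvec{z}_k - \myvec{x}_k)$, and noting $2(\myvec{z}_k - \myvec{x}_k) + (\myvec{x}_k - \myvec{x}^*) = \myvec{y}_k - \myvec{x}^*$, I first establish the compact identity $\myvec{x}_{k+1} - \myvec{x}^* = \tfrac{1}{2}\myvec{f}'(\myvec{z}_k)^{-1}\myvec{f}''(\myvec{x}^*)(\myvec{y}_k - \myvec{x}^*)(\myvec{x}_k - \myvec{x}^*)$. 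Applying $P_{\mathcal{R}}$, using $\|P_{\mathcal{R}}\myvec{f}'(\myvec{z}_k)^{-1}\| \leq \nu_0$ from~\eqref{ineq:PNf'(x)-1}, and invoking $\|\myvec{y}_k - \myvec{x}^*\| \leq (1+\theta)\|P_{\mathcal{N}}(\myvec{y}_k - \myvec{x}^*)\|$ (since $\myvec{y}_k \in \mathcal{W}(r,\theta)$) delivers the numerator bound $\|P_{\mathcal{R}}(\myvec{x}_{k+1} - \myvec{x}^*)\| \leq \tfrac{1}{2}\nu_0(1+\theta)\|\myvec{f}''(\myvec{x}^*)\|\,\|P_{\mathcal{N}}(\myvec{y}_k - \myvec{x}^*)\|\,\|\myvec{x}_k - \myvec{x}^*\|$.

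For the denominator I would split $\myvec{x}_k - \myvec{x}^* = (\myvec{x}_k - \myvec{z}_k) + (\myvec{z}_k - \myvec{x}^*)$ inside the bilinear form, and apply $\myvec{f}''(\myvec{x}^*)(\myvec{z}_k - \myvec{x}^*) = \myvec{f}'(\myvec{z}_k) - \myvec{f}'(\myvec{x}^*)$ together with $\myvec{f}'(\myvec{x}^*)P_{\mathcal{N}} = \myvec{0}$ to rewrite
\[
\myvec{x}_{k+1} - \myvec{x}^* = \tfrac{1}{2}\myvec{f}'(\myvec{z}_k)^{-1}\myvec{f}''(\myvec{x}^*)(\myvec{y}_k - \myvec{x}^*)(\myvec{x}_k - \myvec{z}_k) + \tfrac{1}{2}P_{\mathcal{N}}(\myvec{y}_k - \myvec{x}^*) + \tfrac{1}{2}\myvec{f}'(\myvec{z}_k)^{-1}\myvec{f}''(\myvec{x}^*)(\myvec{z}_k - \myvec{x}^*)\bigl[P_{\mathcal{R}}(\myvec{y}_k - \myvec{x}^*)\bigr].
\]
Both factors in the leading bilinear term are negative, so by bilinearity and the second inequality in~\eqref{ineq:f''(x)h1h2h3} the bracket is componentwise negative, and the nonnegativity of $\myvec{f}'(\myvec{z}_k)^{-1}$ from Theorem~\ref{th:MonotoneConvergence}(i) preserves the sign. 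Dropping this nonpositive term and flipping signs yields the same structural vector inequality as in Lemma~\ref{lem:yk+1_in_W(r,theta)}, namely $\myvec{x}^* - \myvec{x}_{k+1} \geq \tfrac{1}{2}P_{\mathcal{N}}(\myvec{x}^* - \myvec{y}_k) + \tfrac{1}{2}\myvec{f}'(\myvec{z}_k)^{-1}\myvec{f}''(\myvec{x}^*)(\myvec{z}_k - \myvec{x}^*)[P_{\mathcal{R}}(\myvec{x}^* - \myvec{y}_k)]$. Following verbatim the argument of that lemma (contraction with $\myvec{v}_1^{\top}$ from Lemma~\ref{lem:PNy}, idempotency of $P_{\mathcal{N}}$, reverse triangle inequality, and the $\|P_{\mathcal{N}}\myvec{f}'(\myvec{z}_k)^{-1}\|$ bound with the $\|\myvec{z}_k - \myvec{x}^*\|$ factors cancelling) yields $\|P_{\mathcal{N}}(\myvec{x}^* - \myvec{x}_{k+1})\| \geq \tfrac{1}{2}(1 - \mu_0\theta\|\myvec{f}''(\myvec{x}^*)\|)\|P_{\mathcal{N}}(\myvec{x}^* - \myvec{y}_k)\|$.

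Forming the ratio, the $\|P_{\mathcal{N}}(\myvec{y}_k - \myvec{x}^*)\|$ factors cancel and one is left with $\nu_0(1+\theta)\|\myvec{f}''(\myvec{x}^*)\|\,\|\myvec{x}_k - \myvec{x}^*\|/(1 - \mu_0\theta\|\myvec{f}''(\myvec{x}^*)\|) \leq \theta$, which is exactly the content of the prescribed value of $r$. I expect the main obstacle to be identifying the correct decomposition: a naive expansion around $\myvec{x}_k - \myvec{x}^*$ produces a residual of the wrong sign, and only by exploiting the specific midpoint structure $\myvec{z}_k = (\myvec{x}_k + \myvec{y}_k)/2$ together with the hypothesis $\myvec{y}_k \in \mathcal{W}(r,\theta)$ does the $\tfrac{1}{2}$-contraction on the null-space component survive.
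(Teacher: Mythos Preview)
Your proposal is correct and follows essentially the same route as the paper: you derive the same compact identity $\myvec{x}_{k+1}-\myvec{x}^* = \tfrac{1}{2}\myvec{f}'(\myvec{z}_k)^{-1}\myvec{f}''(\myvec{x}^*)(\myvec{y}_k-\myvec{x}^*)(\myvec{x}_k-\myvec{x}^*)$, bound $P_{\mathcal{R}}$ above via \eqref{ineq:PNf'(x)-1}, bound $P_{\mathcal{N}}$ below via a componentwise inequality, and close with the same ratio argument. The only variation is in the $P_{\mathcal{N}}$ lower bound: the paper first invokes Lemma~\ref{lem:f'(x)-1<f'(y)-1} to replace $\myvec{f}'(\myvec{z}_k)^{-1}$ by the smaller $\myvec{f}'(\myvec{x}_k)^{-1}$ and then uses $\myvec{f}'(\myvec{x}_k)-\myvec{f}'(\myvec{x}^*)=\myvec{f}''(\myvec{x}^*)(\myvec{x}_k-\myvec{x}^*)$, whereas you keep $\myvec{f}'(\myvec{z}_k)^{-1}$, split $\myvec{x}_k-\myvec{x}^*=(\myvec{x}_k-\myvec{z}_k)+(\myvec{z}_k-\myvec{x}^*)$, and discard the sign-definite cross-term. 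Both decompositions produce the identical lower bound $\tfrac{1}{2}(1-\mu_0\theta\|\myvec{f}''(\myvec{x}^*)\|)\|P_{\mathcal{N}}(\myvec{x}^*-\myvec{y}_k)\|$; your version has the minor advantage of not needing Lemma~\ref{lem:f'(x)-1<f'(y)-1}, and your explicit mention of the $\myvec{v}_1^{\top}$-contraction from Lemma~\ref{lem:PNy} makes the passage from the componentwise inequality to the $P_{\mathcal{N}}$-norm inequality more transparent than in the paper.
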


\begin{proof}
  It is clear from (ii) in Theorem \ref{th:MonotoneConvergence} that 
  $\|\myvec{x}_{k+1} - \myvec{x}^*\| < \|\myvec{x}_k - \myvec{x}^*\| \leq r$.
  We observe from \eqref{it:TSMNM} that
  \begin{equation}
    \label{eq:xk+1-x*}
    \myvec{x}_{k+1} - \myvec{x}^* 
    = \myvec{x}_k - \myvec{x}^* - \myvec{f}'(\myvec{z}_k)^{-1}\myvec{f}(\myvec{x}_k)
    = \myvec{f}'(\myvec{z}_k)^{-1}[\myvec{f}'(\myvec{z}_k)(\myvec{x}_k - \myvec{x}^*) - \myvec{f}(\myvec{x}_k)].
  \end{equation}
  By the Taylor expansion \eqref{eq:TaylorExpansion}, we have
  $$
  \myvec{0} = \myvec{f}(\myvec{x}^*) = \myvec{f}(\myvec{x}_k)
    + \myvec{f}'(\myvec{x}_k)(\myvec{x}^* - \myvec{x}_k)
    + \frac{1}{2}\myvec{f}''(\myvec{x}_k)(\myvec{x}^* - \myvec{x}_k)(\myvec{x}^* - \myvec{x}_k),
  $$
  which gives
  $$
  \myvec{f}(\myvec{x}_k) = - \myvec{f}'(\myvec{x}_k)(\myvec{x}^* - \myvec{x}_k)
    - \frac{1}{2}\myvec{f}''(\myvec{x}_k)(\myvec{x}^* - \myvec{x}_k)(\myvec{x}^* - \myvec{x}_k).
  $$
  Then we get that
  $$
  \myvec{f}'(\myvec{z}_k) - \myvec{f}'(\myvec{x}_k)
    = [\myvec{f}'(\myvec{x}_k) + \myvec{f}''(\myvec{x}_k)(\myvec{z}_k - \myvec{x}_k)]
      - \myvec{f}'(\myvec{x}_k)
    = \myvec{f}''(\myvec{x}_k)(\myvec{z}_k - \myvec{x}_k).
  $$
  Combining the above equation with \eqref{eq:xk+1-x*} yields
  \begin{align}
    \myvec{x}_{k+1} - \myvec{x}^* &= \myvec{f}'(\myvec{z}_k)^{-1}
    \left[\myvec{f}''(\myvec{x}_k)(\myvec{z}_k - \myvec{x}_k)(\myvec{x}_k - \myvec{x}^*) 
      + \frac{1}{2}\myvec{f}''(\myvec{x}_k)(\myvec{x}^* - \myvec{x}_k)(\myvec{x}^* - \myvec{x}_k)\right] \nonumber \\
    &= \frac{1}{2}\myvec{f}'(\myvec{z}_k)^{-1}\myvec{f}''(\myvec{x}_k)[2(\myvec{z}_k - \myvec{x}_k) + (\myvec{x}_k - \myvec{x}^*)](\myvec{x}_k - \myvec{x}^*) \nonumber \\
    &= \frac{1}{2}\myvec{f}'(\myvec{z}_k)^{-1}\myvec{f}''(\myvec{x}_k)(\myvec{y}_k - \myvec{x}^*)(\myvec{x}_k - \myvec{x}^*). \label{eq:xk+1-x*2}
  \end{align}
  Notice from Lemma \ref{lem:f''(x)hh} that $\myvec{f}''(\myvec{x})\myvec{h}\myvec{h}$ is independent of $\myvec{x}$ for any $\myvec{h} \in \RS^{2n}$.
  This implies
  $$
  P_{\mathcal{R}}(\myvec{x}_{k+1} - \myvec{x}^*)
    = \frac{1}{2}P_{\mathcal{R}}[\myvec{f}'(\myvec{z}_k)^{-1}\myvec{f}''(\myvec{x}^*)(\myvec{y}_k - \myvec{x}^*)(\myvec{x}_k - \myvec{x}^*)].
  $$
  Then \eqref{ineq:PNf'(x)-1} is applicable to obtain
  \begin{equation}
    \label{ineq:norm_PRxk+1-x*}
    \|P_{\mathcal{R}}(\myvec{x}_{k+1} - \myvec{x}^*)\| 
    \leq \frac{1}{2}\nu_0\|\myvec{f}''(\myvec{x}^*)\|\|\myvec{y}_k - \myvec{x}^*\|\|\myvec{x}_k - \myvec{x}^*\|.
  \end{equation}
  On the other hand, since $\myvec{0} < \myvec{x}_k < \myvec{z}_k < \myvec{x}^*$, 
  it follows from Lemma \ref{lem:f'(x)-1<f'(y)-1} that 
  $$
  0 < \myvec{f}'(\myvec{x}_k)^{-1} < \myvec{f}'(\myvec{z}_k)^{-1}.
  $$
  Then we can further deduce from \eqref{eq:xk+1-x*2} that
  $$
  \myvec{x}_{k+1} - \myvec{x}^* 
    < \frac{1}{2} \myvec{f}'(\myvec{x}_k)^{-1}\myvec{f}''(\myvec{x}^*)(\myvec{x}^* - \myvec{y}_k)(\myvec{x}^* - \myvec{x}_k).
  $$
  Since $\myvec{f}'(\myvec{x}_k) = \myvec{f}'(\myvec{x}^*) + \myvec{f}''(\myvec{x}^*)(\myvec{x}_k - \myvec{x}^*)$ 
  and $\myvec{f}'(\myvec{x}^*)[P_{\mathcal{N}}(\myvec{x}^* - \myvec{y}_k)] = \myvec{0}$,
  it follows that
  \begin{align*}
    \myvec{x}^* - \myvec{x}_{k+1}
      &> \frac{1}{2} \myvec{f}'(\myvec{x}_k)^{-1}[\myvec{f}'(\myvec{x}_k) - \myvec{f}'(\myvec{x}^*)](\myvec{x}^* - \myvec{y}_k) \\
      &= \frac{1}{2} \myvec{f}'(\myvec{x}_k)^{-1}[\myvec{f}'(\myvec{x}_k) - \myvec{f}'(\myvec{x}^*)]
        [P_{\mathcal{N}}(\myvec{x}^* - \myvec{y}_k) + P_{\mathcal{R}}(\myvec{x}^* - \myvec{y}_k)] \\
      &= \frac{1}{2}P_{\mathcal{N}}(\myvec{x}^* - \myvec{y}_k)
        + \frac{1}{2} \myvec{f}'(\myvec{x}_k)^{-1}\myvec{f}''(\myvec{x}^*)(\myvec{x}_k - \myvec{x}^*)[P_{\mathcal{R}}(\myvec{x}^* - \myvec{y}_k)].
  \end{align*}
  By noting that $P_{\mathcal{N}}$ is idempotent, 
  we use the reverse triangle inequality to get
  \begin{align*}
    \|P_{\mathcal{N}}(\myvec{x}^* - \myvec{x}_{k+1})\| 
    &\geq \frac{1}{2}\|P_{\mathcal{N}}(\myvec{x}^* - \myvec{y}_k)\|
      - \frac{1}{2}\|P_{\mathcal{N}}\myvec{f}'(\myvec{x}_k)^{-1}\myvec{f}''(\myvec{x}^*)(\myvec{x}_k - \myvec{x}^*)[P_{\mathcal{R}}(\myvec{x}^* - \myvec{y}_k)]\| \\
    &\geq \frac{1}{2}\|P_{\mathcal{N}}(\myvec{x}^* - \myvec{y}_k)\|
      - \frac{1}{2}\mu_0\|\myvec{f}''(\myvec{x}^*)\|\|P_{\mathcal{R}}(\myvec{x}^* - \myvec{y}_k)\| \\
    &\geq \frac{1}{2}\big(1 - \mu_0\theta\|\myvec{f}''(\myvec{x}^*)\|\big)\|P_{\mathcal{N}}(\myvec{x}^* - \myvec{y}_k)\|.
  \end{align*}
  This together with \eqref{ineq:norm_PRxk+1-x*} implies that
  \begin{align*}
    \frac{\|P_{\mathcal{R}}(\myvec{x}_{k+1} - \myvec{x}^*)\|}{\|P_{\mathcal{N}}(\myvec{x}_{k+1} - \myvec{x}^*)\|}
    &\leq \frac{\nu_0\|\myvec{f}''(\myvec{x}^*)\|\|\myvec{y}_k - \myvec{x}^*\|\|\myvec{x}_k - \myvec{x}^*\|}
      {\big(1 - \mu_0\theta\|\myvec{f}''(\myvec{x}^*)\|\big)\|P_{\mathcal{N}}(\myvec{x}^* - \myvec{y}_k)\|} \\
    &\leq \frac{\nu_0(1+\theta)\|\myvec{f}''(\myvec{x}^*)\|\|\myvec{x}_k - \myvec{x}^*\|}
      {1 - \mu_0\theta\|\myvec{f}''(\myvec{x}^*)\|}
    \leq \frac{\nu_0r(1+\theta)\|\myvec{f}''(\myvec{x}^*)\|}
    {1 - \mu_0\theta\|\myvec{f}''(\myvec{x}^*)\|} \leq \theta,
  \end{align*}
  which means that $\myvec{x}_{k+1} \in \mathcal{W}(r,\theta)$.
\end{proof}

\begin{mytheorem}
  \label{th:SingularConvergenceSetW}
  Assume that $\myvec{f}'(\myvec{x}^*)$ is singular, i.e., $\alpha = 0$ and $c = 1$.
  Let $\{\myvec{x}_k\}, \{\myvec{y}_k\}$ and $\{\myvec{z}_k\}$ 
  be the sequences generated by the two-step modified Newton method \eqref{it:TSMNM}
  starting from an initial guess $\myvec{x}_0 \in \mathcal{W}(r,\theta)$ 
  with $0 < \theta < \min\{1/(\mu_0\|\myvec{f}''(\myvec{x}^*)\|),1\}$ and 
  $$
  r = \min\{r_0, \theta(1 - \mu_0\theta\|\myvec{f}''(\myvec{x}^*)\|)/[2\nu_0(1+\theta)\|\myvec{f}''(\myvec{x}^*)\|]\},
  $$
  where constants $r_0, \mu_0, \nu_0 > 0$ are defined in \eqref{ineq:PNf'(x)-1} below.
  Then the sequences $\{\myvec{x}_k\}, \{\myvec{y}_k\}$ and $\{\myvec{z}_k\}$ remain in $\mathcal{W}(r,\theta)$,
  and the following error bound
  \begin{equation}
    \label{ineq:ConvergenceRateSetW}
    \|\myvec{x}^* - \myvec{x}_{k+1}\| 
      \leq \frac{(1+\theta)(1+\mu_0\theta\|\myvec{f}''(\myvec{x}^*)\|)}{2(1-\theta)} \|\myvec{x}^* - \myvec{x}_k\|
  \end{equation}
  holds for all $k \geq 0$.
\end{mytheorem}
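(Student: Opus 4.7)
The plan is to proceed in two stages: first show by induction that all three iterate sequences stay in the conic neighborhood $\mathcal{W}(r,\theta)$, and then derive the linear convergence estimate from a contraction on the $P_{\mathcal{N}}$-component.

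For the invariance, I would argue by induction on $k$. The base case uses only $\myvec{x}_0 \in \mathcal{W}(r,\theta)$: Lemma~\ref{lem:yk+1_in_W(r,theta)} puts $\myvec{y}_0$ into $\mathcal{W}(r,\theta)$, and then Lemma~\ref{lem:zk_in_W(r,theta)} puts $\myvec{z}_0$ into $\mathcal{W}(r,\theta)$ as well. For the inductive step, assuming $\myvec{x}_j, \myvec{y}_j, \myvec{z}_j \in \mathcal{W}(r,\theta)$ for all $j \leq k$, I would apply Lemma~\ref{lem:xk+1_in_W(r,theta)} to obtain $\myvec{x}_{k+1} \in \mathcal{W}(r,\theta)$, then Lemma~\ref{lem:yk+1_in_W(r,theta)} to get $\myvec{y}_{k+1} \in \mathcal{W}(r,\theta)$, and finally Lemma~\ref{lem:zk_in_W(r,theta)} to get $\myvec{z}_{k+1} \in \mathcal{W}(r,\theta)$. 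The radius $r$ in the theorem has been chosen as the minimum of the radii required by Lemmas~\ref{lem:yk+1_in_W(r,theta)} and \ref{lem:xk+1_in_W(r,theta)}, so every invocation is legitimate.

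For the error bound, the starting point is the identity \eqref{eq:xk+1-x*2} from the proof of Lemma~\ref{lem:xk+1_in_W(r,theta)}, namely $\myvec{x}_{k+1} - \myvec{x}^* = \tfrac{1}{2}\myvec{f}'(\myvec{z}_k)^{-1}\myvec{f}''(\myvec{x}^*)(\myvec{y}_k - \myvec{x}^*)(\myvec{x}_k - \myvec{x}^*)$. The strategy is to apply $P_{\mathcal{N}}$ to both sides and extract an explicit main term. Using $\myvec{f}''(\myvec{x}^*)(\myvec{x}_k - \myvec{x}^*) = \myvec{f}'(\myvec{x}_k) - \myvec{f}'(\myvec{x}^*)$ together with $\myvec{f}'(\myvec{x}_k) = \myvec{f}'(\myvec{z}_k) + \myvec{f}''(\myvec{x}^*)(\myvec{x}_k - \myvec{z}_k)$ (a consequence of $\myvec{f}''' \equiv \myvec{0}$), the right-hand side can be rewritten, in the spirit of the expansion used to prove Lemma~\ref{lem:yk+1_in_W(r,theta)}, as an explicit $\tfrac{1}{2}P_{\mathcal{N}}(\myvec{x}_k - \myvec{x}^*)$ term plus correction terms each paired with $P_{\mathcal{R}}(\myvec{x}_k - \myvec{x}^*)$. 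Applying $P_{\mathcal{N}}$, using $\|P_{\mathcal{N}}\myvec{f}'(\myvec{z}_k)^{-1}\| \leq \mu_0/\|\myvec{z}_k - \myvec{x}^*\|$ from \eqref{ineq:PNf'(x)-1}, the monotone comparisons $\|\myvec{y}_k - \myvec{x}^*\| \leq \|\myvec{z}_k - \myvec{x}^*\|$ and similar ratios implied by the ordering $\myvec{0} < \myvec{x}_k < \myvec{z}_k < \myvec{y}_k < \myvec{x}^*$ of Theorem~\ref{th:MonotoneConvergence}, and the cone bound $\|P_{\mathcal{R}}(\myvec{x}_k - \myvec{x}^*)\| \leq \theta\|P_{\mathcal{N}}(\myvec{x}_k - \myvec{x}^*)\|$, I expect to obtain a $P_{\mathcal{N}}$-contraction of the form $\|P_{\mathcal{N}}(\myvec{x}_{k+1} - \myvec{x}^*)\| \leq \tfrac{1}{2}(1 + \mu_0\theta\|\myvec{f}''(\myvec{x}^*)\|)\|P_{\mathcal{N}}(\myvec{x}_k - \myvec{x}^*)\|$.

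To conclude, I would chain three elementary inequalities driven by cone membership: the upper bound $\|\myvec{x}_{k+1} - \myvec{x}^*\| \leq (1+\theta)\|P_{\mathcal{N}}(\myvec{x}_{k+1} - \myvec{x}^*)\|$ from $\myvec{x}_{k+1} \in \mathcal{W}(r,\theta)$, the $P_{\mathcal{N}}$-contraction just described, and the reverse-triangle lower bound $\|\myvec{x}_k - \myvec{x}^*\| \geq (1-\theta)\|P_{\mathcal{N}}(\myvec{x}_k - \myvec{x}^*)\|$ (again from $\myvec{x}_k \in \mathcal{W}(r,\theta)$). Multiplying the three constants reproduces exactly the factor $(1+\theta)(1+\mu_0\theta\|\myvec{f}''(\myvec{x}^*)\|)/[2(1-\theta)]$ appearing in \eqref{ineq:ConvergenceRateSetW}. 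The main difficulty I anticipate is the $P_{\mathcal{N}}$-contraction step: producing the sharp factor $\mu_0\theta$ (rather than the looser $\mu_0$ that arises from a naive bound) requires decomposing $\myvec{x}_{k+1} - \myvec{x}^*$ so that every residual term is multiplied by $P_{\mathcal{R}}(\myvec{x}_k - \myvec{x}^*)$, against which the cone inequality then supplies the extra $\theta$, rather than by the full vector $\myvec{x}_k - \myvec{x}^*$.
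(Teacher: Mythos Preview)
Your two-stage plan and the final chain of cone inequalities match the paper exactly. The gap is in the decomposition you propose for the $P_{\mathcal{N}}$-contraction. Writing $\myvec{f}''(\myvec{x}^*)(\myvec{y}_k-\myvec{x}^*) = \myvec{f}'(\myvec{z}_k) + \myvec{f}''(\myvec{x}^*)(\myvec{y}_k-\myvec{z}_k) - \myvec{f}'(\myvec{x}^*)$ (or the analogous identity via $\myvec{f}'(\myvec{x}_k)$) does split off a clean $\tfrac{1}{2}P_{\mathcal{N}}(\myvec{x}_k - \myvec{x}^*)$, but it also leaves the extra term $\tfrac{1}{2}\myvec{f}'(\myvec{z}_k)^{-1}\myvec{f}''(\myvec{x}^*)(\myvec{y}_k-\myvec{z}_k)(\myvec{x}_k-\myvec{x}^*)$, which is paired with the \emph{full} vector $\myvec{x}_k - \myvec{x}^*$, not with $P_{\mathcal{R}}(\myvec{x}_k - \myvec{x}^*)$. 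After applying $P_{\mathcal{N}}$ and \eqref{ineq:PNf'(x)-1}, this extra term contributes at best $\tfrac{1}{2}\mu_0\|\myvec{f}''(\myvec{x}^*)\|(1+\theta)\|P_{\mathcal{N}}(\myvec{x}_k-\myvec{x}^*)\|$ (using $\|\myvec{y}_k-\myvec{z}_k\|\le\|\myvec{z}_k-\myvec{x}^*\|$), not the required $\tfrac{1}{2}\mu_0\theta\|\myvec{f}''(\myvec{x}^*)\|\|P_{\mathcal{N}}(\myvec{x}_k-\myvec{x}^*)\|$. So the constant in \eqref{ineq:ConvergenceRateSetW} is not reached; you anticipated this difficulty but your proposed rewrite does not resolve it.

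The paper avoids the cross term altogether by a different manoeuvre: it first uses the componentwise monotonicity (third case of \eqref{ineq:f''(x)h1h2h3}, combined with $\myvec{f}'(\myvec{z}_k)^{-1}\ge 0$) to replace $\myvec{y}_k - \myvec{x}^*$ by $\myvec{z}_k - \myvec{x}^*$ directly in the identity \eqref{eq:xk+1-x*2}, obtaining the componentwise bound
\[
\myvec{x}^* - \myvec{x}_{k+1} < \tfrac{1}{2}\,\myvec{f}'(\myvec{z}_k)^{-1}\myvec{f}''(\myvec{x}^*)(\myvec{z}_k - \myvec{x}^*)(\myvec{x}^* - \myvec{x}_k).
\]
Now $\myvec{f}''(\myvec{x}^*)(\myvec{z}_k - \myvec{x}^*) = \myvec{f}'(\myvec{z}_k) - \myvec{f}'(\myvec{x}^*)$ cancels perfectly against $\myvec{f}'(\myvec{z}_k)^{-1}$, and after splitting $\myvec{x}^* - \myvec{x}_k$ into $P_{\mathcal{N}}+P_{\mathcal{R}}$ parts the single residual term is automatically $\tfrac{1}{2}\myvec{f}'(\myvec{z}_k)^{-1}\myvec{f}''(\myvec{x}^*)(\myvec{z}_k-\myvec{x}^*)[P_{\mathcal{R}}(\myvec{x}^*-\myvec{x}_k)]$, yielding exactly $\|P_{\mathcal{N}}(\myvec{x}^* - \myvec{x}_{k+1})\| \le \tfrac{1}{2}(1+\mu_0\theta\|\myvec{f}''(\myvec{x}^*)\|)\|P_{\mathcal{N}}(\myvec{x}^* - \myvec{x}_k)\|$. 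The missing idea, in short, is to exploit the componentwise ordering $\myvec{z}_k < \myvec{y}_k$ \emph{before} doing the algebraic splitting, so that the operator being inverted and the one appearing in $\myvec{f}''(\myvec{x}^*)(\cdot-\myvec{x}^*)$ coincide from the outset.
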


\begin{proof}
  It follows from Lemmas \ref{lem:zk_in_W(r,theta)}, \ref{lem:yk+1_in_W(r,theta)}
  and \ref{lem:xk+1_in_W(r,theta)} that the sequences $\{\myvec{x}_k\}, \{\myvec{y}_k\}$
  and $\{\myvec{z}_k\}$ are all contained in $\mathcal{W}(r,\theta)$.
  It remains to show the error bound \eqref{ineq:ConvergenceRateSetW}.
  By the third inequality in \eqref{ineq:f''(x)h1h2h3}, we obtain from \eqref{eq:xk+1-x*2} that
  \begin{align*}
    \myvec{x}^* - \myvec{x}_{k+1}
      &= \frac{1}{2}\myvec{f}'(\myvec{z}_k)^{-1}\myvec{f}''(\myvec{x}^*)(\myvec{y}_k - \myvec{x}^*)(\myvec{x}^* - \myvec{x}_k) \\
      &< \frac{1}{2}\myvec{f}'(\myvec{z}_k)^{-1}\myvec{f}''(\myvec{x}^*)(\myvec{z}_k - \myvec{x}^*)(\myvec{x}^* - \myvec{x}_k) \\
      &= \frac{1}{2}\myvec{f}'(\myvec{z}_k)^{-1}[\myvec{f}'(\myvec{z}_k) - \myvec{f}'(\myvec{x}^*)]
      [P_{\mathcal{N}}(\myvec{x}^* - \myvec{x}_k) + P_{\mathcal{R}}(\myvec{x}^* - \myvec{x}_k)]  \\
      &= \frac{1}{2}P_{\mathcal{N}}(\myvec{x}^* - \myvec{x}_k)
        + \frac{1}{2}\myvec{f}'(\myvec{z}_k)^{-1}\myvec{f}''(\myvec{x}^*)(\myvec{z}_k - \myvec{x}^*)[P_{\mathcal{R}}(\myvec{x}^* - \myvec{x}_k)].
  \end{align*}
  By noting that $P_{\mathcal{N}}$ is idempotent, it follows from \eqref{ineq:PNf'(x)-1} that
  $$
  \|P_{\mathcal{N}}(\myvec{x}^* - \myvec{x}_{k+1})\|
    \leq \frac{1}{2}\big(1 + \mu_0\theta\|\myvec{f}''(\myvec{x}^*)\|\big)\|P_{\mathcal{N}}(\myvec{x}^* - \myvec{x}_k)\|.
  $$
  Since
  $$
  \|\myvec{x}^* - \myvec{x}_k\|
    \geq \|P_{\mathcal{N}}(\myvec{x}^* - \myvec{x}_k)\| - \|P_{\mathcal{R}}(\myvec{x}^* - \myvec{x}_k)\|
    \geq (1 - \theta)\|P_{\mathcal{N}}(\myvec{x}^* - \myvec{x}_k)\|,
  $$
  we infer that 
  \begin{align*}
    \frac{\|\myvec{x}^* - \myvec{x}_{k+1}\|}{\|\myvec{x}^* - \myvec{x}_k\|}
    &\leq \frac{(1 + \theta)\|P_{\mathcal{N}}(\myvec{x}^* - \myvec{x}_{k+1})\|}
      {(1 - \theta)\|P_{\mathcal{N}}(\myvec{x}^* - \myvec{x}_k)\|}
    \leq \frac{(1+\theta)(1+\mu_0\theta\|\myvec{f}''(\myvec{x}^*)\|)}{2(1-\theta)},
  \end{align*}
  which yields the desired error bound \eqref{ineq:ConvergenceRateSetW}.
\end{proof}

\begin{myremark}
  Theorem \ref{th:SingularConvergenceSetW} 
  says that the two-step modified Newton method \eqref{it:TSMNM} is expected to exhibit
  the slow convergence behavior parallel to the null space directions.
\end{myremark}

\begin{myremark}
  We point out that much of the recent work on the theory of singular equations 
  has focused on weakening the assumptions imposed on the singularities, 
  often by employing 2-regularity \cite{OberlinW2009,IzmailovKS2018a,IzmailovKS2018b,FischerIS2021}. 
  The convergence of the two-step modified Newton method under weaker assumptions, 
  such as 2-regularity, remains a topic for future research. 
\end{myremark}

\section{Numerical experiments}
\label{sec:NumericalExperiments}

In this section, we provide some numerical examples to illustrate the effectiveness of the proposed algorithm.
Below are the algorithms being tested, with abbreviations corresponding to the table columns and figure captions.

\begin{itemize}
  \item TSMNM (for the two-step modified Newton method) is our implementation of Algorithm \ref{alg:TSMNM}.
  \item NM (for the Newton method) is the algorithm from \cite{Lu2005b} 
  by using the standard Newton method.
  \item TSNM1 is the algorithm from \cite{LinBW2008} by using a two-step Newton method.
  \item TSNM2 is the algorithm from \cite{LingLL2022} by using another two-step Newton method.
  \item NSM$(m)$ is the algorithm from \cite{LinB2008} by using the 
  Newton-Shamanskii method, which is a family of iterative methods derived from Newton's method
  that converge with order $m + 1$. For $m=1$, it reduces to the Newton method;
  for $m=2$, it reduces to the two-step Newton method studied in \cite{LingLL2022}.
  \item FPI is the algorithm from \cite{Lu2005a} by using the fixed-point iteration.
  \item NBJ is the nonlinear block Jacobi iteration algorithm proposed in \cite{BaiGL2008}.
  \item NBGS is the nonlinear block Gauss-Seidel iteration algorithm presented in \cite{BaiGL2008}.
\end{itemize}
As noted in Remark \ref{remark:MonotoneConvergence}, 
the condition $\myvec{f}(\myvec{x}_0) < \myvec{0}$, 
which guarantees the monotone convergence of the two-step modified Newton method 
as stated in Theorem \ref{th:MonotoneConvergence}, 
is satisfied when the initial point is chosen as $\myvec{x}_0 = \myvec{0}$.
This initial point is used for all the algorithms considered above.
As Example 5.2 in \cite{GuoL2000b}, the constants $c_i$ and $\omega_i$ are determined 
using a numerical quadrature formula on the interval $[0,1]$.
This involves dividing the interval into $n/4$ subintervals of equal length
and employing Gauss-Legendre quadrature with four nodes on each subinterval.
All algorithms were implemented and executed in 64-bit version of MATLAB R2019b 
on a laptop equipped with Intel(R) Core(TM) i7-8550U 1.80GHz CPU and 16 GB memory.
In light of the convergence results in Corollary \ref{cor:NonsingularQuadraticConvergence},
we use the stopping criterion in our implementations:
$$
  \text{RES} := \max\left\{\frac{\|\myvec{u}_{k+1} - \myvec{u}_k\|_{\infty}}{\|\myvec{u}_{k+1}\|_{\infty}},
  \frac{\|\myvec{v}_{k+1} - \myvec{v}_k\|_{\infty}}{\|\myvec{v}_{k+1}\|_{\infty}}\right\}
  \leq n\cdot\texttt{eps},
$$
where $n$ is the order of matrix $A$ given in \eqref{mat:ABCD} 
and $\texttt{eps} = 2^{-52} \approx 2.2204 \times 10^{-16}$ is the machine epsilon.
In our implementation of Newton-type methods,
we utilize MATLAB's \texttt{lu} function to factorize the coefficient matrices,
and solve the resulting triangular systems using \texttt{linsolve} with options specified by \texttt{opts}.
The CPU time (in seconds) is computed by using MATLAB's \texttt{tic/toc} commands.
Each numerical experiment is repeated 10 times and the results are averaged to produce the time displayed in the tables and figures.
Moreover, we use ``IT" to denote the number of iterations.

To begin, we perform an experiment aimed at verifying the results of Corollary \ref{cor:NonsingularQuadraticConvergence}.
Seven distinct pairs of $(\alpha,c)$ are chosen to examine their convergence behavior. 
Figure \ref{fig:IterHistory_TSMNM} presents the iteration histories of TSMNM
for problem sizes $n = 1024, 2048$ and $4096$. 
It can be observed that faster convergence is achieved as the quantity $c(1+\alpha)$ becomes smaller.
It is worth noting that the convergence criterion $c(1+\alpha) \leq 1/3$ 
in Corollary \ref{cor:NonsingularQuadraticConvergence} is a sufficient but not necessary condition 
to guarantee at least quadratic convergence of TSMNM. 
As shown in Figure \ref{fig:IterHistory_TSMNM}, 
when $(\alpha,c) = (1/2,1/3)$, which yields $c(1+\alpha) = 1/2$, 
TSMNM exhibits a convergence rate comparable to the case $(1/4,4/15)$, 
where the criterion is satisfied exactly. 
This observation suggests that the convergence criterion may be further weakened, 
though refining it remains analytically challenging due to the complexity of the convergence analysis.

\begin{figure}
  \centering
  \subfigure{\includegraphics[width=0.42\textwidth]{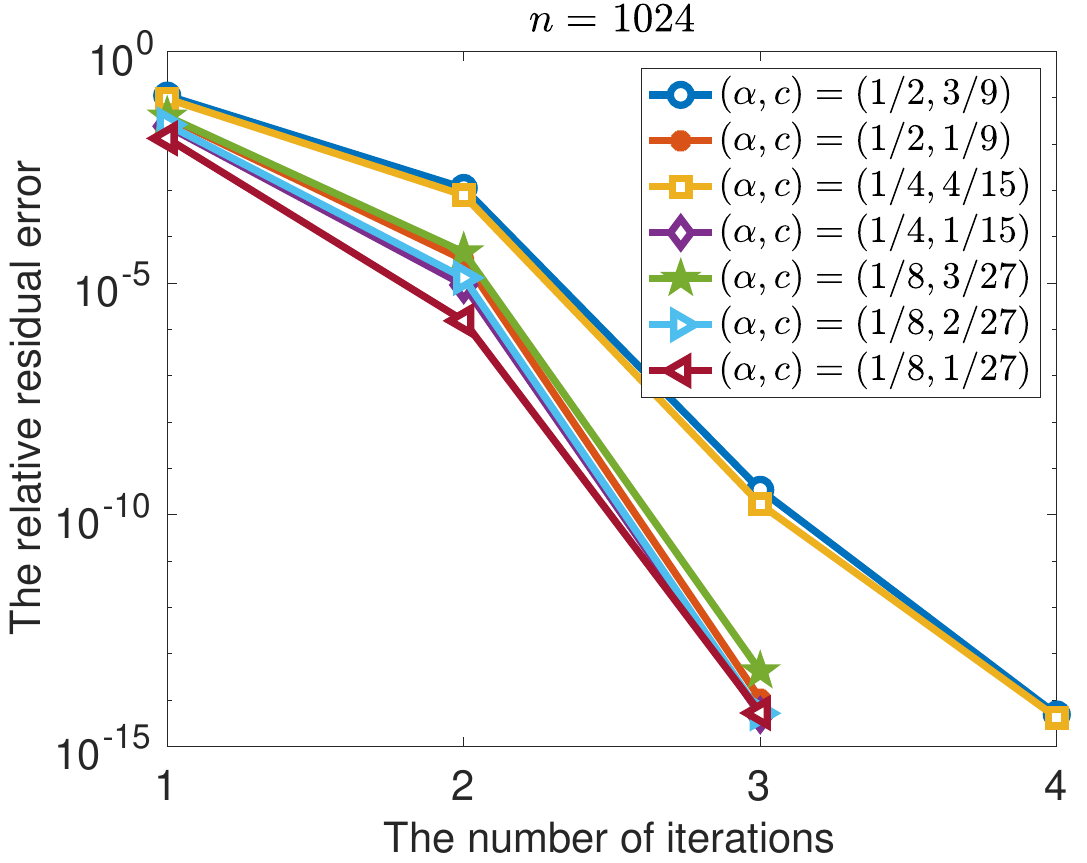}}\quad
  \subfigure{\includegraphics[width=0.42\textwidth]{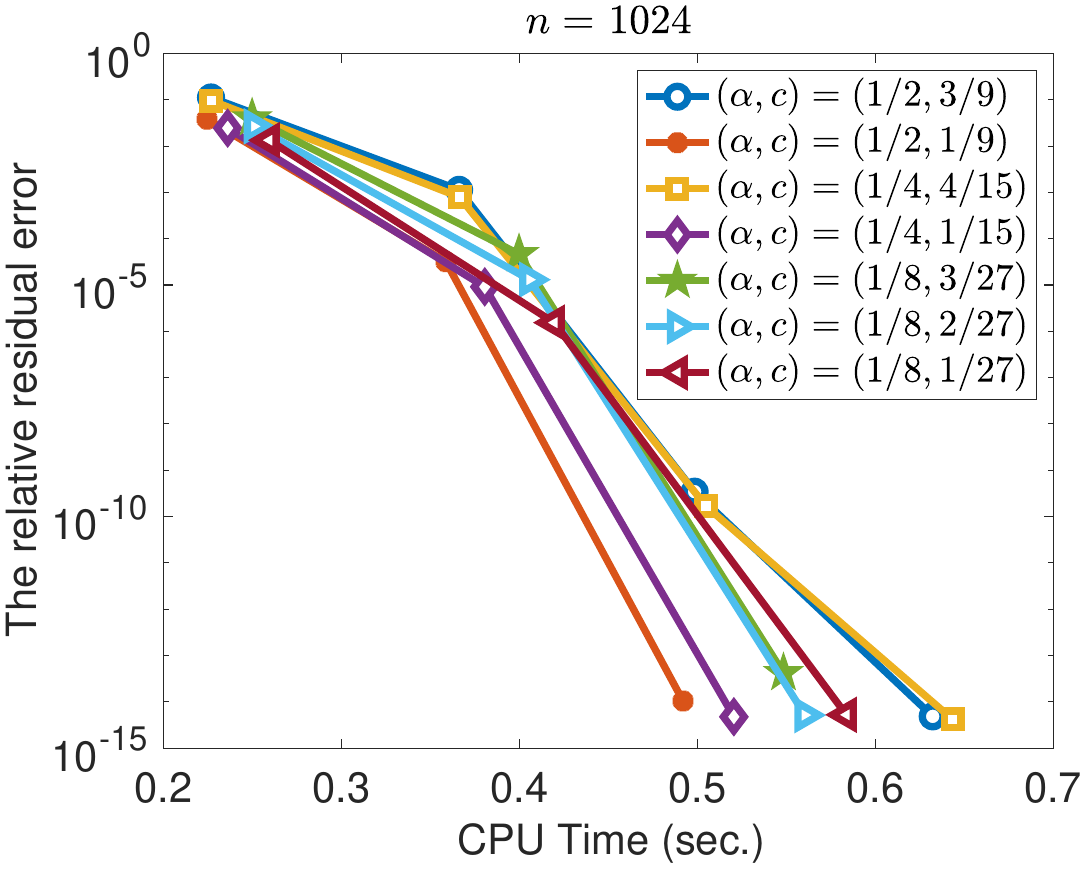}} \\
  \subfigure{\includegraphics[width=0.42\textwidth]{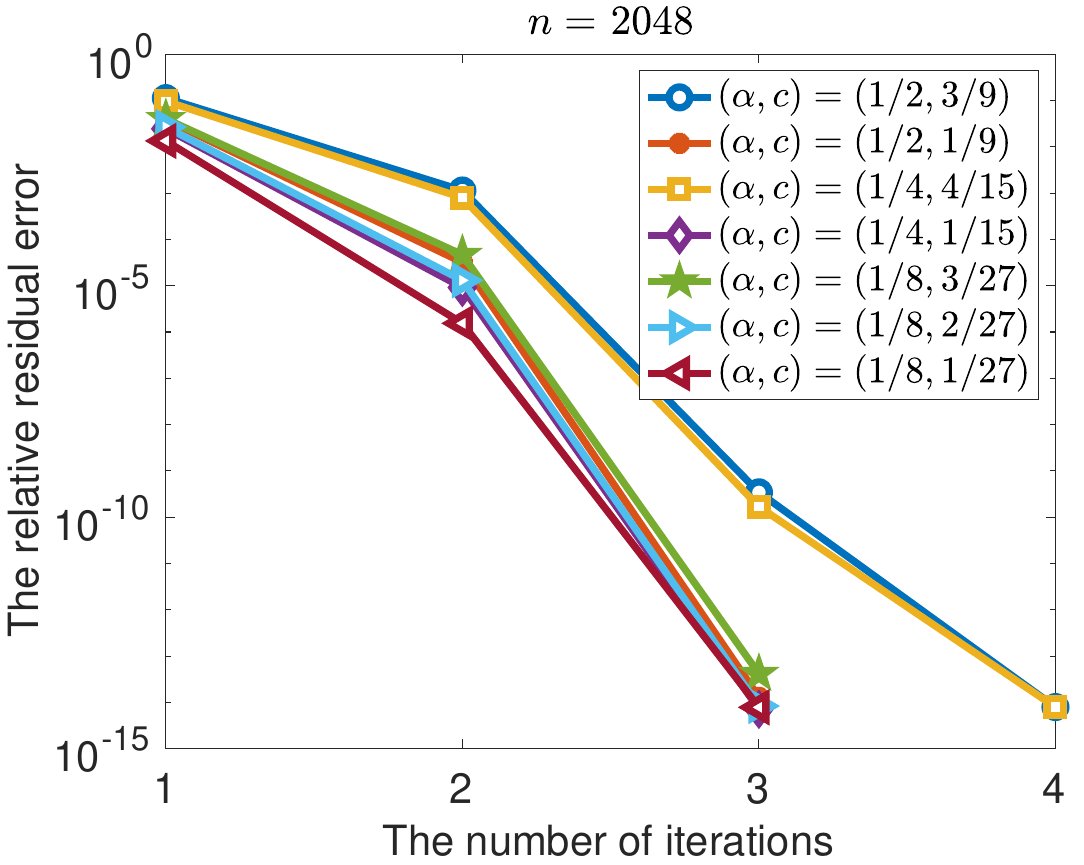}}\quad
  \subfigure{\includegraphics[width=0.42\textwidth]{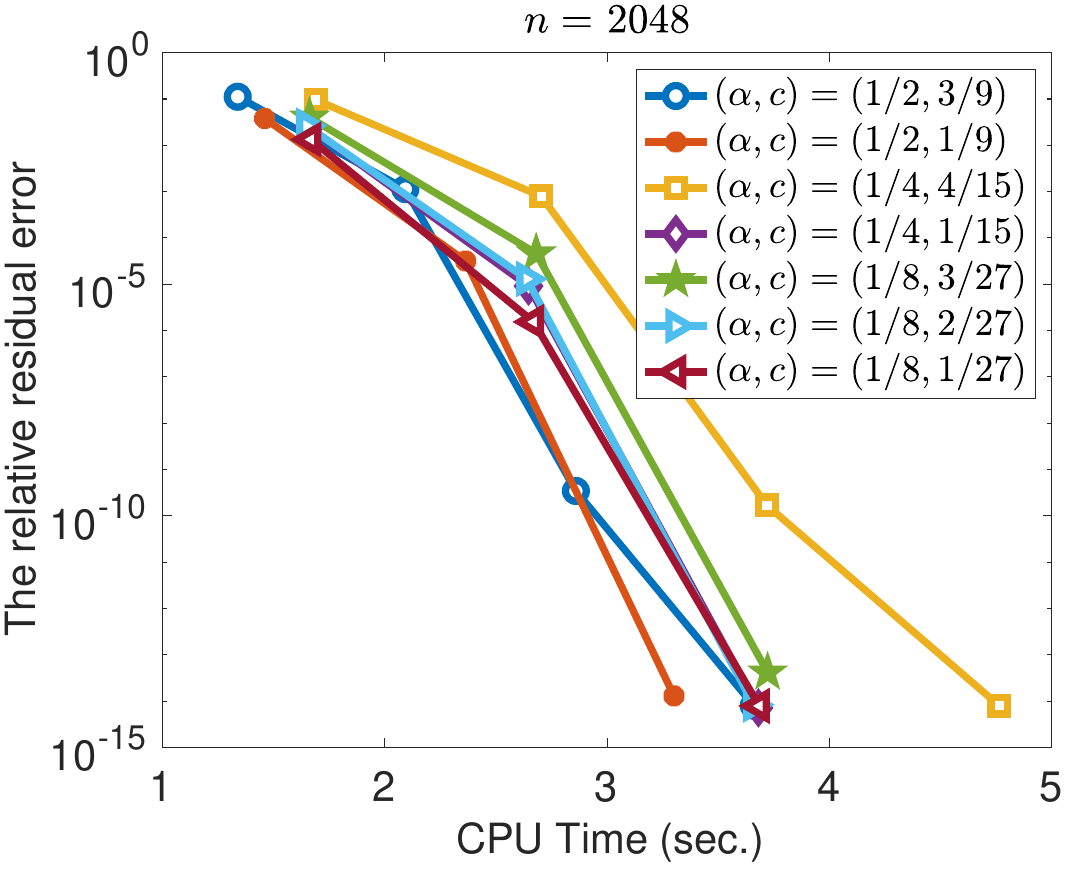}} \\
  \subfigure{\includegraphics[width=0.42\textwidth]{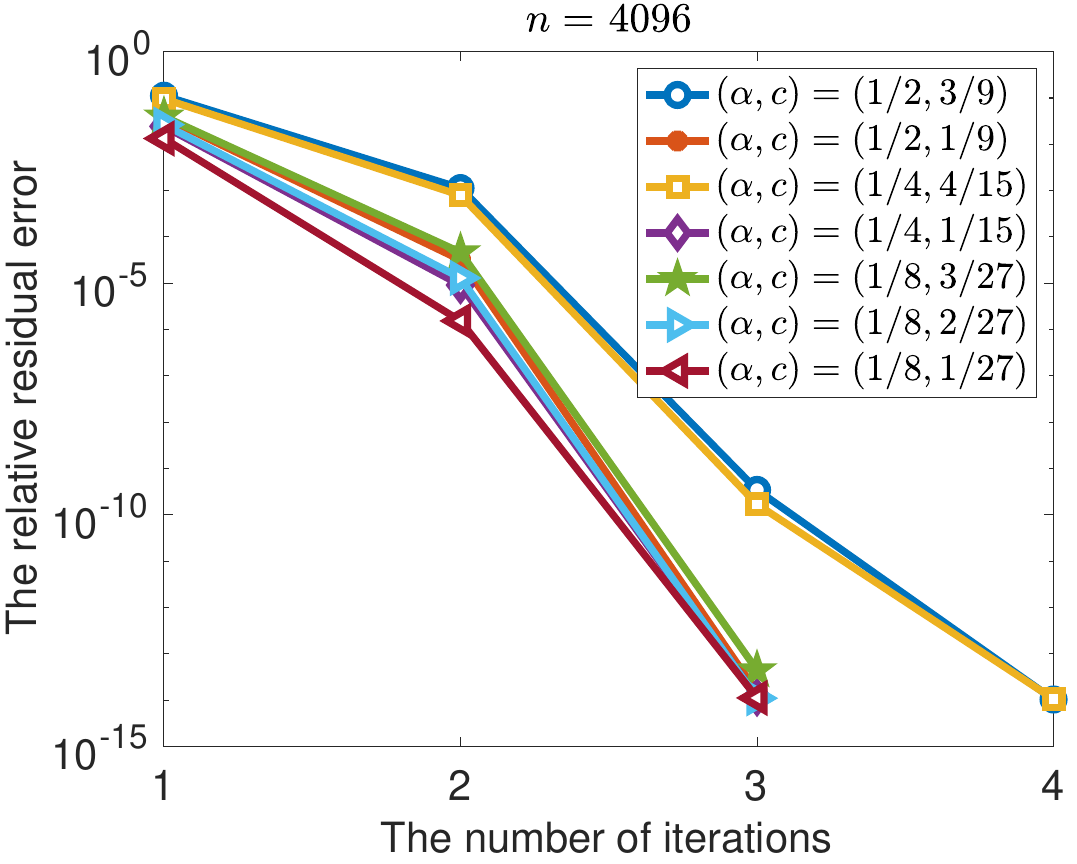}}\quad
  \subfigure{\includegraphics[width=0.42\textwidth]{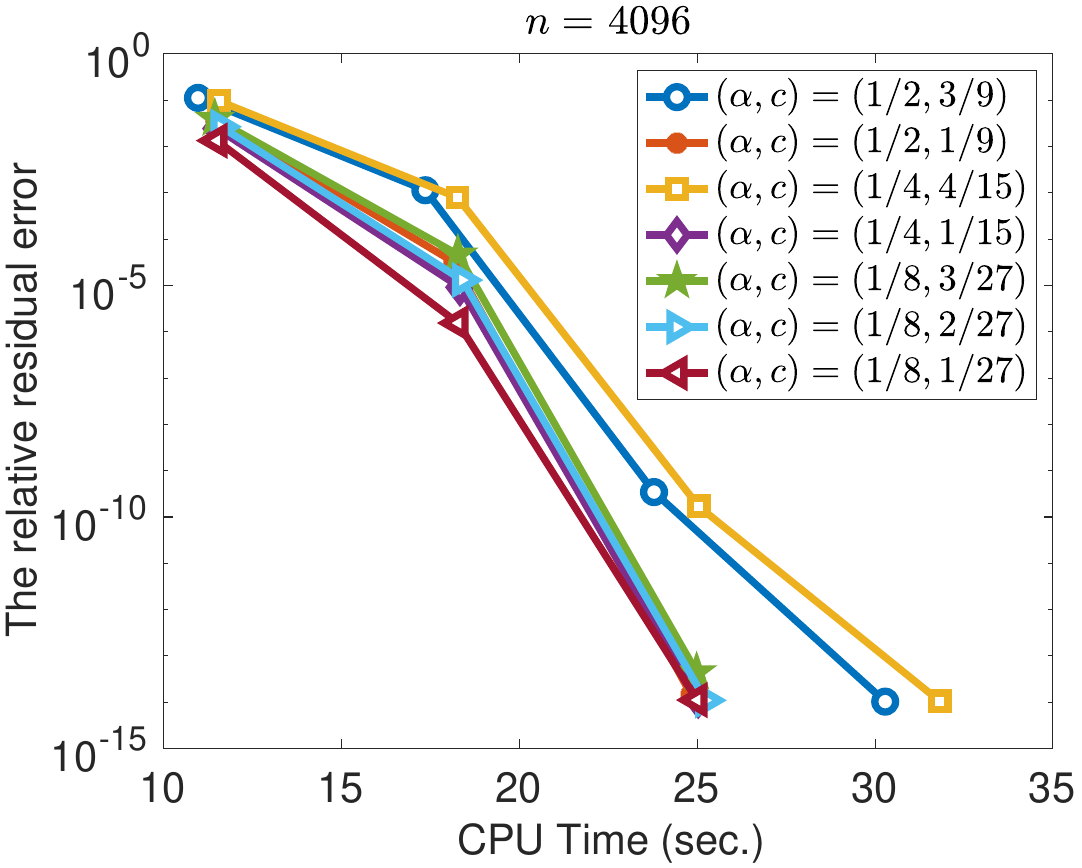}} \\
  \caption{The iteration histories of TSMNM for various $(\alpha, c)$ when the problem size $n = 1024, 2048, 4096$, respectively.}
  \label{fig:IterHistory_TSMNM}
\end{figure}

Since Theorems \ref{th:SingularConvergenceSetK} and \ref{th:SingularConvergenceSetW} 
concern local convergence of TSMNM 
and the minimal positive solution to the nonlinear equation \eqref{eq:f(x)=0} 
is usually unavailable, direct verification is impractical. 
Instead, we provide indirect verification of these results through comparative numerical 
experiments with other effective methods.
Recall that the convergence of Newton's method for systems with singular Jacobians 
at the solution has been shown to be locally linear \cite{Reddien1978}, 
whereas it achieves quadratic convergence for nonsingular problems \cite{Wang2000a,Lu2005b}.
In comparison, the following numerical experiments demonstrate that TSMNM achieves superquadratic
convergence for nonsingular problems and superlinear convergence for singular problems.

Let us first consider the normal case $(\alpha,c) = (0.5,0.5)$.
Figure \ref{fig:IterHistory_alphaC0.5} presents the iteration histories with the problem size $n = 1024, 2048, 4096, 8192$.
It shows that the TSMNM performed comparably or better than the existing methods.
One might notice that the number of iterations for all algorithms seems to be independent of the problem size.

\begin{figure}
  \centering
  \subfigure{\includegraphics[width=0.42\textwidth]{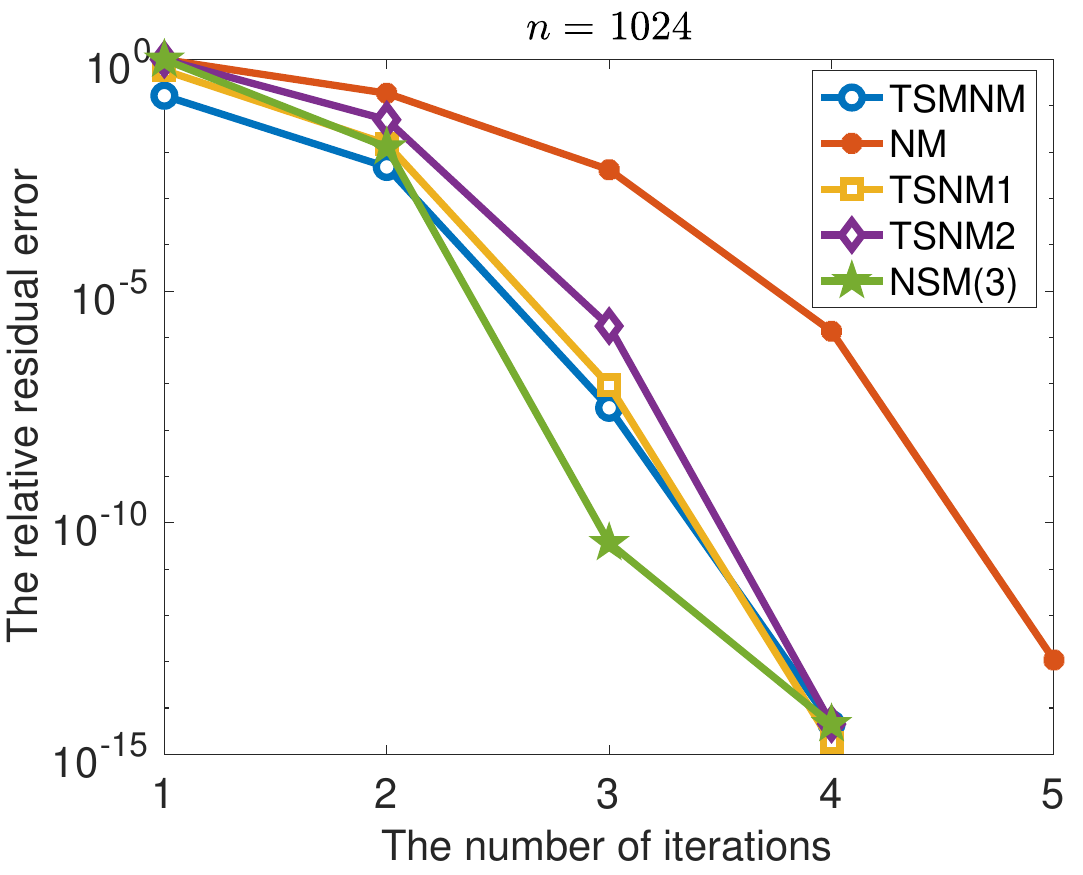}}\quad
  \subfigure{\includegraphics[width=0.42\textwidth]{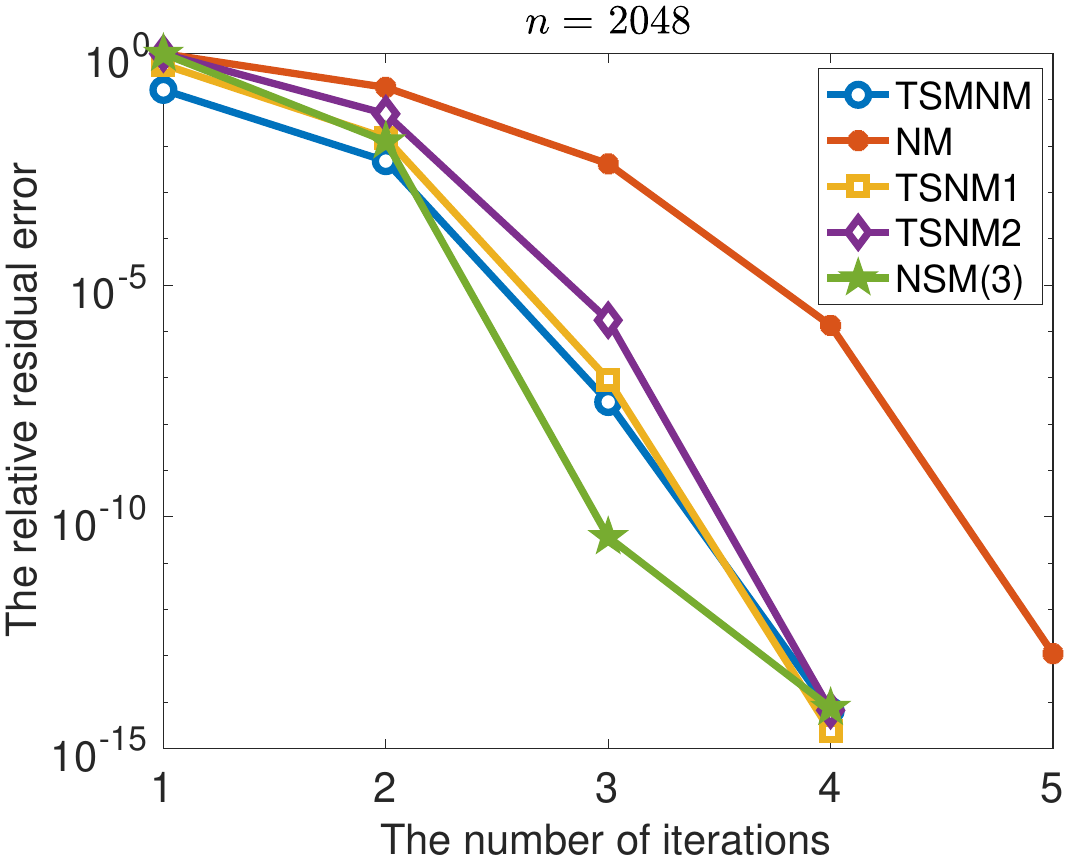}} \\
  \subfigure{\includegraphics[width=0.42\textwidth]{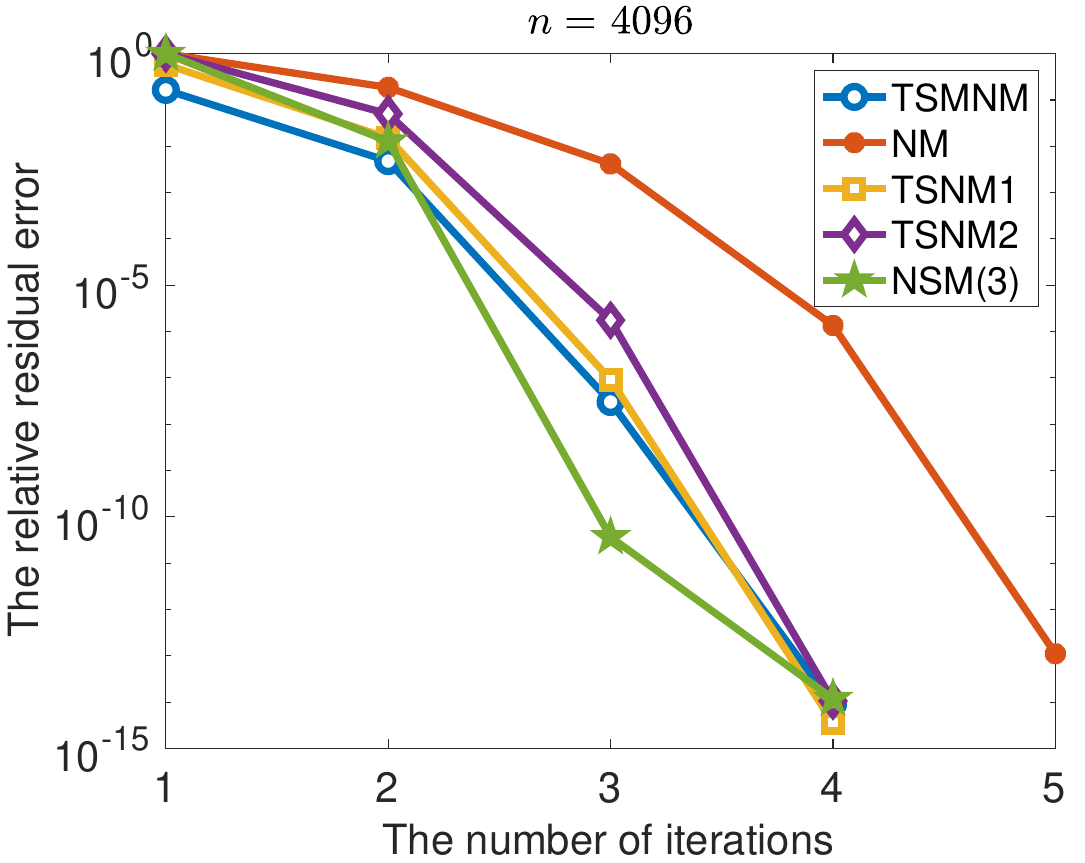}}\quad
  \subfigure{\includegraphics[width=0.42\textwidth]{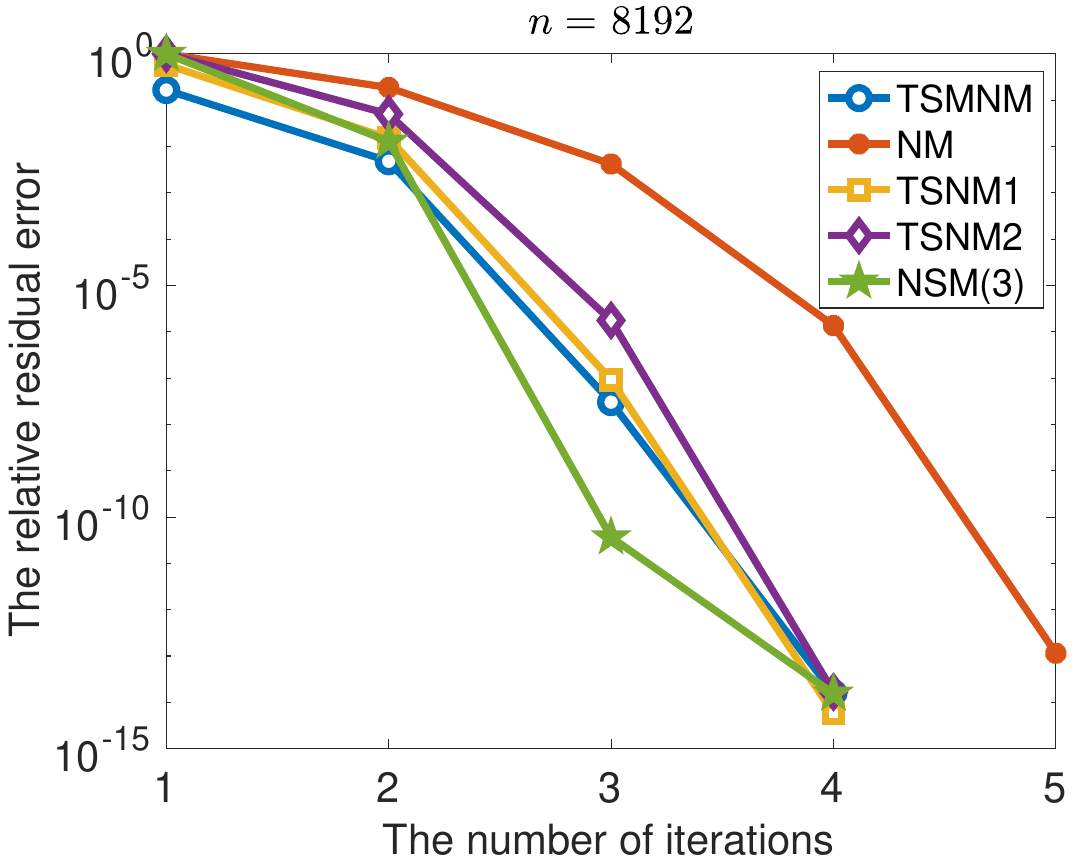}} \\
  \caption{The iteration histories for $(\alpha, c) = (0.5,0.5)$ when the problem size $n = 1024, 2048, 4096, 8192$, respectively.}
  \label{fig:IterHistory_alphaC0.5}
\end{figure}

Figure \ref{fig:IterHistory_alphaC10-4} shows the iteration histories for the problem size $n = 1024, 2048, 4096, 8192$
in a nearly singular case $(\alpha, c) = (10^{-4},1-10^{-4})$.
We see that the TSMNM achieves fewer iterations than NM,
although it requires an equal or greater number of iterations than 
TSNM1, TSNM2 and NSM(3).
It is not surprising that the computationally more expensive 
TSNM1, TSNM2 and NSM(3)
often require fewer iterations than the TSMNM.
Indeed, TSNM1 and TSNM2 are two-step Newton-type iterative methods with cubic convergence 
under regular differentiability conditions (See \cite{LingLL2022,LinBJ2010,ChunSN2011} for more details),
while NSM(3) achieves fourth-order convergence (See \cite{Kelley1986} for more details).
In contrast, TSMNM demonstrates superquadratic convergence, as established in \cite{Potra2017,CardenasCS2020,CardenasCS2022}.

\begin{figure}
  \centering
  \subfigure{\includegraphics[width=0.42\textwidth]{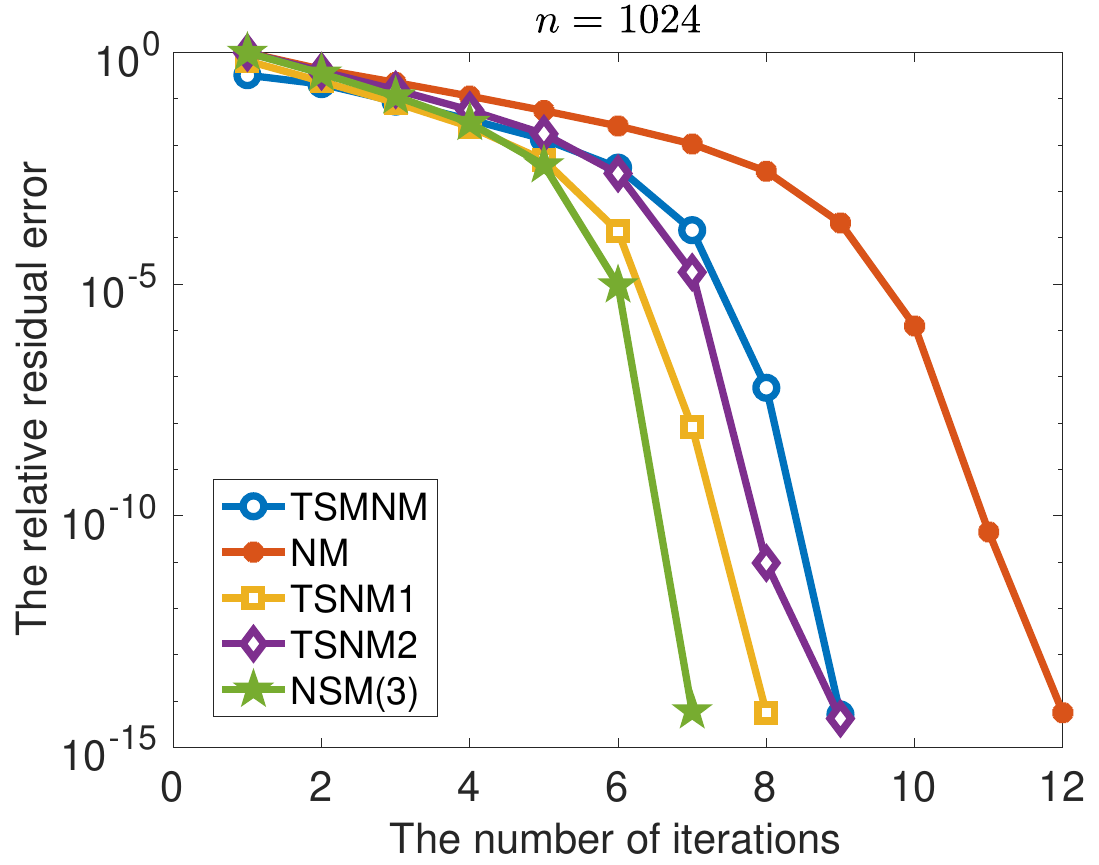}}\quad
  \subfigure{\includegraphics[width=0.42\textwidth]{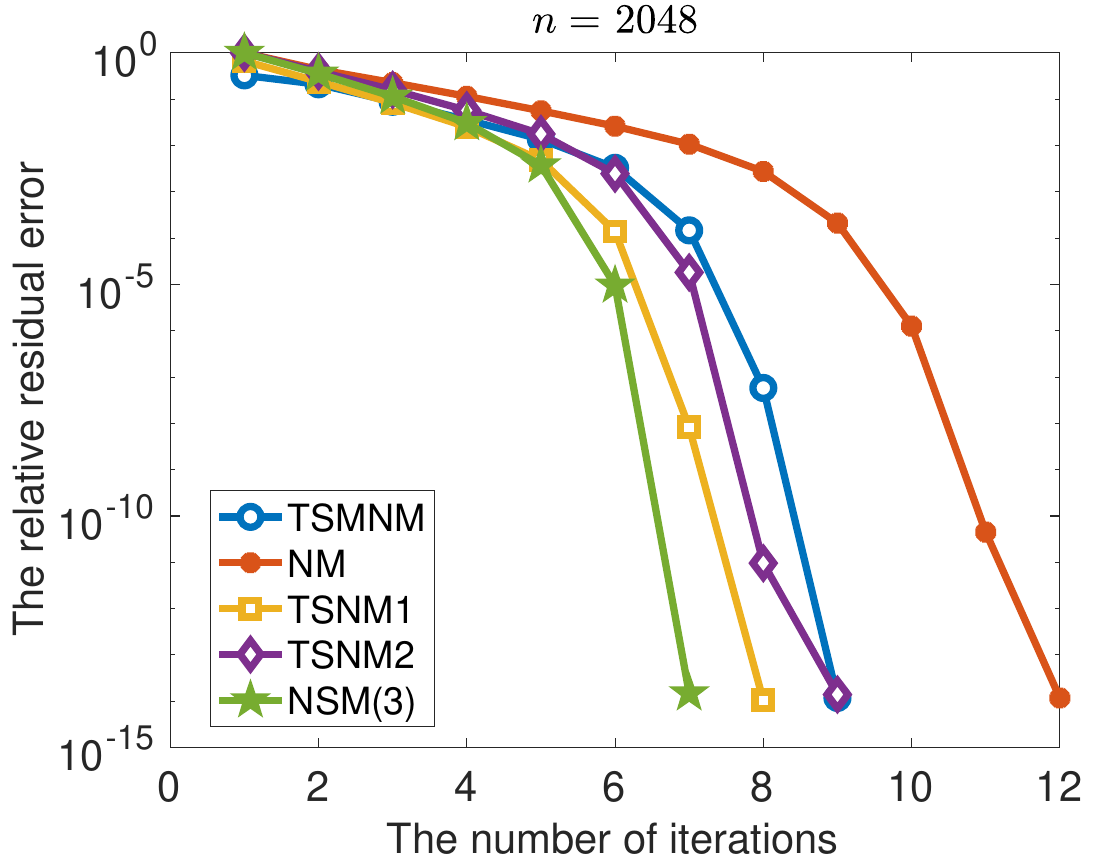}} \\
  \subfigure{\includegraphics[width=0.42\textwidth]{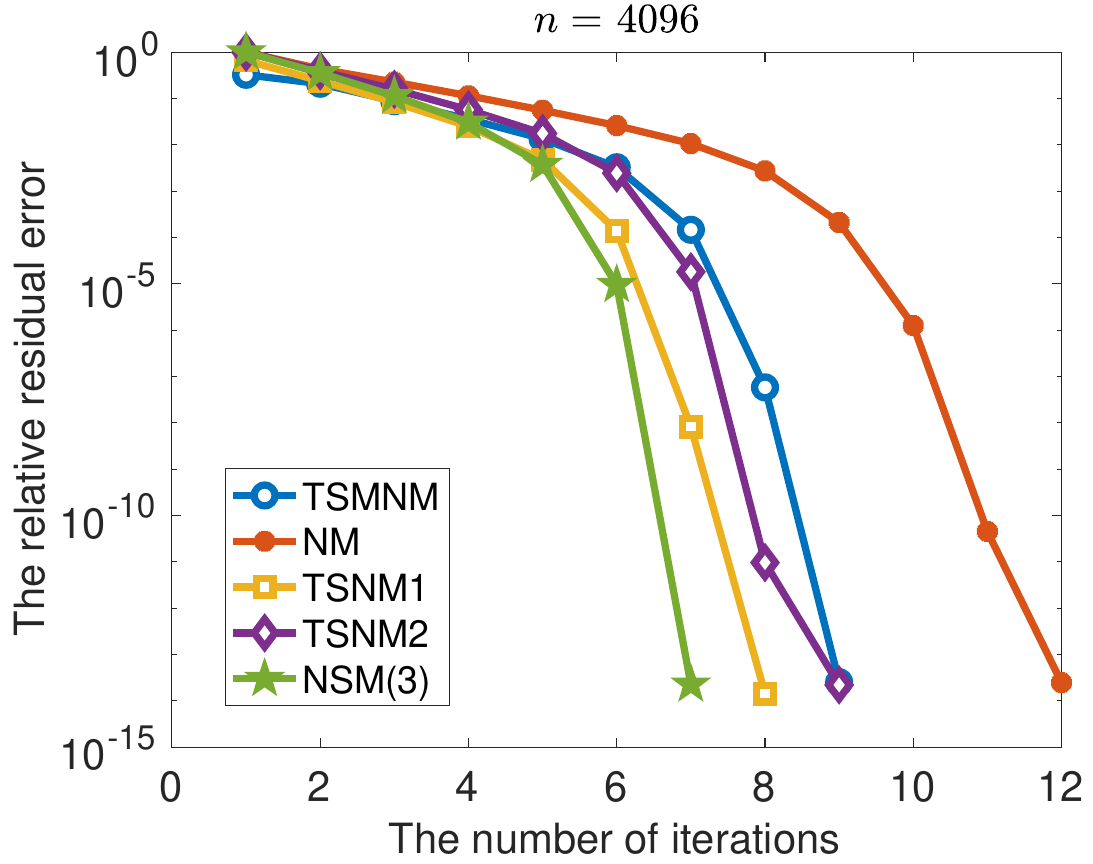}}\quad
  \subfigure{\includegraphics[width=0.42\textwidth]{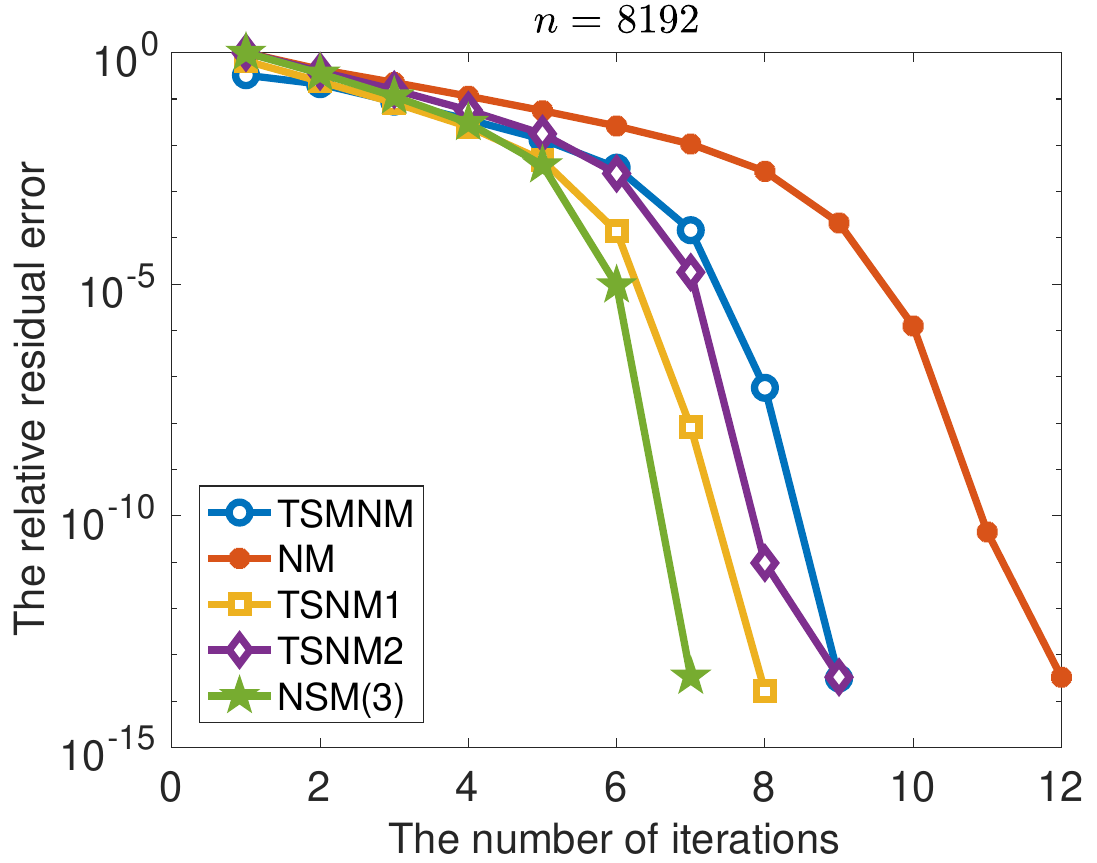}} \\
  \caption{The iteration histories for $(\alpha, c) = (10^{-4},1-10^{-4})$ when the problem size $n = 1024, 2048, 4096, 8192$, respectively.}
  \label{fig:IterHistory_alphaC10-4}
\end{figure}

We should note that the results in 
Figures \ref{fig:IterHistory_alphaC0.5} and \ref{fig:IterHistory_alphaC10-4}
do not imply that TSNM1, TSNM2 and NSM(3) are superior to TSMNM.
Tables \ref{tab:n1024}, \ref{tab:n2048}, \ref{tab:n4096} and \ref{tab:n8192}
provide the overall numerical results on eight cases for the problem sizes $n = 1024, 2048, 4096, 8192$, respectively.
These tables show that TSMNM outperforms NM in terms of the number of iterations
and performs comparable performance to TSNM1 and TSNM2 in terms of the number of iterations and the desired accuracy.
In particular, TSMNM has a significant advantage over other Newton-type methods
in terms of CPU time for $n = 2048, 4096, 8192$.
This advantage is further illustrated in Figures 
\ref{fig:IterHistory_alphaC10-3}, \ref{fig:IterHistory_alphaC10-5} and \ref{fig:IterHistory_alphaC10-7},
which present the iteration histories for the nearly singular cases 
$(\alpha, c) = (10^{-3},1-10^{-3})$, $(10^{-5},1-10^{-5})$ and $(10^{-7},1-10^{-7})$, respectively.

\begin{sidewaystable}
  \centering
  \caption{Numerical results for $n = 1024$}
  \label{tab:n1024}
  \begin{tabular}{cccccccccccc}
    \toprule
    $(\alpha,c)$ & Item & TSMNM & NM & TSNM1 & TSNM2 & NSM(3) & NSM(5) & NSM(10) & FPI & NBJ & NBGS \\
    \midrule
    (0.9,0.1) & IT  & 3          & 4          & 3          & 3          & 3          & 3          & 2          & 9          & 7 & 5     \\
              & CPU & 0.5902     & 0.9155     & 0.9362     & 0.8552     & 0.9880     & 1.3351     & 1.6072     & 0.0429     & 0.0367 & 0.0442 \\
              & RES & 5.0302e-15 & 5.6864e-15 & 1.9684e-15 & 4.8115e-15 & 6.3425e-15 & 5.2489e-15 & 4.1554e-15 & 6.3425e-15 & 6.2987e-14 & 4.4191e-16 \\
    (0.7,0.3) & IT  & 4          & 5          & 4          & 4          & 3          & 3          & 3          & 14         & 11 & 7  \\
              & CPU & 0.6626     & 1.0576     & 1.3547     & 1.2645     & 1.1020     & 1.4683     & 2.2202     & 0.0365     & 0.0340 & 0.0369  \\
              & RES & 4.4536e-15 & 4.6561e-15 & 1.6195e-15 & 5.0609e-15 & 5.6683e-15 & 4.2512e-15 & 5.6683e-15 & 1.3887e-13 & 3.1175e-14 & 1.0616e-15  \\
    (0.3,0.7) & IT  & 4          & 6          & 4          & 5          & 4          & 3          & 3          & 34         & 21 & 12  \\
              & CPU & 0.6611     & 1.2143     & 1.2925     & 1.3894     & 1.2966     & 1.3338     & 2.3527     & 0.0497     & 0.0412 & 0.0397   \\
              & RES & 1.1668e-13 & 5.3453e-15 & 1.9854e-15 & 4.1235e-15 & 4.8872e-15 & 6.5671e-15 & 3.8181e-15 & 1.1729e-13 & 2.0832e-13 & 4.8971e-14 \\
    (0.1,0.9) & IT  & 5          & 7          & 5          & 5          & 4          & 4          & 3          & 71         & 39 & 21   \\
              & CPU & 0.9125     & 1.5599     & 1.5355     & 1.5290     & 1.3759     & 1.9195     & 2.3689     & 0.0693     & 0.0506 & 0.0417  \\
              & RES & 4.7281e-15 & 4.0189e-15 & 1.7730e-15 & 4.6099e-15 & 2.2057e-13 & 4.1371e-15 & 4.3735e-15 & 1.9562e-13 & 1.9586e-13 & 1.2689e-13 \\
    ($10^{-3}$,$1-10^{-3}$) & IT & 8 & 10 & 7 & 8 & 6 & 5 & 4 & 727  & 335 & 173   \\
              & CPU & 1.2362     & 2.0296     & 2.1694     & 2.2245     & 2.0141     & 2.2014     & 2.9741     & 0.4248     & 0.2122 & 0.1258  \\
              & RES & 3.3827e-15 & 3.7049e-15 & 3.0635e-15 & 3.3827e-15 & 1.0631e-14 & 4.0271e-15 & 7.4098e-15 & 2.2326e-13 & 2.1654e-13 & 2.1654e-13 \\
    ($10^{-5}$,$1-10^{-5}$) & IT & 11 & 13 & 9 & 10 & 8 & 7 & 6 & 5944  & 2697 & 1397   \\
              & CPU & 1.9283     & 2.7065     & 2.8683     & 2.8955     & 2.6864     & 3.1833    & 4.5515     & 3.2460     & 1.4608 & 0.7996  \\
              & RES & 5.3754e-15 & 6.8038e-14 & 1.1519e-14 & 8.1400e-15 & 5.6826e-15 & 5.9898e-15 & 6.9113e-15 & 2.2700e-13 & 2.2715e-13 & 2.2639e-13 \\
    ($10^{-7}$,$1-10^{-7}$) & IT & 13 & 17 & 11 & 12 & 10 & 8 & 7 & 45005  & 20646 & 10796   \\
              & CPU & 2.2908     & 3.5179     & 3.4528     & 3.4504     & 3.3342     & 3.5883     & 5.4163     & 24.4269    & 11.0671 & 5.8570  \\
              & RES & 5.0434e-15 & 1.0133e-13 & 1.6506e-13 & 4.1570e-14 & 6.4189e-14 & 3.3776e-14 & 4.4168e-14 & 2.2772e-13 & 2.2680e-13 & 2.2711e-13 \\
    ($10^{-8}$,$1-10^{-8}$) & IT & 16 & 18 & 12 & 13 & 11 & 9 & 7 & 119320  & 55314 & 29155   \\
              & CPU & 2.9754     & 3.6582     & 3.6639     & 3.5681     & 3.6928     & 4.0579     & 5.4164     & 64.6936    & 29.9190 & 16.0651  \\
              & RES & 5.6527e-15 & 2.0625e-14 & 1.8425e-13 & 1.6866e-13 & 8.9373e-14 & 1.5415e-13 & 7.6693e-14 & 2.2733e-13 & 2.2733e-13 & 2.2718e-13 \\
    \bottomrule
  \end{tabular}
\end{sidewaystable}

\begin{sidewaystable}
  \caption{Numerical results for $n = 2048$}
  \label{tab:n2048}
  \begin{tabular}{cccccccccccc}
    \toprule
    $(\alpha,c)$ & Item & TSMNM & NM & TSNM1 & TSNM2 & NSM(3) & NSM(5) & NSM(10) & FPI &  NBJ & NBGS  \\
    \midrule
    (0.9,0.1) & IT  & 3           & 4          & 3          & 3           & 3          & 3            & 2          & 8          & 7          & 5 \\
              & CPU & 3.0558      & 4.8500     & 5.6766     & 4.8513      & 5.4003     & 7.3064       & 7.8967     & 0.1560     & 0.1512     & 0.1449  \\
              & RES & 9.1856e-15  & 6.9986e-15 & 2.6245e-15 & 6.5612e-15  & 6.7799e-15 & 8.3108e-15   & 5.6863e-15 & 3.8689e-13 & 6.2987e-14 & 4.4191e-16  \\
    (0.7,0.3) & IT  & 4           & 5          & 4          & 4           & 3          & 3            & 3          & 14         & 11         & 7 \\
              & CPU & 4.1650      & 6.0788     & 7.2487     & 6.5112     & 5.3977     & 7.2580       & 11.7011     & 0.1665     & 0.1549     & 0.1515  \\
              & RES & 6.4780e-15  & 7.6926e-15 & 2.4292e-15 & 7.2877e-15  & 6.6804e-15 & 7.6926e-15   & 8.9072e-15 & 1.3907e-13 & 3.1175e-14 & 1.0616e-15 \\
    (0.3,0.7) & IT  & 4           & 6          & 4          & 5           & 4          & 3            & 3          & 33         & 21         & 12 \\
              & CPU & 4.7678      & 7.1545     & 7.4123     & 8.0721      & 7.1510     & 7.2443       & 11.7533     & 0.2217     & 0.1874     & 0.1613 \\
              & RES & 1.2187e-13  & 7.3306e-15 & 3.0544e-15 & 8.7051e-15  & 6.5670e-15 & 1.0538e-14   & 8.2469e-15 & 2.8421e-13 & 2.0938e-13 & 4.9472e-14 \\
    (0.1,0.9) & IT  & 5           & 7          & 5          & 5           & 4          & 4            & 3          & 69         & 38         & 21  \\
              & CPU & 5.9817      & 8.3425     & 9.3229     & 8.1214      & 7.1530     & 9.4888       & 11.8931     & 0.3267     & 0.2383     & 0.1873 \\
              & RES & 7.9192e-15  & 8.6284e-15 & 3.5459e-15 & 8.8648e-15  & 2.2375e-13 & 1.0047e-14   & 9.5740e-15 & 4.3260e-13 & 4.2847e-13 & 1.2863e-13  \\
    ($10^{-3}$,$1-10^{-3}$) & IT & 8 & 10      & 7          & 7           & 6          & 5            & 4          & 705        & 325        & 168 \\
              & CPU & 9.4888      & 11.8904     & 12.9433     & 11.2598   & 10.6996   & 11.8986       & 15.6142     & 2.1866     & 1.0848     & 0.6233  \\
              & RES & 1.2563e-14  & 1.7073e-14 & 8.5367e-15 & 3.3067e-13  & 2.4966e-14 & 1.5785e-14   & 1.8684e-14 & 4.5325e-13 & 4.5062e-13 & 4.4804e-13  \\
    ($10^{-5}$,$1-10^{-5}$) & IT & 11 & 13     & 9          & 10          & 8          & 7            & 6          & 5725       & 2604       & 1350 \\
              & CPU & 12.6250     & 15.6192     & 16.6738     & 15.8981   & 14.2586    & 16.6978       & 23.4670      & 16.8142    & 7.7762     & 4.0582  \\
              & RES & 1.6586e-14  & 8.6769e-14 & 1.1364e-14 & 1.6125e-14  & 1.7661e-14 & 1.7046e-14   & 3.3018e-14  & 4.5426e-13 & 4.5411e-13 & 4.5258e-13  \\
    ($10^{-7}$,$1-10^{-7}$) & IT & 13 & 17     & 11         & 12          & 10         & 8            & 7          & 42817       & 19705      & 10327 \\
              & CPU & 14.8090     & 20.1359      & 20.1903     & 21.3254  & 17.7938     & 19.1700       & 27.4882     & 125.8284   & 57.6355    & 30.4087   \\
              & RES & 7.6410e-14 & 2.3580e-13  & 1.7345e-13 & 5.0430e-14  & 1.0667e-13 & 8.3439e-14   & 1.7360e-13 & 4.5448e-13 & 4.5433e-13 & 4.5403e-13  \\
    ($10^{-8}$,$1-10^{-8}$) & IT & 14 & 18     & 12         & 13          & 11         & 9            & 7          & 112384     & 52336      & 27666 \\
              & CPU & 14.9636    & 21.8056     & 21.8460    & 20.5552     & 20.8220    & 28.4572       & 28.4572   & 356.6351   & 155.5863    & 82.7813   \\
              & RES & 1.1610e-14 & 1.7644e-13  & 2.4243e-13 & 1.9248e-13  & 2.8933e-13 & 3.2737e-13   & 4.0772e-13 & 4.5462e-13 & 4.5386e-13 & 4.5401e-13  \\
    \bottomrule
  \end{tabular}
\end{sidewaystable}

\begin{sidewaystable}
  \caption{Numerical results for $n = 4096$}
  \label{tab:n4096}
  \begin{tabular}{cccccccccccc}
    \toprule
    $(\alpha,c)$ & Item & TSMNM & NM & TSNM1 & TSNM2 & NSM(3) & NSM(5) & NSM(10) &  FPI & NBJ & NBGS  \\
    \midrule
    (0.9,0.1)    & IT   & 3           & 4          & 3           & 3          & 3          & 3          & 2          &  8         & 7          & 5 \\
                 & CPU  & 25.1027     & 31.5777    & 37.6058     & 31.4310    & 34.6456    & 45.7067    & 48.3898    &  0.7430    & 0.7282     & 0.7068 \\
                 & RES  & 1.0279e-14  & 1.1373e-14 & 3.9367e-15  & 1.1154e-14 & 1.0279e-14 & 7.8734e-15 & 6.9986e-15 & 3.8711e-13 & 6.2768e-14 & 4.4191e-16  \\
    (0.7,0.3)    & IT   & 4           & 5          & 4           &  4         & 3          & 3          & 3          & 14         & 11         & 6  \\
                 & CPU  & 34.4989     & 39.4826    & 49.4020     & 41.3927    & 34.6917    & 46.5338    & 72.6999    & 0.8155     & 0.7762     & 0.7190  \\
                 & RES  & 1.1336e-14  & 1.0527e-14 & 4.2511e-15  & 1.1944e-14 & 1.0122e-14 & 1.1134e-14 & 1.4778e-14 &  1.3887e-13 & 3.1378e-14 & 4.7410e-13  \\
      (0.3,0.7)  & IT   & 4           & 6          & 4           &  4         & 4          & 3          & 3          & 32         & 21         & 12  \\
                 & CPU  & 34.6231     & 46.7633    & 49.6253     & 41.6914    & 46.3811    & 45.4788    & 72.5778    & 1.0199     & 0.9175     & 0.7905 \\
                 & RES  & 1.2279e-13  & 1.2065e-14 & 4.2761e-15  & 5.7850e-13 & 9.6213e-15 & 1.4356e-14 & 1.0385e-14 &  6.9212e-13 & 2.1060e-13 & 5.0475e-14 \\
      (0.1,0.9)  & IT   & 5           & 7          & 5           &  5         & 4          & 4          & 3          & 68         & 37         & 20 \\
                 & CPU  & 41.6221     & 55.1117    & 62.1316     & 51.8485    & 46.0889    & 61.0542    & 73.0014    & 1.4464     & 1.0898     & 0.8889  \\
                 & RES  & 1.2174e-14  & 1.5956e-14 & 6.2643e-15  & 1.4065e-14 & 2.2906e-13 & 1.3120e-14 & 1.2529e-14 &  6.4263e-13 & 8.8138e-13 & 5.7073e-13 \\
    ($10^{-3}$,$1-10^{-3}$) & IT & 8  & 10         & 7           &  7         & 6          & 5          & 4          & 684        & 316        & 164  \\
                 & CPU  & 63.1484     & 78.1408    & 86.5390     & 72.5856    & 68.9883    & 75.3689    & 96.7285    & 8.7052     & 4.4138     & 2.6127  \\
                 & RES  & 1.5462e-14  & 2.2066e-14 & 1.2402e-14  & 3.3437e-13 & 2.3999e-14 & 1.8200e-14 & 2.2871e-14 & 8.8972e-13 & 8.6250e-13 & 8.0367e-13 \\
    ($10^{-5}$,$1-10^{-5}$) & IT & 10 & 13         & 9           &  10        & 8          & 7          & 6          & 5507       & 2508       & 1303  \\
                 & CPU  & 77.2035     & 101.8549   & 111.6842    & 103.4050   & 92.0644    & 105.7777   & 146.5145   &  65.7542   & 30.6062    & 16.1430 \\
                 & RES  & 7.6661e-13  & 1.0243e-13 & 1.5510e-14  & 2.2113e-14 & 1.9196e-14 & 2.3188e-14 & 2.7488e-14 & 9.0743e-13  & 9.0660e-13 & 8.9867e-13 \\
    ($10^{-7}$,$1-10^{-7}$) & IT & 13 & 17         & 11          &  12        & 10         & 8          & 7          & 40627      & 18753      & 9852 \\
                 & CPU  & 97.4787     & 132.4134   & 135.7965    & 123.6346   & 114.6638   & 120.5447   & 169.1866   & 482.0787   & 225.6549   & 117.8627 \\
                 & RES  & 1.8750e-13  & 5.5471e-14 & 4.7372e-14  & 3.3772e-14 & 1.2775e-13 & 5.2873e-14 & 2.1806e-13 &  9.0924e-13 & 9.0786e-13 & 9.0649e-13 \\
    ($10^{-8}$,$1-10^{-8}$) & IT & 14 & 18         & 12          &  13        & 11         & 9          & 7          & 105468     & 49331      & 26165 \\
                 & CPU  & 106.4377    & 144.2261   & 152.6316    & 140.1546   & 131.1754   & 141.9870   & 178.5167   & 1276.4307   & 601.3953   & 317.9439  \\
                 & RES  & 9.8069e-14  & 2.9497e-13 & 3.0093e-14  & 3.8051e-13 & 7.1184e-14 & 3.0597e-13 & 2.7252e-13 &  9.0905e-13 & 9.0920e-13 & 9.0737e-13 \\
    \bottomrule
  \end{tabular}
\end{sidewaystable}

\begin{sidewaystable}
  \caption{Numerical results for $n = 8192$}
  \label{tab:n8192}
  \begin{tabular}{cccccccccccc}
    \toprule
    $(\alpha,c)$ & Item & TSMNM & NM & TSNM1 & TSNM2 & NSM(3) & NSM(5) & NSM(10) & FPI & NBJ & NBGS \\
    \midrule
    (0.9,0.1)    & IT   & 3          & 4          & 3          & 3          & 3          & 3          & 2          & 8           & 7          & 5  \\
                 & CPU  & 180.6637   & 212.5825   & 263.0222   & 206.9046   & 221.1345   & 282.6704   & 293.8701   & 2.9567      & 2.9104     & 2.8348  \\
                 & RES  & 1.6622e-14 & 1.4435e-14 & 5.0302e-15 & 1.7715e-14 & 1.4653e-14 & 8.9669e-15 & 1.1591e-14 & 3.8711e-13  & 6.2768e-14 & 4.4191e-16 \\
    (0.7,0.3)    & IT   & 4          & 5          & 4          & 4          & 3          & 3          & 3          & 13          & 10         & 6 \\
                 & CPU  & 232.0622   & 263.4752   & 350.2669   & 275.3082   & 220.5804   & 283.3959   & 441.7767   & 3.1890      & 3.0415     & 2.8649  \\
                 & RES  & 1.5385e-14 & 1.5385e-14 & 5.0609e-15 & 1.6195e-14 & 1.1695e-14 & 1.6195e-14 & 1.6397e-14 & 1.3871e-12  & 9.4132e-13 & 4.7432e-13  \\
      (0.3,0.7)  & IT   & 4          & 6          & 4          & 4          & 4          & 3          & 3          & 31          & 20         & 11  \\
                 & CPU  & 232.5082   & 316.1660   & 349.4747   & 280.4237   & 294.6426   & 283.1361   & 434.7072   & 4.0063      & 3.5101     & 3.1087  \\
                 & RES  & 1.2401e-13 & 1.6035e-14 & 6.2614e-15 & 5.7895e-13 & 1.6341e-14 & 1.9242e-14 & 1.6952e-14 & 1.6774e-12  & 9.8701e-13 & 9.1456e-13 \\
      (0.1,0.9)  & IT   & 5          & 7          & 5          & 5          & 4          & 4          & 3          & 66          & 37         & 20  \\
                 & CPU  & 281.4508   & 382.3518   & 436.1224   & 343.9620   & 294.7776   & 376.6070   & 440.2856   & 5.6905      & 4.3283     & 3.5192 \\
                 & RES  & 2.0093e-14 & 1.7847e-14 & 8.2736e-15 & 1.9502e-14 & 2.3296e-13 & 1.9502e-14 & 2.0448e-14 & 1.4162e-12  & 8.8421e-13 & 5.7260e-13  \\
    ($10^{-3}$,$1-10^{-3}$) & IT & 8 & 10         & 7          & 7          & 6          & 5          & 4          & 662         & 306        & 159 \\
                 & CPU  & 435.2637   & 540.0512   & 609.5896   & 514.7841   & 440.8923   & 470.5313   & 578.9107   & 33.8761     & 16.9705    & 10.0798 \\
                 & RES  & 2.1582e-14 & 3.0118e-14 & 1.6106e-14 & 3.4950e-13 & 4.0910e-14 & 3.0953e-14 & 3.7044e-14 & 1.8058e-12  & 1.7838e-12 & 1.6694e-12 \\
    ($10^{-5}$,$1-10^{-5}$) & IT & 10  & 13       &  9         & 10         & 8          & 7          & 6          & 5288        & 2413       & 1255  \\
                & CPU  & 530.8189    & 699.4390   & 784.3909   & 705.7387   & 587.2113   & 660.7858   & 862.007    & 252.4722    & 116.3761   & 62.8775 \\
                & RES  & 7.6966e-13  & 6.5878e-14 & 1.9656e-14 & 3.9773e-14 & 3.7008e-14 & 4.1769e-14 & 3.6394e-14 & 1.8160e-12  & 1.8162e-12 & 1.8016e-12 \\
    ($10^{-7}$,$1-10^{-7}$) & IT & 13 & 16        &  11        & 12         & 10         & 8          & 7          & 38433       & 17804      & 9376 \\
                & CPU  & 669.3444    & 858.4441   & 957.3584   & 813.9931   & 733.4424   & 752.0414   & 1014.3375  & 1821.7865   & 842.279    & 445.4703 \\
                & RES  & 2.7200e-14  & 1.2129e-12 & 8.7713e-14 & 1.1980e-13 & 1.8551e-13 & 1.0513e-13 & 1.5021e-13 & 1.8187e-12  & 1.8149e-12 & 1.8189e-12 \\
    ($10^{-8}$,$1-10^{-8}$) & IT & 14 & 18        &  12        & 13         & 11         & 9          & 7          & 98540       & 46324      & 24660 \\
                & CPU  & 756.4959    & 1003.1779  & 1085.9651  & 942.6702   & 836.6878   & 888.0454   & 1136.2941  & 4687.9974   & 2191.4931  & 1172.7650 \\
                & RES  & 6.5256e-13  & 7.1946e-14 & 6.0765e-13 & 5.7282e-14 & 2.8992e-13 & 8.8444e-14 & 1.0311e-13 & 1.8188e-12  & 1.8174e-12 & 1.8178e-12 \\
    \bottomrule
  \end{tabular}
\end{sidewaystable}

\begin{figure}
  \centering
  \subfigure{\includegraphics[width=0.42\textwidth]{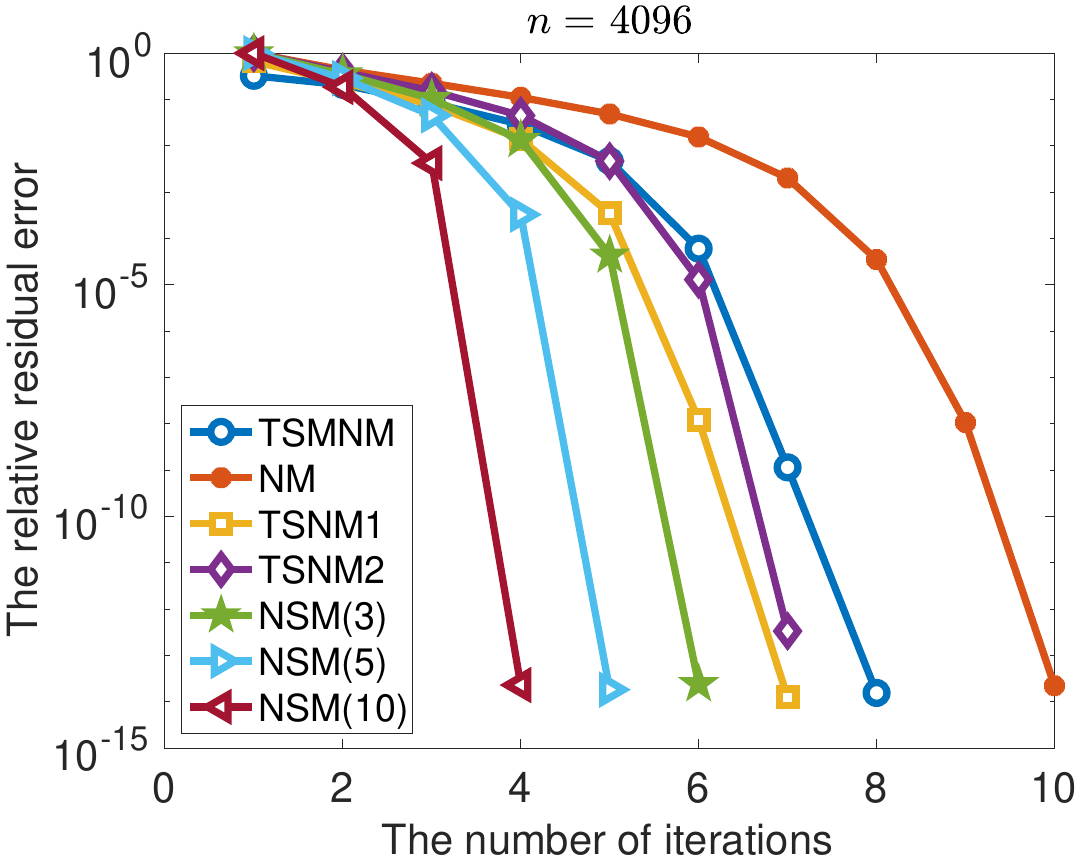}}\quad
  \subfigure{\includegraphics[width=0.42\textwidth]{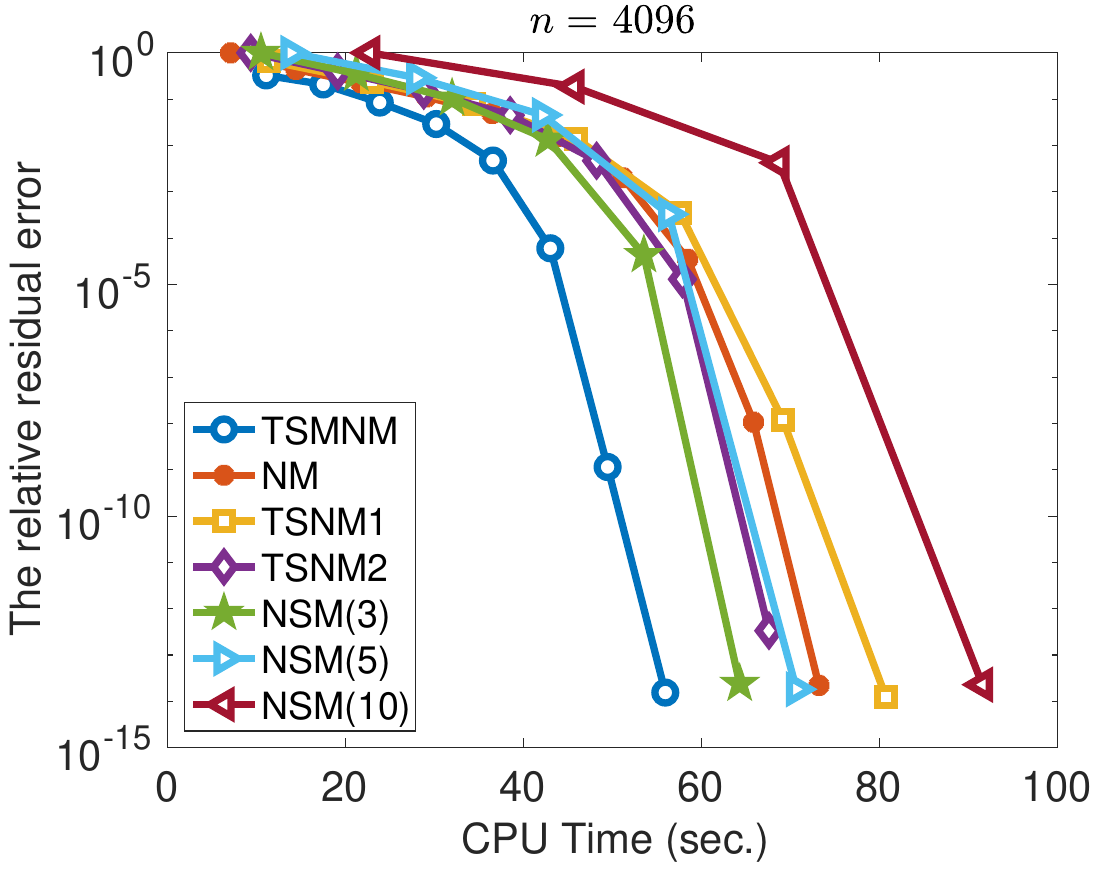}} \\
  \subfigure{\includegraphics[width=0.42\textwidth]{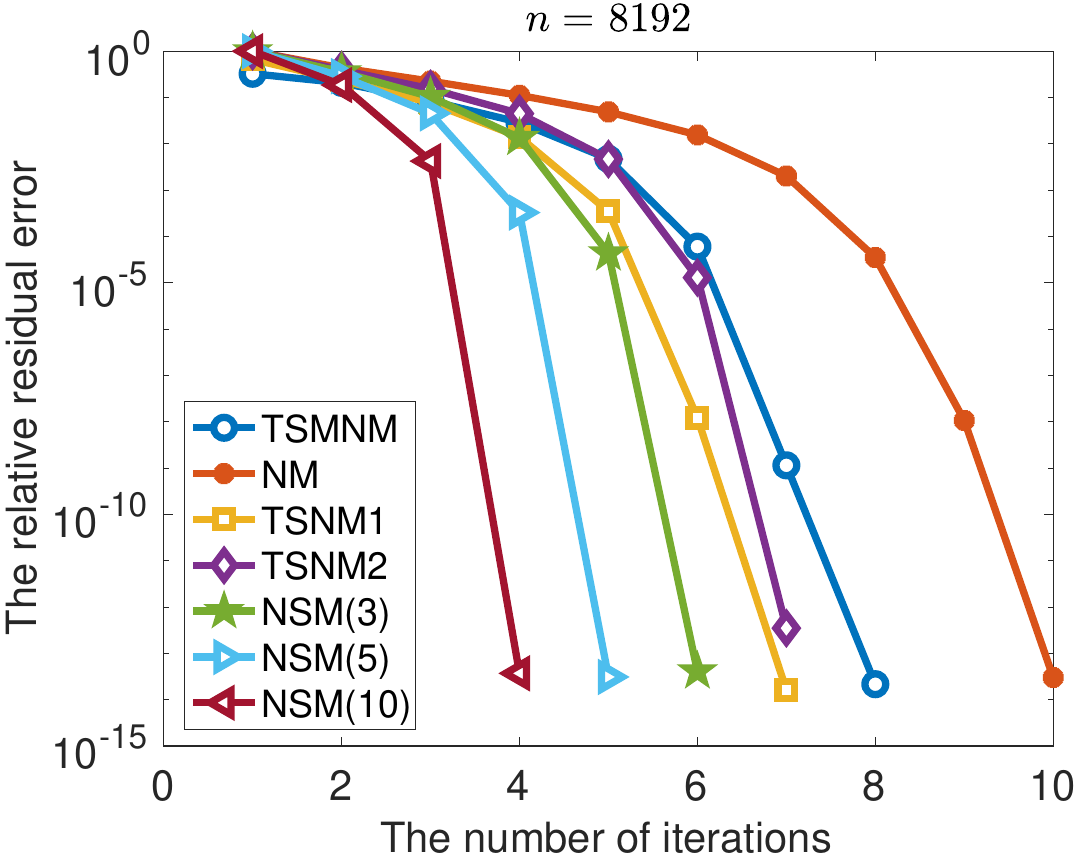}}\quad
  \subfigure{\includegraphics[width=0.42\textwidth]{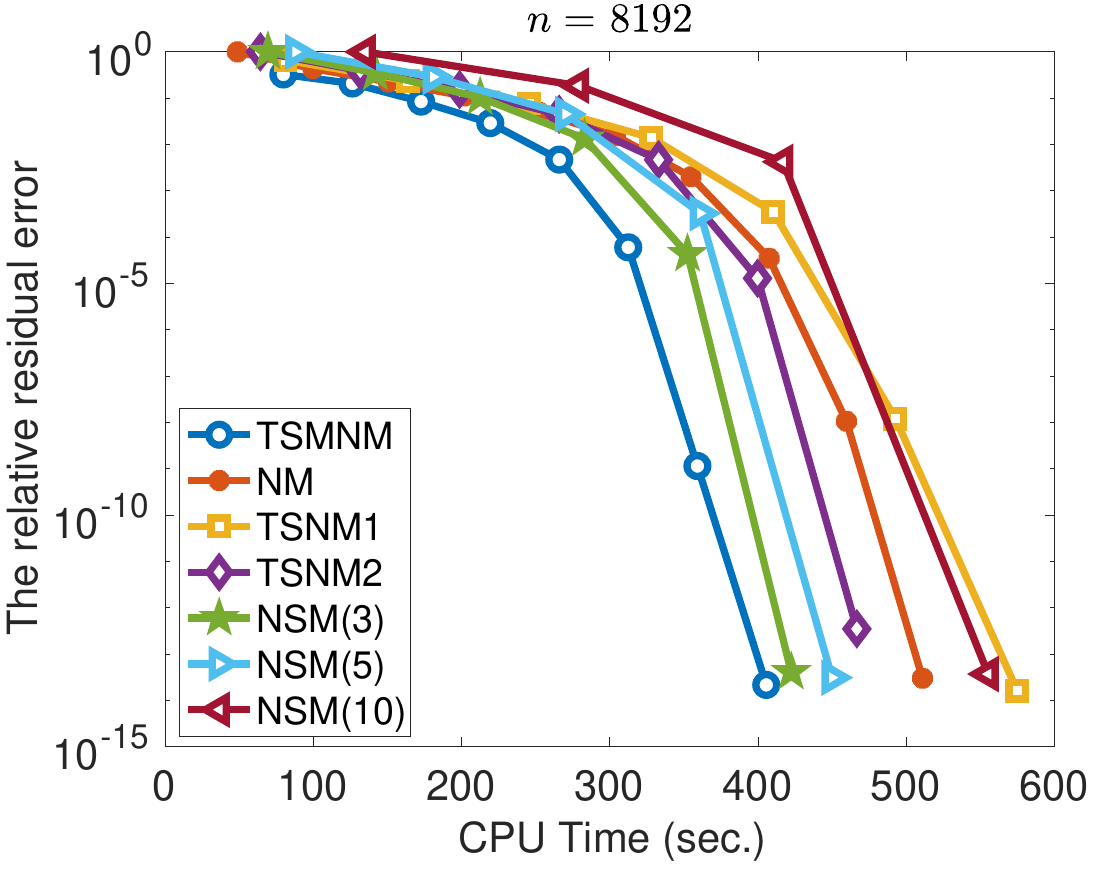}} \\
  \caption{Left: Iterations. Right: Time. Comparison of TSMNM with other Newton-type methods for $(\alpha, c) = (10^{-3},1-10^{-3})$ when the problem size $n = 4096, 8192$, respectively.}
  \label{fig:IterHistory_alphaC10-3}
\end{figure}

\begin{figure}
  \centering
  \subfigure{\includegraphics[width=0.42\textwidth]{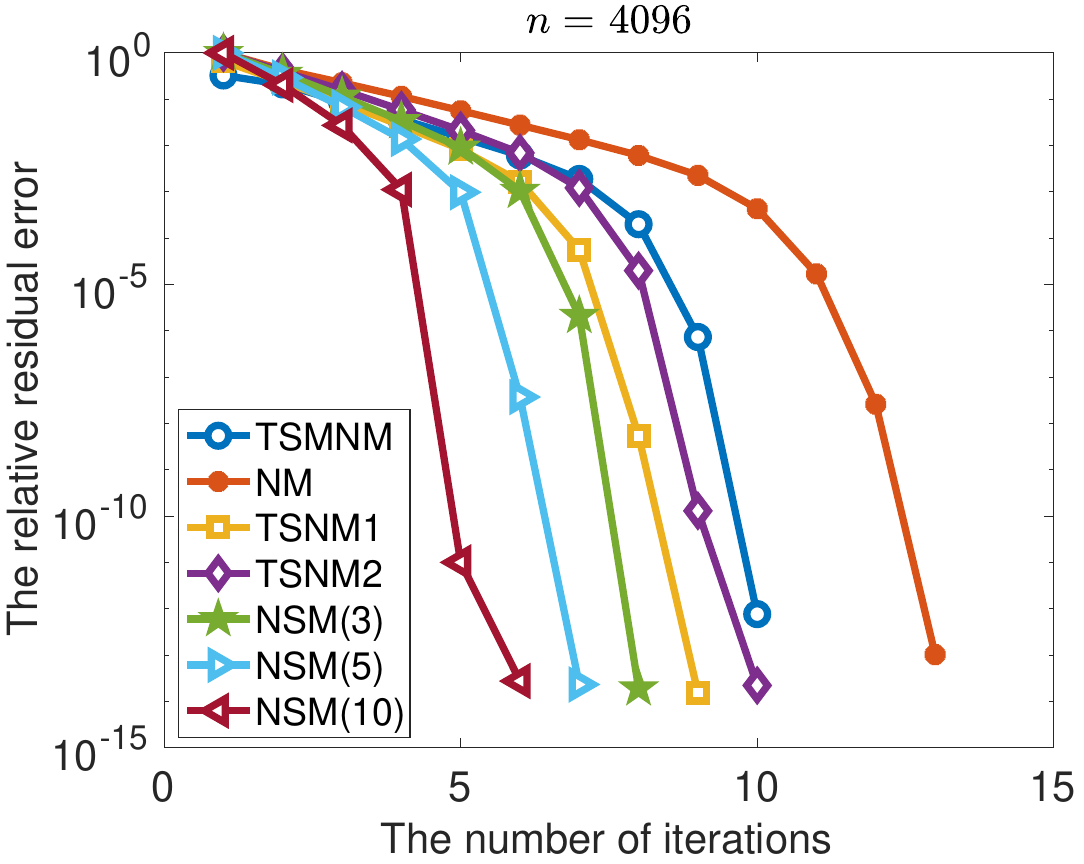}}\quad
  \subfigure{\includegraphics[width=0.42\textwidth]{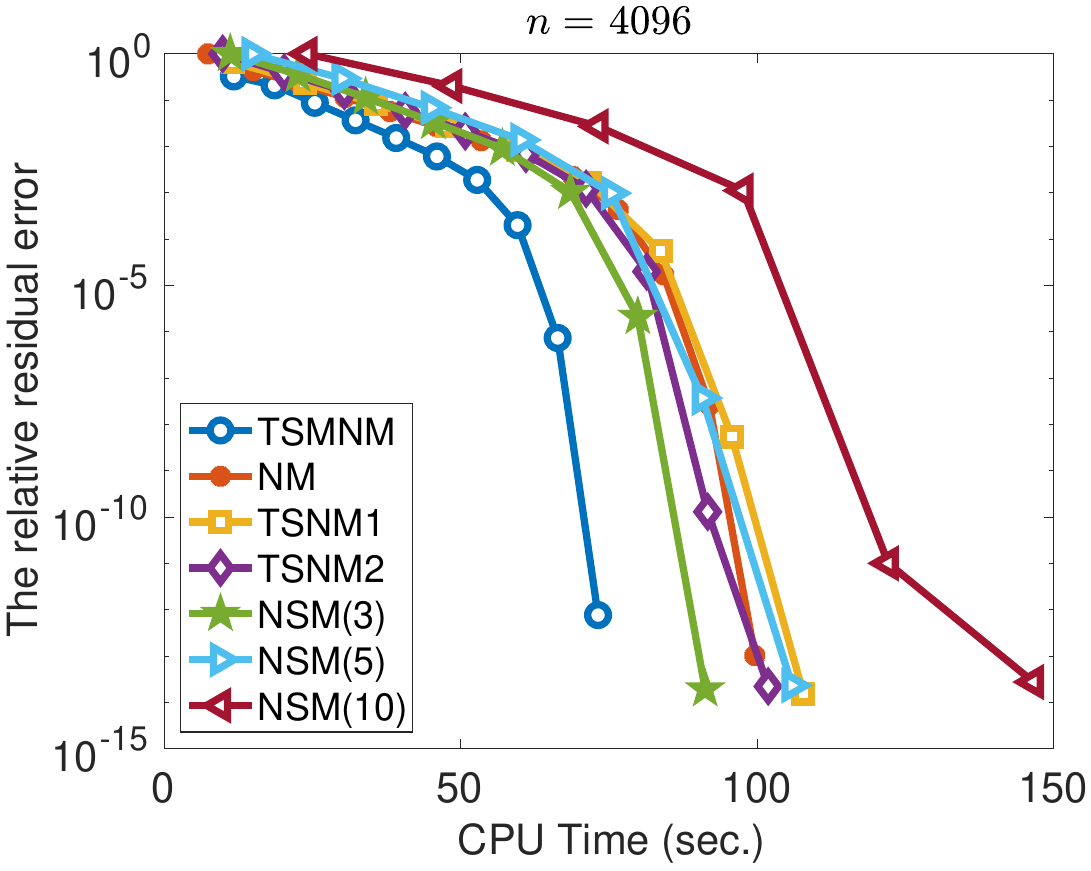}} \\
  \subfigure{\includegraphics[width=0.42\textwidth]{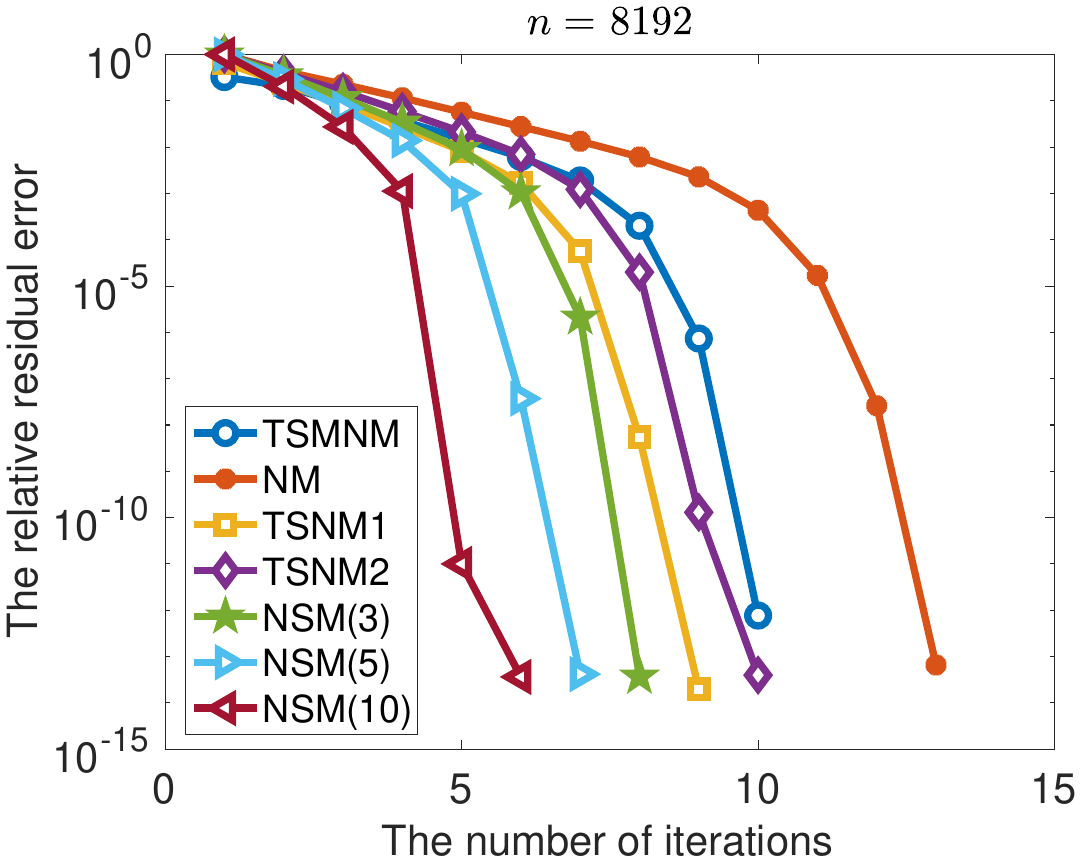}}\quad
  \subfigure{\includegraphics[width=0.42\textwidth]{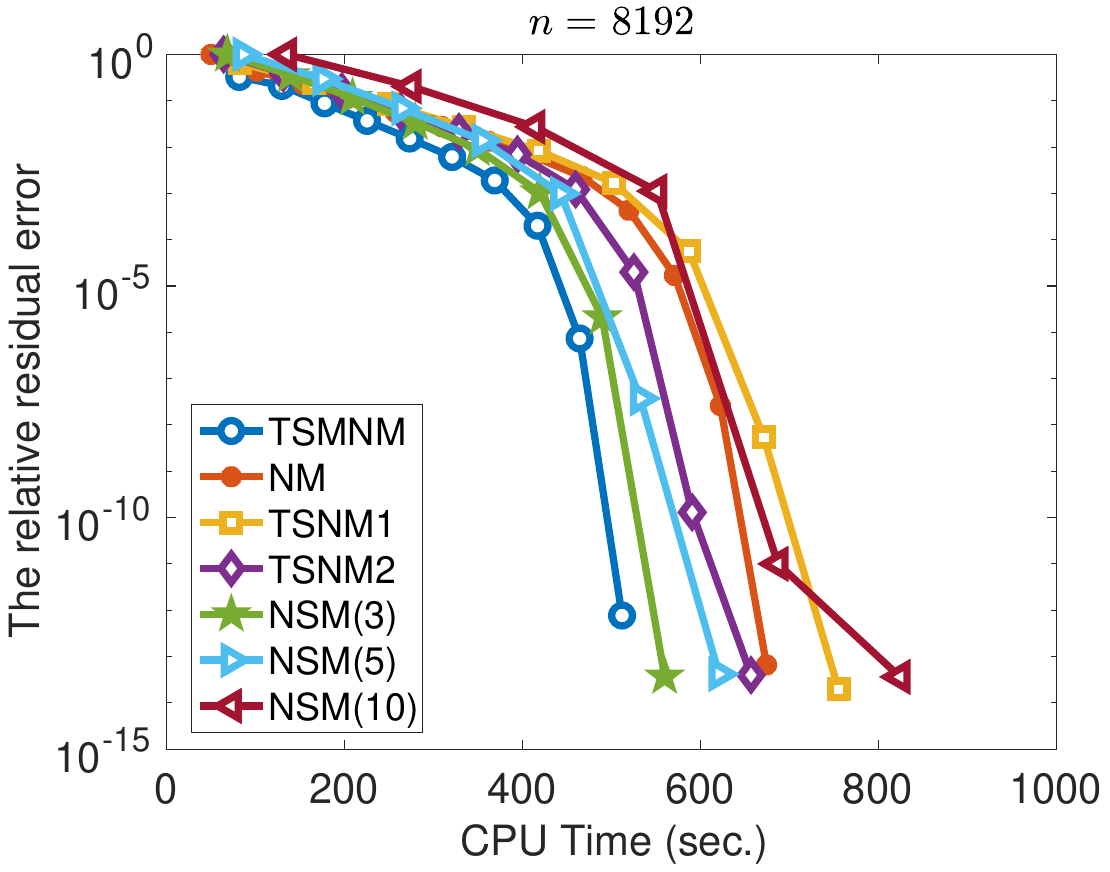}} \\
  \caption{Left: Iterations. Right: Time. Comparison of TSMNM with other Newton-type methods for $(\alpha, c) = (10^{-5},1-10^{-5})$ when the problem size $n = 4096, 8192$, respectively.}
  \label{fig:IterHistory_alphaC10-5}
\end{figure}

\begin{figure}
  \centering
  \subfigure{\includegraphics[width=0.42\textwidth]{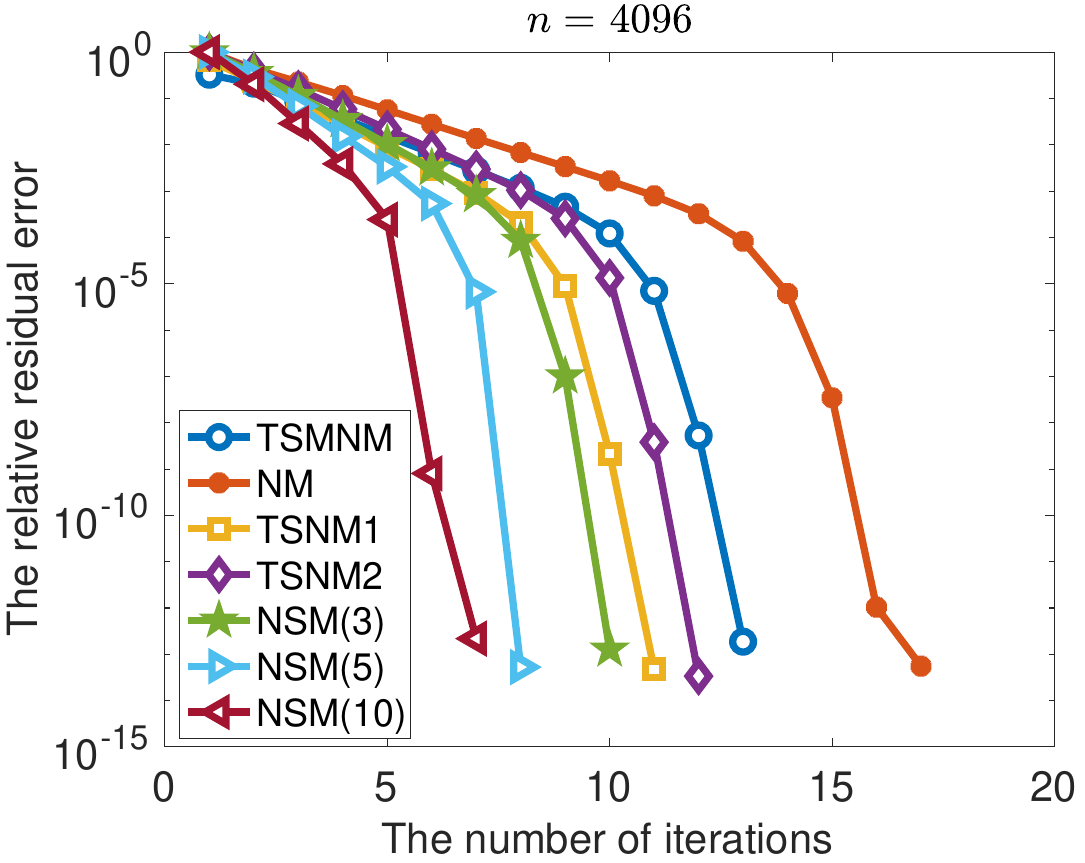}}\quad
  \subfigure{\includegraphics[width=0.42\textwidth]{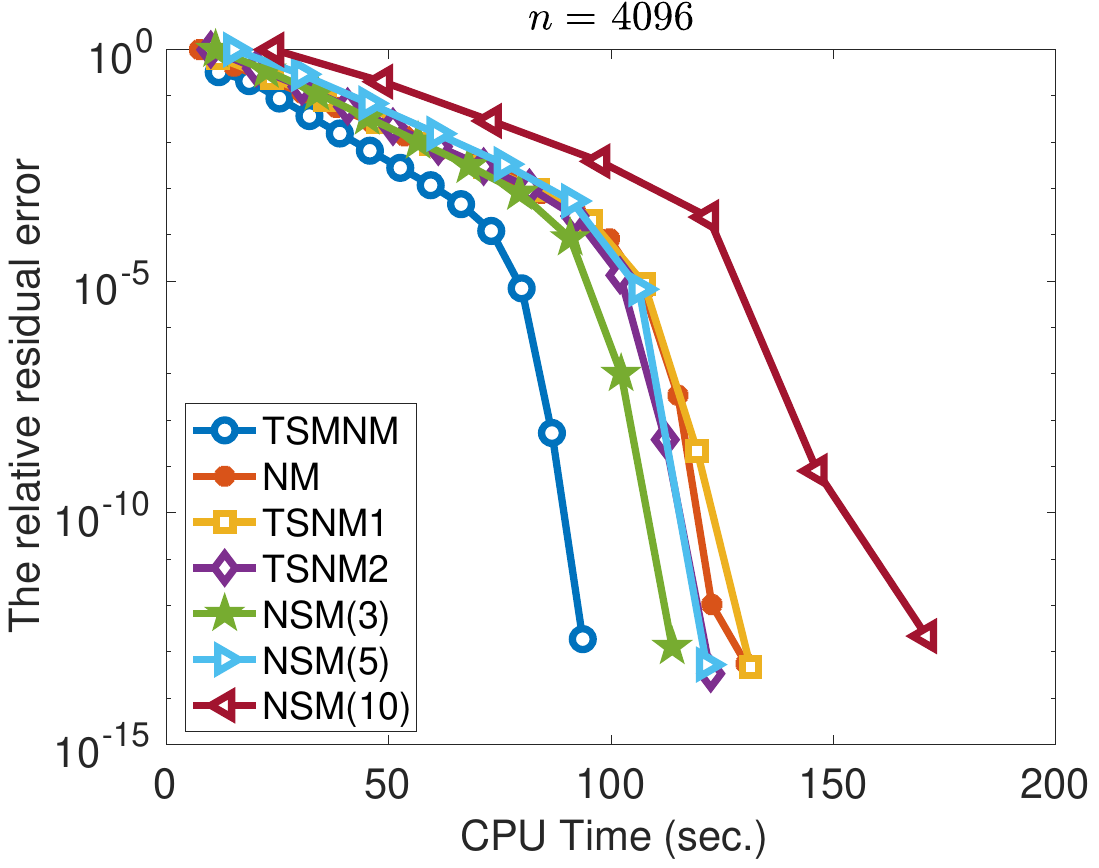}} \\
  \subfigure{\includegraphics[width=0.42\textwidth]{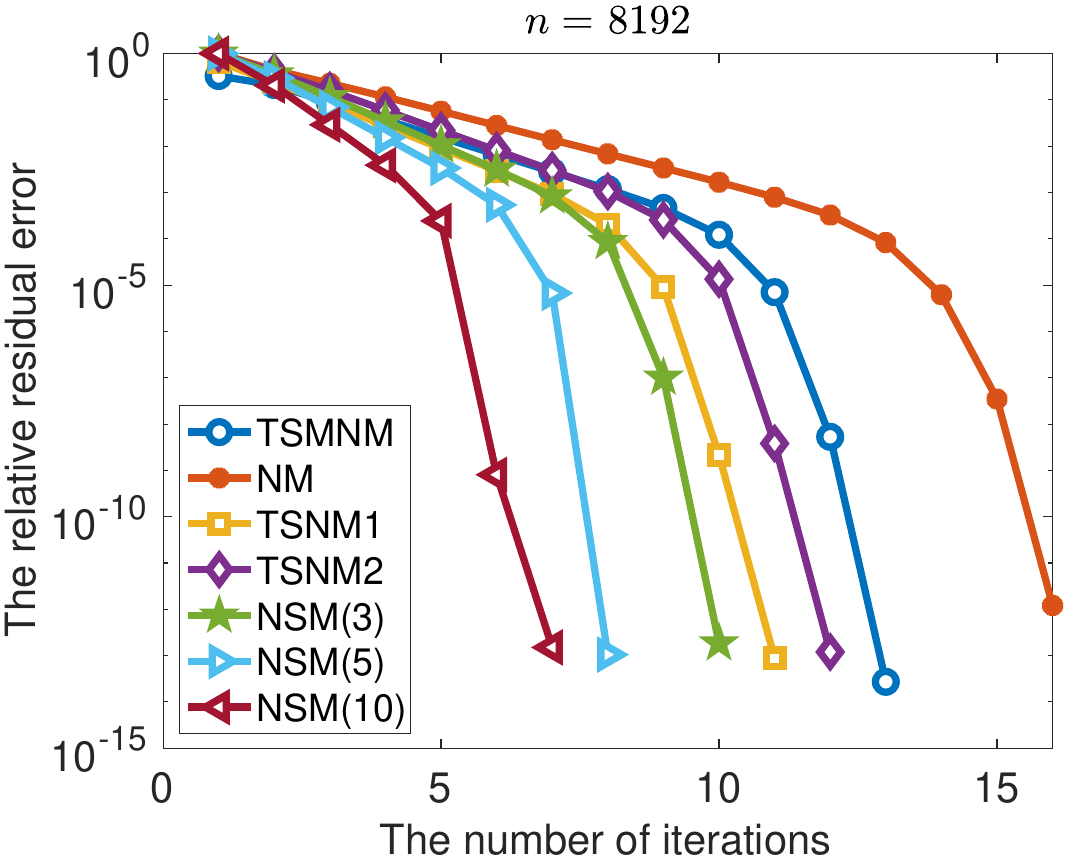}}\quad
  \subfigure{\includegraphics[width=0.42\textwidth]{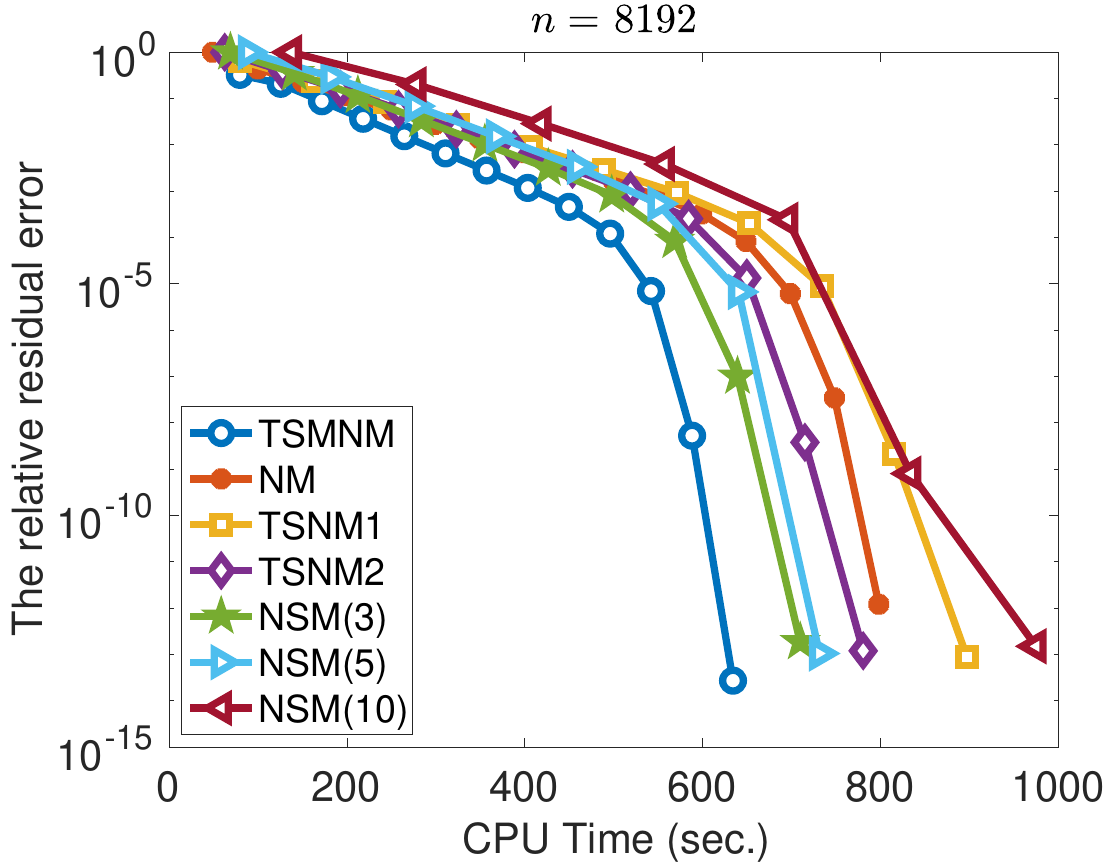}} \\
  \caption{Left: Iterations. Right: Time. Comparison of TSMNM with other Newton-type methods for $(\alpha, c) = (10^{-7},1-10^{-7})$ when the problem size $n = 4096, 8192$, respectively.}
  \label{fig:IterHistory_alphaC10-7}
\end{figure}

In conclusion, TSMNM does not require heavy computation and is more advantageous in execution time, 
especially for nearly singular and large-scale problems.
Although some Newton-type methods outperform TSMNM in terms of the number of iterations, 
TSMNM offers a balance between convergence rate, the desired accuracy and execution time, 
which makes it an effective choice for solving large-scale and nearly singular problems with limited computational resources.

\section{Conclusions}
\label{sec:Conclusions}

In this paper, we studied a two-step modified Newton method for solving 
a nonsymmetric algebraic Riccati equation arising from transport theory.
We first performed a monotone convergence analysis for the proposed method,
obtaining sufficient conditions for convergence.
We then obtained a convergence rate result for the nonsingular case, 
i.e., $\alpha \neq 0$ or $c \neq 1$.
For the singular case $\alpha = 0$ and $c = 1$, 
we presented detailed convergence analysis and error bounds for two types of singular problems.
The numerical experiments demonstrated that the proposed method is competitive with existing methods,
especially for nearly singular and large-scale problems.

\section*{Acknowledgments}

This work was supported by the Fujian Province Natural Science Foundation of China (Grant No. 2022J01896), 
the Education Research Projects for Young Teachers of Fujian Provincial Education Department (Grant No. JAT220197), 
Fujian Key Laboratory of Granular Computing and Applications, 
and Fujian Key Laboratory of Data Science and Statistics.

\section*{Statements and Declarations}

\textbf{Data availability}
The data that support the findings of this study are available from the corresponding author upon reasonable request. \newline

\noindent\textbf{Conflict of interest} 
The authors declare that they have no conflict of interest.


\end{document}